\documentclass[11pt]{article}
\usepackage{amsmath,amsfonts,amssymb,amsthm,latexsym,mathtools}
\usepackage{graphicx,setspace}
\usepackage{xcolor}
\usepackage[title]{appendix}
\usepackage[numbers,sort&compress]{natbib}
\usepackage{enumerate,booktabs}

\usepackage[colorlinks,linkcolor=red,anchorcolor=gray,citecolor=red,urlcolor=blue]{hyperref}
\usepackage{caption}

\RequirePackage[capitalize,nameinlink]{cleveref}

\crefname{section}{section}{sections}
\crefname{subsection}{subsection}{subsections}
\Crefname{section}{Section}{Sections}
\Crefname{subsection}{Subsection}{Subsections}

\crefname{condition}{Condition}{Conditions}

\Crefname{figure}{Figure}{Figures}

\AtBeginDocument{
  \addtolength{\abovedisplayskip}{-3pt}
  \addtolength{\belowdisplayskip}{-3pt}
  \addtolength{\belowdisplayshortskip}{-3pt}
}

\crefformat{equation}{\textup{#2(#1)#3}}
\crefrangeformat{equation}{\textup{#3(#1)#4--#5(#2)#6}}
\crefmultiformat{equation}{\textup{#2(#1)#3}}{ and \textup{#2(#1)#3}}
{, \textup{#2(#1)#3}}{ and \textup{#2(#1)#3}}
\crefrangemultiformat{equation}{\textup{#3(#1)#4--#5(#2)#6}}%
{ and \textup{#3(#1)#4--#5(#2)#6}}{, \textup{#3(#1)#4--#5(#2)#6}}{ and \textup{#3(#1)#4--#5(#2)#6}}

\Crefformat{equation}{#2Equation~\textup{(#1)}#3}
\Crefrangeformat{equation}{Equations~\textup{#3(#1)#4--#5(#2)#6}}
\Crefmultiformat{equation}{Equations~\textup{#2(#1)#3}}{ and \textup{#2(#1)#3}}
{, \textup{#2(#1)#3}}{ and \textup{#2(#1)#3}}
\Crefrangemultiformat{equation}{Equations~\textup{#3(#1)#4--#5(#2)#6}}%
{ and \textup{#3(#1)#4--#5(#2)#6}}{, \textup{#3(#1)#4--#5(#2)#6}}{ and \textup{#3(#1)#4--#5(#2)#6}}

\crefdefaultlabelformat{#2\textup{#1}#3}

\newtheorem {theorem}{Theorem}[section]
\newtheorem {lemma}{{\bf Lemma}}[section]
\newtheorem {proposition}{{\bf Proposition}}[section]
\theoremstyle{remark}
\newtheorem {remark}{{\bf Remark}}[section]

\theoremstyle{definition}
\newtheorem {definition}{{\bf Definition}}[section]
\newtheorem {example}{{\bf Example}}[section]
\theoremstyle{plain} \numberwithin {equation}{section}
\newtheorem{assumption}{Assumption}

\numberwithin{assumption}{section}

\allowdisplaybreaks
\everymath{\displaystyle}
\newcommand{\R}{\mathbb R}

\newcommand{\T}{\mathbb T}
\newcommand{\dd}{\,\mathrm d}
\newcommand{\pa}{\partial}
\newcommand{\Om}{\Omega}
\newcommand{\Ga}{\Gamma}
\newcommand{\norm}[1]{\lVert#1\rVert}

\newcommand{\Id}{I}
\newcommand{\Es}{\mathcal E_s}

\newcommand{\diag}{\operatorname{diag}}

\hypersetup{pdftitle={Boundary stabilization of multi-dimensional quasilinear hyperbolic systems},pdfauthor={Haitian Yang}}
\title{\bfseries Boundary stabilization of multi-dimensional first-order quasilinear hyperbolic systems}
\author{Haitian Yang}
\date{}
\begin{document}
\maketitle
\begin{abstract}
\setlength{\emergencystretch}{2em}
We study the long-time behaviour of
multi-dimensional first-order symmetrizable quasilinear hyperbolic systems
with nonlinear feedback boundary conditions. Assuming strict interior and boundary
Lyapunov inequalities for the linearized system and non-characteristic
boundaries, we prove global existence for small initial data  and exponential $H^s$ decay, with $s>d/2+1$  an integer.  We consider both bounded smooth domains and flat periodic cylinders,
and illustrate the results with physical examples and numerical simulations.

\end{abstract}

\noindent{\bfseries 2020 Mathematics Subject Classification}. 93D15, 35L50, 35B35.

\bigskip

\noindent{\bfseries Key Words}. Boundary stabilization; quasilinear hyperbolic
systems; long-time behaviour; nonlinear feedback; elliptic regularity;
Lyapunov functions.

\section{Introduction}\label{sec:introduction}

In this paper, we study the long-time behaviour of the following
multi-dimensional (multi-D) first-order quasilinear hyperbolic system:
\begin{equation}\label{eq:system}
 \begin{cases}
 U_t+\displaystyle\sum_{j=1}^d A_j(U)U_{x_j}=Q(U),
   &(t,x)\in(0,\infty)\times\Om,\\
 \zeta_-=\Phi(x,\zeta_+),
   &(t,x)\in(0,\infty)\times\Gamma,\\
 U(0,x)=U_0(x),&x\in\Om.
 \end{cases}
\end{equation}
Here $U=U(t,x)$ takes values in an open set $G\subset\R^n$ containing
zero. The subscripts $t$ and $x_j$ denote partial derivatives of $U$
with respect to the corresponding variables. The coefficient maps $A_j=A_j(U):G\to\R^{n\times n}$ and
$Q=Q(U):G\to\R^n$ are smooth matrices-valued functions. Hyperbolicity is understood in
the symmetrizable sense of Assumption~\ref{ass:sym}.
We take the spatial domain $\Om$ to be either a bounded $C^\infty$ domain in $\R^d$
or the periodic cylinder $(0,1)\times\T^{d-1}$, where $d\ge2$. We denote its boundary by $\Gamma.$
At each boundary point $x \in \Ga$, $\zeta_+=\zeta_+(t,x) \in \R^{n_+}$ and $\zeta_-=\zeta_-(t,x) \in \R^{n_-}$ represent outgoing
and incoming information, respectively. They are linear combinations
of $U$ at the boundary.  $\Phi(x,\zeta_+)$ is a smooth nonlinear boundary law.
We will explain these variables
and the boundary condition soon.

Our purpose is to study the following question. Suppose that
$0\in\R^n$ is a steady state (also called an equilibrium) of \eqref{eq:system}, i.e., $Q(0)=0$ and $\Phi(x,0)=0$,
and that the linearization of \eqref{eq:system} around $U=0$
admits a strict quadratic $L^2$ Lyapunov functional under
Assumptions~\ref{ass:sym} and~\ref{ass:boundary}. Do sufficiently small solutions of
\eqref{eq:system} exist for all time and decay exponentially in
the higher Sobolev norm $H^s$? In terms of boundary control, the above question can be restated as follows: if
the linearized boundary law determined by $\Phi_{\zeta_+}(x,0)$
stabilizes the linearized system, does the nonlinear law
$\zeta_-=\Phi(x,\zeta_+)$ stabilize \eqref{eq:system}? Here and below, $\Phi_{\zeta_+}$ denotes the Jacobian of $\Phi$ with respect to $\zeta_+$.

A further central hypothesis is that the boundary is
non-characteristic at equilibrium; see
\cite[Definition~4.2]{BGS2007}. Set
$A_j^0:=A_j(0)$ and $Q_0:=Q_U(0)$, where $Q_U$ denotes the Jacobian of $Q(U)$ with respect to $U$. For the outward unit normal vector
$\nu(x)$ at $x\in\Ga$, we write
\begin{equation*}
 A_\nu(x,U):=\sum_{j=1}^d\nu_j(x)A_j(U),\qquad
 A_\nu^0(x):=A_\nu(x,0).
\end{equation*}
\begin{assumption}[Non-characteristic boundary]\label{ass:noncharacteristic}
The matrix $A_\nu^0(x)$ is invertible for every $x\in\Ga$.
\end{assumption}
On the periodic cylinder $\Om=(0,1)\times\T^{d-1}$, this assumption is precisely the
invertibility of $A_1(0)$, since the boundary $\Ga$ consists of only two faces $\Ga_0:=\{0\}\times\T^{d-1}$ and $\Ga_1:=\{1\}\times\T^{d-1}$.

We now define the boundary variables. Symmetrizability (\ref{eq:symmetrizer}) in
Assumption~\ref{ass:sym} implies that $A_\nu^0(x)$ is similar to a symmetric matrix. Non-characteristicity separates the negative and positive spectra of $A_\nu^0(x)$.
Hence, near each boundary point $x$, we can choose a smooth invertible
matrix $\Pi=\Pi(x)$ such that
\begin{equation*}
 \Pi^{-1}A_\nu^0\Pi=
 \begin{pmatrix}\Lambda_-&0\\0&\Lambda_+\end{pmatrix},
\end{equation*}
where $\Lambda_-=\Lambda_-(x)\in\R^{n_-\times n_-}$ and
$\Lambda_+=\Lambda_+(x)\in\R^{n_+\times n_+}$ have strictly negative and positive
eigenvalues of $A_\nu^0(x)$, respectively, and $n_-+n_+=n$. We write
\begin{equation*}
 \zeta=\zeta(t,x)=\begin{pmatrix}\zeta_-\\\zeta_+\end{pmatrix}:=\Pi^{-1}(x)U(t,x).
\end{equation*}
The row blocks of $\Pi^{-1}$ are denoted by $L_\pm$, so that
$\zeta_\pm=L_\pm U$. From a control perspective, the outgoing
variable $\zeta_+$ carries information leaving the domain and
can be measured by sensors on the boundary. The incoming
variable $\zeta_-$ carries information entering the domain and
can be modified by actuators through the boundary input. The
feedback $\zeta_-=\Phi(x,\zeta_+)$ thus makes the incoming
variable depend nonlinearly on the outgoing variable.



Boundary control and stabilization of one-dimensional (1-D) first-order
hyperbolic systems have been extensively studied. The main approaches
include characteristic methods \cite{Li2010}, backstepping methods
\cite{HDVK2016}, and Lyapunov methods \cite{CBDN2007,CBDN2008,BC2016}.
Constructive feedback design is well developed for linear systems,
and nonlinear extensions are also available. In particular,
backstepping yields local exponential stabilization of quasilinear
systems by full-state boundary feedback \cite{CVKB2013,HVDK2019}.
Under suitable strict Lyapunov and boundary dissipation conditions,
one obtains stability in $H^2$ spaces \cite{BC2016,WWY2020,HS2021,GLTW2012}. Moreover,
stability results in the $C^1$ norm can also be established \cite{CB2015,CN2015,PWB2008}.

In recent years, boundary stabilization of multi-D first-order
hyperbolic systems has attracted growing interest
\cite{HTControl2024,YY2024,YY2025,BaiHayat2026}.
However, these works address only linear systems. Extending these results to quasilinear systems requires control
of the full $H^s$ norm, with $s>d/2+1$, to close the nonlinear estimates. However, the boundary condition can
be differentiated in time and in tangential directions, but not in the
normal direction. Consequently, the normal derivatives of $U$ do not directly
inherit a dissipative boundary condition, which obstructs direct energy
estimates for them. 

In 1-D, the equation provides a way to overcome this difficulty.
If $A_1(0)$ is invertible, then, for small $U$,
\begin{equation}\label{eq:one-dimensional-recovery}
 U_{x_1}=A_1^{-1}(U)\bigl(Q(U)-U_t\bigr).
\end{equation}
This relation between time and spatial derivatives is used in
\cite{CBDN2007}. Recursive use of
\eqref{eq:one-dimensional-recovery} gives, along sufficiently
small solutions,
\begin{equation}\label{eq:intro-time-equivalence}
 \|U(t,\cdot)\|_{H^s(\Om)}^2
 \asymp\sum_{k=0}^{s}
 \|\partial_t^kU(t,\cdot)\|_{L^2(\Om)}^2.
\end{equation}
Here $\asymp$ means that the two sides are equivalent up to a constant factor.
Thus an energy involving only time derivatives can control the
full spatial regularity. Furthermore, note that the invertibility of $A_1(0)$ is precisely
the 1-D non-characteristic boundary condition. This observation
motivates the use of non-characteristicity to recover spatial
regularity in higher dimensions. In the following, we consider
two different types of domains.

On a bounded smooth domain, the non-characteristic boundary
condition implies ellipticity of the spatial operator at $U=0$.
Together with the dissipative boundary condition, this yields an elliptic
boundary estimate and hence the equivalence
\eqref{eq:intro-time-equivalence} for sufficiently small solutions.
The ellipticity imposed by this geometry excludes some important hyperbolic
systems, including the compressible Euler equations. Therefore, we also consider
the periodic cylinder $\Om=(0,1)\times\T^{d-1}$, whose
boundary consists of two faces. Let $x'=(x_2,\ldots,x_d)\in\T^{d-1}$ denote
the tangential spatial variables. We estimate time and tangential
derivatives and recover normal derivatives recursively from the equation.
For sufficiently small solutions, this gives
\begin{equation}\label{eq:intro-tangential-equivalence}
 \|U(t,\cdot)\|_{H^s(\Om)}^2
 \asymp\sum_{k+|\alpha|\le s}
 \|\partial_t^k\partial_{x'}^\alpha U(t,\cdot)\|_{L^2(\Om)}^2.
\end{equation}
The equivalences \eqref{eq:intro-time-equivalence} and
\eqref{eq:intro-tangential-equivalence} allow us to close the
nonlinear energy estimates and prove global existence and
exponential $H^s$ stability for small compatible initial data.

We point out an exception to the non-characteristic hypothesis:
multi-D quasilinear Maxwell systems admit exponential
decay under absorbing boundary feedback \cite{PS2020}. For these systems, the normal matrix $A_\nu^0(x)$ is not
invertible, so Assumption~\ref{ass:noncharacteristic} is not satisfied.
The additional div--curl structure, together with the boundary
condition, nevertheless provides elliptic estimates that recover
spatial derivatives from time derivatives. 

Finally, we give a counterexample showing that, unlike in the 1-D case,
our assumptions do not ensure an exponential $C^1$ decay estimate
in terms of the initial $C^1$ norm,
owing to derivative loss in multi-D wave propagation;
see Section~\ref{sec:c1-counterexample}.

The paper is organized as follows.
Section~\ref{sec:setting} gives the assumptions and  main theorems.
Section~\ref{sec:smooth-proof} proves the smooth-domain result using
elliptic recovery. Section~\ref{sec:cylinder-proof} proves the
cylinder result using tangential estimates and normal recovery.
Section~\ref{sec:c1-counterexample}
gives a counterexample to such a $C^1$ decay estimate.
Appendix~\ref{app:nonlinear} collects the nonlinear estimates used in Sections~\ref{sec:smooth-proof} and~\ref{sec:cylinder-proof}.

\paragraph{Acknowledgements.}
This work forms part of my doctoral dissertation.
I am deeply grateful to my PhD advisor, Prof.~Wen-An Yong,
for introducing me to the study of multi-D hyperbolic systems. I thank Dr.~Yubo Bai for discussions that helped me recognize
the importance of elliptic operators.
This work was initially concerned with semilinear systems.
I thank Prof.~Christophe Prieur and Prof.~Jean-Michel Coron
for suggesting the extension to quasilinear systems.

\section{Assumptions and main results}\label{sec:setting}

We state the common dissipation assumptions and the local existence
theorem, using the boundary variables defined in
Section~\ref{sec:introduction}. We then give the two stabilization
results for the different domain geometries. We illustrate Theorems~\ref{thm:smooth} and~\ref{thm:cylinder}
with two examples and numerical simulations.

\subsection{Assumptions and local well-posedness}
We assume that system~\eqref{eq:system} is symmetrizable
and that its linearization satisfies interior and boundary
dissipation conditions.
\begin{assumption}[Symmetrizability and internal dissipation]\label{ass:sym}
There are a smooth symmetric matrix $M=M(x,U)$
and constants $C\ge1$ and $\lambda_0>0$ such that, for $U\in G$,
\begin{equation}\label{eq:symmetrizer}
 C^{-1}\Id\le M(x,U)\le C\Id,\qquad
 M(x,U)A_j(U)=A_j^\top(U)M(x,U),\quad j=1,\ldots,d.
\end{equation}
At equilibrium, set $M_0(x)=M(x,0)$, $A_j^0=A_j(0)$,
and $Q_0=Q_U(0)$. We assume that
\begin{equation}\label{eq:interior}
 \sum_{j=1}^d(M_0A_j^0)_{x_j}+M_0Q_0+Q_0^\top M_0
 \le-2\lambda_0M_0.
\end{equation}
\end{assumption}
\begin{assumption}[Boundary dissipation]\label{ass:boundary}
There is $\delta>0$ such that, for every $x\in\Ga$ and every
$U\in\R^n$ satisfying  $\zeta_-=K(x)\zeta_+,$ one has
\begin{equation*}
 U^\top M_0(x)A_\nu^0(x)U\ge\delta|\zeta_+|^2,
\end{equation*}
where $\zeta=\Pi^{-1}U=(\zeta_-^\top,\zeta_+^\top)^\top$ and $K(x)=\Phi_{\zeta_+}(x,0)$.
\end{assumption}
Once the symmetrizer $M_0$ is fixed, Assumption~\ref{ass:boundary}
holds if the entries of $K(x)$ are uniformly sufficiently small;
see \cite{YY2024}. Physically, the incoming energy flux is smaller
than the outgoing energy flux. Note that
Assumptions~\ref{ass:sym} and~\ref{ass:boundary} imply that
the linearized system
\begin{equation*}
 \begin{cases}
 U_t+\displaystyle\sum_{j=1}^d A_j(0)U_{x_j}=Q_U(0)U,
   &(t,x)\in(0,\infty)\times\Om,\\
 \zeta_-=K(x)\zeta_+,
   &(t,x)\in(0,\infty)\times\Ga,\\
   U(0,x)=U_0(x),&x\in\Om,
 \end{cases}
\end{equation*}
is exponentially stable in $L^2(\Om)$, with Lyapunov functional
\begin{equation*}
 \mathcal V(U)=\int_\Om U^\top M(x,0)U\dd x,
 \qquad
 \frac{\dd}{\dd t}\mathcal V(U(t))\le-2\lambda_0\mathcal V(U(t)).
\end{equation*}

 The following lemma estimates the boundary flux under nonhomogeneous
boundary conditions, as needed for higher-order energy estimates.
\begin{lemma}[Nonhomogeneous boundary flux]\label{lem:inhom-flux}
Under Assumptions~\ref{ass:noncharacteristic}, \ref{ass:sym},
and~\ref{ass:boundary}, there exist $R,c,C>0$ such that, for
$x\in\Ga$, $|u|\le R$, and any $U\in\R^n$ satisfying
$\zeta_-=K(x)\zeta_++g$, one has
\begin{equation*}
 U^\top M(x,u)A_\nu(x,u)U\ge c|\zeta_+|^2-C|g|^2,
\end{equation*}
where $\zeta=\Pi^{-1}U=(\zeta_-^\top,\zeta_+^\top)^\top$
and $g\in\R^{n_-}$.
\end{lemma}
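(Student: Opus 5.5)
The plan is to reduce everything to a quadratic form in $\zeta=(\zeta_-,\zeta_+)$ and use compactness together with continuity in $u$. Write $U=\Pi\zeta$ and set
\[
B(x,u):=\Pi(x)^\top M(x,u)A_\nu(x,u)\Pi(x),
\]
which is a symmetric matrix, because $MA_j$ is symmetric by \eqref{eq:symmetrizer}. The quantity to bound is $U^\top M A_\nu U=\zeta^\top B(x,u)\zeta$. Partition $B$ into blocks $B_{--},B_{-+},B_{++}$ according to the splitting $\zeta=(\zeta_-,\zeta_+)$.

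First I treat the homogeneous case $g=0$ at $u=0$. Substituting $\zeta_-=K\zeta_+$, Assumption~\ref{ass:boundary} says
\[
q_0(x,\zeta_+):=\zeta_+^\top\bigl(K^\top B_{--}K+K^\top B_{-+}+B_{-+}^\top K+B_{++}\bigr)(x,0)\,\zeta_+\ge\delta|\zeta_+|^2 .
\]

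Next I pass to $|u|\le R$ while keeping the relation $\zeta_-=K\zeta_+$. The map $(x,u)\mapsto B(x,u)$ is smooth. On a bounded domain $\Gamma$ is compact; on the cylinder $\Gamma$ consists of two compact faces. I also need $\Pi$ and $K$ to be bounded and continuous on $\Gamma$. If $\Pi$ is only defined locally, I would cover $\Gamma$ by finitely many charts, or simply take the boundedness of $\Pi,\Pi^{-1},K$ as part of the standing setup. It then follows that $\|B(x,u)-B(x,0)\|\le C_1|u|$ uniformly in $x\in\Gamma$. For $\zeta_-=K\zeta_+$ this gives
\[
\zeta^\top B(x,u)\zeta\ge(\delta-C_2R)|\zeta_+|^2\ge\tfrac{\delta}{2}|\zeta_+|^2
\]
once $R$ is small.

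Finally I handle $g\neq0$. Write $\zeta=\eta+h$ with $\eta=(K\zeta_+,\zeta_+)$ and $h=(g,0)$. Then
\[
\zeta^\top B\zeta=\eta^\top B\eta+2\eta^\top Bh+h^\top Bh\ge\tfrac{\delta}{2}|\zeta_+|^2-2\|B\|\,|\eta|\,|g|-\|B\||g|^2 .
\]
Since $|\eta|\le(1+\|K\|)|\zeta_+|$, Young's inequality gives
\[
2\|B\|\,|\eta|\,|g|\le\tfrac{\delta}{4}|\zeta_+|^2+C|g|^2 .
\]
Hence the claim holds with $c=\delta/4$ and a constant $C$ that depends on the uniform bounds of $B$, $K$ and $\Pi$.

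The main obstacle I expect is not the algebra but the uniformity of the constants over $x\in\Gamma$. This requires a globally bounded choice of $\Pi$ and $K$, which relies on the compactness of $\Gamma$. Non-characteristicity (Assumption~\ref{ass:noncharacteristic}) enters only here, by guaranteeing that the splitting defining $\Pi$ is smooth and of constant rank, so that $\Pi$ and $\Pi^{-1}$ stay bounded. Once that is in place, the remaining steps are a perturbation argument plus Young's inequality.
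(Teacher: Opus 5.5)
Your proof is correct and follows essentially the same route as the paper: both split $U$ into the part $\Pi(K\zeta_+,\zeta_+)$ satisfying the homogeneous linear boundary relation plus $\Pi(g,0)$, apply Assumption~\ref{ass:boundary} to the first part, treat the $u$-dependence as an $\mathcal O(|u|)$ perturbation, and finish with Young's inequality to obtain $c=\delta/4$. The only difference is bookkeeping. You absorb the $u$-perturbation on the homogeneous part before adding $g$, whereas the paper expands everything at $u=0$ and controls the $\mathcal O(|u||U|^2)$ error via $|U|^2\le C(|\zeta_+|^2+|g|^2)$; your explicit remark on uniform bounds for $\Pi$ and $K$ over $\Ga$ spells out what the paper leaves under ``by smoothness''.
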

\begin{proof}
Set
\begin{equation*}
 U=W+Z,\qquad
 W=\Pi\binom{K\zeta_+}{\zeta_+},\qquad
 Z=\Pi\binom{g}{0}.
\end{equation*}
By smoothness and Assumption~\ref{ass:boundary},
\begin{align*}
 U^\top M(x,u)A_\nu(x,u)U
 &=W^\top M_0A_\nu^0W
   +2W^\top M_0A_\nu^0Z
   +Z^\top M_0A_\nu^0Z
   +\mathcal O(|u||U|^2)\\
 &\ge\delta|\zeta_+|^2-C|\zeta_+||g|
   -C|g|^2-C|u||U|^2.
\end{align*}
Using $|U|^2\le C(|\zeta_+|^2+|g|^2)$ and Young's inequality,
we obtain, for $|u|\le1$,
\begin{equation*}
 U^\top M(x,u)A_\nu(x,u)U
 \ge\left(\frac\delta2-C|u|\right)|\zeta_+|^2-C|g|^2.
\end{equation*}
Choose $R\le1$ so that $CR\le\delta/4$.
The estimate follows with $c=\delta/4$.
\end{proof}

For an integer $s>d/2+1$, compatibility of order $s-1$ means
that the initial time derivatives, computed recursively from
\eqref{eq:system}, satisfy the time-differentiated boundary condition
through order $s-1$; see \cite[Section~11.1.2]{BGS2007}.
We use the solution space
\begin{equation*}
 \mathcal X^s([0,T])
 :=\bigcap_{k=0}^s C^k([0,T];H^{s-k}(\Om;\R^n)).
\end{equation*}
We use a small-data local theory, since the structural assumptions
hold near the zero equilibrium.
Following \cite[Appendix~A, Theorem~A2]{Schochet1986} and
\cite[Section~1.1]{Trakhinin2006}, we have the following local
existence and continuation theorem.

\begin{theorem}[Local existence and continuation]\label{thm:local}
Let $\Om$ be either of the domains introduced in
Section~\ref{sec:introduction}, and let $s>d/2+1$ be an integer.
Under Assumptions~\ref{ass:noncharacteristic} and~\ref{ass:boundary},
together with \eqref{eq:symmetrizer} in Assumption~\ref{ass:sym},
there exists $\delta_0>0$ such that, for
every $U_0\in H^s(\Om;\R^n)$ satisfying
\begin{equation*}
 \norm{U_0}_{H^s(\Om)}\le\delta_0
\end{equation*}
and the compatibility conditions of order $s-1$, problem
\eqref{eq:system} has a unique maximal classical solution $U$ on $[0,T_{\max})$,
with $T_{\max}\in(0,\infty]$, such that
$U\in\mathcal X^s([0,T])$ for every $0<T<T_{\max}$.
Moreover, if
\begin{equation*}
 \norm{U(t,\cdot)}_{H^s(\Om)}\le\delta_0
 \qquad\text{for all }t\in[0,T_{\max}),
\end{equation*}
then $T_{\max}=\infty$.
\end{theorem}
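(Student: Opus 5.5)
The plan is to reduce the statement to the classical local theory for symmetrizable hyperbolic initial--boundary value problems with maximally dissipative (or uniformly Kreiss--Lopatinskii) boundary conditions, as in \cite[Appendix~A, Theorem~A2]{Schochet1986} and \cite[Section~1.1]{Trakhinin2006}. The work is to check that, for small states, the nonlinear problem \eqref{eq:system} falls within that framework. Three ingredients are needed: symmetrizability from \eqref{eq:symmetrizer}; a non-characteristic boundary of constant multiplicity near $U=0$; and a boundary condition of the right rank that is strictly dissipative for the linearized, frozen-coefficient problem.

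\textbf{Step 1: structure near $U=0$.} By Assumption~\ref{ass:noncharacteristic} and continuity, $A_\nu(x,u)$ is invertible for $|u|\le R$. Its signature therefore stays $(n_-,n_+)$, so the number of boundary conditions, $n_-$, equals the number of incoming characteristics. Write the boundary condition as $b(x,U):=L_-U-\Phi(x,L_+U)=0$. Its linearization at a state $u$ acts on $\dot U$ by $L_-\dot U-\Phi_{\zeta_+}(x,L_+u)L_+\dot U$. This operator has full rank $n_-$, and its kernel is $\{\zeta_-=K_u\zeta_+\}$ with $K_u=\Phi_{\zeta_+}(x,L_+u)=K(x)+\mathcal O(|u|)$.

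\textbf{Step 2: uniform energy estimate for the linearized problem.} Consider the linear problem with coefficients frozen along a small background $u$, with boundary datum $\zeta_-=K_u\zeta_++g$. Write $K_u\zeta_+=K\zeta_++(K_u-K)\zeta_+$, absorb the perturbation using smallness of $|u|$, and apply Lemma~\ref{lem:inhom-flux}. This gives $U^\top M(x,u)A_\nu(x,u)U\ge c|\zeta_+|^2-C|g|^2$. The bound yields the standard $L^2$ estimate with boundary control of the outgoing trace, uniformly in $u$. Applying the same argument to $\partial_t^k\partial_{\mathrm{tan}}^\alpha$ derivatives, and recovering normal derivatives from the equation via invertibility of $A_\nu$, gives the $\mathcal X^s$ a priori estimate. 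This is the usual route in \cite{Schochet1986,Trakhinin2006}. The boundary $\Ga$ of either domain type is compact and smooth, and localization with $\Pi(x)$ chosen smoothly near each boundary point is harmless.

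\textbf{Step 3: existence, uniqueness, continuation.} Construct $U$ by the standard Picard iteration $U^{m+1}$, solving the linear problem with coefficients evaluated at $U^m$ and boundary law linearized around $U^m$. Use the compatibility conditions of order $s-1$ to approximate the data by compatible smooth data. The estimate from Step~2 gives boundedness in $\mathcal X^s([0,T])$ and contraction in a lower norm. Uniqueness follows from the $L^2$ estimate for the difference of two solutions.

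For the continuation criterion, note that the existence time depends only on an upper bound for $\|U_0\|_{H^s}$ below $\delta_0$; here $\delta_0\le cR$, chosen via Sobolev embedding $H^s\subset W^{1,\infty}$, keeps solutions within the region where Steps~1--2 hold. Compatibility is propagated by the solution. Hence a uniform bound $\|U(t)\|_{H^s}\le\delta_0$ lets one restart from times arbitrarily close to $T_{\max}$ with a fixed step, so $T_{\max}=\infty$.

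The main obstacle is Step~2. There one needs the linearized boundary condition to be uniformly stable, with tangential and time derivatives controlled, even though normal derivatives do not satisfy the boundary condition. I would handle this exactly as in the cited works: the dissipativity from Lemma~\ref{lem:inhom-flux} gives control of the $\zeta_+$ trace, which absorbs the commutator boundary terms.
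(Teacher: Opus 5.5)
Your proposal is correct and follows the same route as the paper, which gives no argument of its own for this theorem beyond invoking \cite[Appendix~A, Theorem~A2]{Schochet1986} and \cite[Section~1.1]{Trakhinin2006}. Your Steps~1--3 make explicit the hypothesis check that this citation relies on: the constant signature of $A_\nu(x,u)$ near $U=0$, uniform strict dissipativity of the linearized boundary law via Lemma~\ref{lem:inhom-flux}, the iteration with boundary-trace control, and the restart argument with a uniform time step using propagated compatibility.
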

\subsection{A bounded smooth Euclidean domain}

Let us state the first main result, which uses ellipticity of the first-order operator. 
\begin{theorem}\label{thm:smooth}
Let $\Om\subset\R^d$, $d\ge2$, be a bounded $C^\infty$ domain.
Assume Assumptions~\ref{ass:noncharacteristic}, \ref{ass:sym},
and~\ref{ass:boundary}.
For each integer $s>d/2+1$ and each $0<\lambda<\lambda_0$,
there exist $\varepsilon,C>0$ such that every
$U_0\in H^s(\Om;\R^n)$ satisfying the compatibility conditions
of order $s-1$ and $\norm{U_0}_{H^s(\Om)}\le\varepsilon$ generates
a unique global solution of \eqref{eq:system}
in $\mathcal X^s([0,T])$ for every finite $T$, with
\begin{equation*}
 \norm{U(t,\cdot)}_{H^s(\Om)}\le C e^{-\lambda t}\norm{U_0}_{H^s(\Om)}
 \qquad(t\ge0).
\end{equation*}
\end{theorem}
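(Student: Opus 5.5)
The plan is a bootstrap built on a time-derivative energy and on the equivalence \eqref{eq:intro-time-equivalence}.

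\textbf{Energy.} For $k=0,\dots,s$ set $U^{(k)}=\partial_t^kU$ and
\begin{equation*}
 \mathcal E(t)=\sum_{k=0}^s\int_\Om (U^{(k)})^\top M(x,U)\,U^{(k)}\dd x .
\end{equation*}
On $[0,T_{\max})$ I assume a priori that $\norm{U(t)}_{H^s}\le\eta$, with $\eta\le\delta_0$ small. The final claim is
\begin{equation*}
 \mathcal E(t)\le Ce^{-2\lambda t}\mathcal E(0).
\end{equation*}

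\textbf{Step 1: time-derivative energy estimate.} Differentiating \eqref{eq:system} $k$ times in $t$ gives
\begin{equation*}
 U^{(k)}_t+\sum_jA_j(U)U^{(k)}_{x_j}=Q_U(U)U^{(k)}+F_k,
\end{equation*}
where $F_k$ is a sum of commutators, each containing at least two factors of derivatives of $U$. By Moser/Gagliardo--Nirenberg (Appendix~\ref{app:nonlinear}), using $s>d/2+1$,
\begin{equation*}
 \norm{F_k}_{L^2}\le C\norm{U}_{H^s}^2 .
\end{equation*}
The top case $k=s$ needs care: $F_s$ contains $\partial_t^{s}A(U)\,U_x$, which is handled by $U_x\in L^\infty$ and $\partial_t^sU\in L^2$. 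Multiplying by $2M(x,U)U^{(k)}$ and integrating, the symmetry in \eqref{eq:symmetrizer} lets me integrate by parts. The interior term is
\begin{equation*}
 \sum_j(MA_j)_{x_j}+MQ_U+Q_U^\top M+M_t,
\end{equation*}
which differs from the left side of \eqref{eq:interior} by $O(\norm{U}_{W^{1,\infty}})$. The boundary term is $-\int_\Ga (U^{(k)})^\top MA_\nu U^{(k)}$.

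\textbf{Step 2: boundary flux.} Differentiating $\zeta_-=\Phi(x,\zeta_+)$ $k$ times in time gives
\begin{equation*}
 \partial_t^k\zeta_-=K\partial_t^k\zeta_++g_k,\qquad
 g_k=(\Phi_{\zeta_+}(x,\zeta_+)-K)\partial_t^k\zeta_++\text{products of lower time derivatives}.
\end{equation*}
Lemma~\ref{lem:inhom-flux} then gives
\begin{equation*}
 \text{flux}\ge c|\partial_t^k\zeta_+|^2-C|g_k|^2 .
\end{equation*}
The first part of $g_k$ is $O(|\zeta_+|)|\partial_t^k\zeta_+|$ and is absorbed into $c|\partial_t^k\zeta_+|^2$. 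The product terms are bounded in $L^2(\Ga)$ by
\begin{equation*}
 \eta\sum_{j<k}\norm{\partial_t^j\zeta_+}_{L^2(\Ga)}
 +C\norm{U}_{H^s}^2\Big(\sum_{j\le k}\norm{\partial_t^j\zeta_+}_{L^2(\Ga)}\Big).
\end{equation*}
For this I put the factor of lowest order in $L^\infty(\Ga)$ via traces of $H^{s-j}$ with $s-j>d/2+1/2$ when needed. A top-order factor is kept in $L^2(\Ga)$ and absorbed by the good boundary term. This is one of the delicate points. Summing over $k$ then yields
\begin{equation*}
 \frac{d}{dt}\mathcal E\le-2\lambda_0\mathcal E+C\eta\,\mathcal E+C\norm{U}_{H^s}^3 .
\end{equation*}

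\textbf{Step 3: elliptic recovery (main obstacle).} I must show $\norm{U}_{H^s}^2\le C\mathcal E$ for small solutions, i.e.\ \eqref{eq:intro-time-equivalence}. Write the equation as
\begin{equation*}
 \mathcal L U:=\sum_jA_j^0U_{x_j}=-U_t+Q(U)-\sum_j(A_j(U)-A_j^0)U_{x_j},
\end{equation*}
with boundary condition $L_-U-KL_+U=\Phi(x,\zeta_+)-K\zeta_+$. Non-characteristicity (Assumption~\ref{ass:noncharacteristic}) and symmetrizability make $\mathcal L$ with this boundary operator a first-order boundary problem whose Lopatinskii condition is implied by the dissipativity in Assumption~\ref{ass:boundary}. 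The elliptic estimate I need is
\begin{equation*}
 \norm{V}_{H^{m+1}}\le C\bigl(\norm{\mathcal LV}_{H^m}+\norm{BV}_{H^{m+1/2}(\Ga)}+\norm{V}_{L^2}\bigr).
\end{equation*}
The ellipticity of $\mathcal L$ itself is the point I expect to need most justification. The paper asserts that non-characteristicity at every normal direction of a closed hypersurface yields ellipticity of the symbol $\sum\xi_jA_j^0$, and I would invoke this. I apply the estimate to $V=\partial_t^kU$ with $m=s-k-1$, descending from $k=s-1$ to $k=0$. The quasilinear and boundary nonlinearities are $O(\eta)\norm{U}_{H^s}$-type terms and are absorbed. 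Because $\sum_k\norm{\partial_t^kU}_{L^2}$ appears on the right, this gives
\begin{equation*}
 \sum_k\norm{\partial_t^kU}_{H^{s-k}}\le C\mathcal E^{1/2}.
\end{equation*}
Conversely, $\mathcal E\le C\norm{U}_{H^s}^2$ follows from the equation.

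\textbf{Conclusion.} Inserting the equivalence into Step 2 gives
\begin{equation*}
 \mathcal E'\le-(2\lambda_0-C\eta)\mathcal E\le-2\lambda\mathcal E
\end{equation*}
for $\eta$ small. Gronwall then gives
\begin{equation*}
 \norm{U(t)}_{H^s}\le C_1e^{-\lambda t}\norm{U_0}_{H^s}.
\end{equation*}
Choosing $\varepsilon$ with $C_1\varepsilon<\min(\eta,\delta_0)$ closes the bootstrap by continuity, and Theorem~\ref{thm:local} gives $T_{\max}=\infty$.
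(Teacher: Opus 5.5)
Your proposal follows essentially the paper's own proof. It uses the same time-derivative energy weighted by $M(x,U)$, and Lemma~\ref{lem:inhom-flux} with the $(\Phi_{\zeta_+}(x,\zeta_+)-K)\pa_t^k\zeta_+$ piece absorbed into the boundary dissipation, the Shapiro--Lopatinskii condition from Assumption~\ref{ass:boundary} with ellipticity from surjectivity of the Gauss map, descending elliptic recovery of $\norm{\pa_t^kU}_{H^{s-k}(\Om)}$, and Gronwall with continuation via Theorem~\ref{thm:local}. The one step to tighten is the intermediate boundary products such as $\Phi_{\zeta_+\zeta_+}(x,\zeta_+)[\pa_t^a\zeta_+,\pa_t^b\zeta_+]$ with $2\le a\le b$: putting the lower-order factor in $L^\infty(\Ga)$ needs $s-a>d/2$, which fails for instance when $d=s=4$ and $a=b=2$, so use instead the boundary product estimate $\norm{fg}_{L^2(\Ga)}\le C\norm f_{H^{s-\ell-1/2}(\Ga)}\norm g_{H^{\ell-1/2}(\Ga)}$ of Remark~\ref{rem:app-multiplication-cases}(iii), which bounds these terms by $C\norm{U}_{H^s(\Om)}^2$ directly.
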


\begin{example}[Planar elasticity with nonlinear volumetric response]
\label{ex:nonlinear-elasticity}
The velocity--dilatation--rotation formulation of elastic waves is
used in \cite{THM2016}. The following choice of constitutive law,
bulk damping, elastic foundation, and boundary feedback gives a
quasilinear example of Theorem~\ref{thm:smooth}.
Let $d=2$, $c_p>c_s>0$, and $\beta,\gamma>0$.
For a displacement $w(t,x)\in\R^2$, set
$e=\operatorname{div}w$,
$\omega=w_{2,x_1}-w_{1,x_2}$, $v=w_t+\gamma w$, and
$U=(e,\omega,v_1,v_2)^\top\in G$, where $G\subset\R^4$
is a sufficiently small open neighborhood of zero.
The closed first-order system is
\begin{equation*}
 \begin{cases}
 e_t-\operatorname{div}v=-\gamma e,\\
 \omega_t-(v_{2,x_1}-v_{1,x_2})=-\gamma\omega,\\
 v_t-\nabla h(e)-c_s^2J\nabla\omega=-\gamma v,
 \end{cases}
 \qquad
 h(e)=c_p^2e+\beta e^3,\qquad
 J=\begin{pmatrix}0&-1\\1&0\end{pmatrix}.
\end{equation*}
With $a(e)=h'(e)$, its matrices and symmetrizer are
\begin{equation*}
 A_1(U)=\begin{pmatrix}
 0&0&-1&0\\0&0&0&-1\\-a(e)&0&0&0\\0&-c_s^2&0&0
 \end{pmatrix},\qquad
 A_2(U)=\begin{pmatrix}
 0&0&0&-1\\0&0&1&0\\0&c_s^2&0&0\\-a(e)&0&0&0
 \end{pmatrix},
\end{equation*}
with $M(U)=\operatorname{diag}(a(e),c_s^2,1,1)$ and $Q(U)=-\gamma U$.
Both $M(U)A_j(U)$ are symmetric. The eigenvalues of the frozen
symbol are $\pm c_p|\xi|$ and $\pm c_s|\xi|$, so every smooth
boundary is non-characteristic. Moreover, the left side of
\eqref{eq:interior} equals $-2\gamma M_0$.
At the boundary, let $\tau=J\nu$ with $\nu$ the unit outward normal vector and prescribe
\begin{equation*}
 v\cdot\nu=-c_pe,\qquad v\cdot\tau=-c_s\omega.
\end{equation*}
These conditions set the incoming variables to zero,
so $\Phi=0$. Consequently, we have
\begin{equation*}
 U^\top M_0A_\nu^0U
 =2c_p^3e^2+2c_s^3\omega^2
 \ge c_s\min\{1,c_s^2\}|U|^2.
\end{equation*}
All assumptions therefore hold with $\lambda_0=\gamma$. Theorem~\ref{thm:smooth} then yields a unique global solution
with exponential $H^s$ decay for this system with sufficiently small compatible initial data, where $s>2$ is an integer.

\end{example}

\subsection{A flat periodic cylinder}

The second result uses flatness and periodicity to replace full
spatial ellipticity by invertibility in the normal direction.
Write $x=(x_1,x')$ and set
\begin{equation*}
 \Om=(0,1)\times\T^{d-1},\qquad
 \Ga_i=\{i\}\times\T^{d-1},\quad i=0,1.
\end{equation*}
The outward normals are $\nu_0=-e_1$ and $\nu_1=e_1$, so
$A_{\nu_0}^0=-A_1^0$ and $A_{\nu_1}^0=A_1^0$.
Consequently, the positive-speed modes of $A_1^0$ are incoming
at $\Ga_0$ and outgoing at $\Ga_1$; the negative-speed modes have
the opposite roles. On each face, the boundary condition is
\begin{equation}\label{eq:cylinder-graph}
 \zeta_{i,-}=\Phi_i(x',\zeta_{i,+}).
\end{equation}
Note that $\Phi_i(x',0)=0$ and set $K_i(x')=\Phi_{i,\zeta_+}(x',0)$.

\begin{theorem}\label{thm:cylinder}
Let $\Om=(0,1)\times\T^{d-1}$, $d\ge2$, and assume
Assumptions~\ref{ass:noncharacteristic}, \ref{ass:sym},
and~\ref{ass:boundary}, with the boundary graphs
\eqref{eq:cylinder-graph} on both faces.
For each integer $s>d/2+1$ and each $0<\lambda<\lambda_0$,
there exist $\varepsilon,C>0$ such that every
$U_0\in H^s(\Om;\R^n)$ satisfying compatibility of order $s-1$
and $\norm{U_0}_{H^s(\Om)}\le\varepsilon$ generates
a unique global solution of \eqref{eq:system}
in $\mathcal X^s([0,T])$ for every finite $T$, with
\begin{equation*}
 \norm{U(t,\cdot)}_{H^s(\Om)}\le C e^{-\lambda t}\norm{U_0}_{H^s(\Om)}
 \qquad(t\ge0).
\end{equation*}
\end{theorem}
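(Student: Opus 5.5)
We plan to follow the strategy announced in the introduction. The idea is to build an energy from time and tangential derivatives only, show that it satisfies a Lyapunov-type differential inequality, and use the equivalence \eqref{eq:intro-tangential-equivalence} together with the continuation criterion of Theorem~\ref{thm:local} to close a bootstrap.

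\textbf{Step 1: the weighted energy.} For a solution $U\in\mathcal X^s([0,T])$ with $\sup_t\norm{U(t)}_{H^s}\le\eta$ small, and for $k+|\alpha|\le s$, set $V_{k,\alpha}=\partial_t^k\partial_{x'}^\alpha U$ and
\begin{equation*}
 E(t)=\sum_{k+|\alpha|\le s}\int_\Om V_{k,\alpha}^\top M(x,U)V_{k,\alpha}\dd x.
\end{equation*}
These derivatives commute with the boundary faces $\Ga_0,\Ga_1$. Differentiating the equation gives
\begin{equation*}
 \partial_tV+\sum_jA_j(U)\partial_{x_j}V=Q_U(U)V+F_{k,\alpha},
\end{equation*}
where the commutator $F_{k,\alpha}$ is a sum of products containing at least two factors of derivatives of $U$. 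Differentiating the boundary graph \eqref{eq:cylinder-graph} in $t$ and $x'$ gives
\begin{equation*}
 L_-V=K_i(x')L_+V+\partial_{x'}^{\le|\alpha|}K_i\text{-terms}+g_{k,\alpha},
\end{equation*}
with $g_{k,\alpha}$ quadratic. The $x'$-derivatives of $K_i$ acting on lower-order boundary traces are linear, not small, and must be handled. For them we plan a hierarchy of weights $E=\sum\mu_{|\alpha|}E_{k,\alpha}$ with $\mu$ rapidly decreasing in $|\alpha|$. The lower-order linear traces $|\zeta_+|$ are absorbed by the good boundary term $c|\zeta_+|^2$ of Lemma~\ref{lem:inhom-flux} at lower order. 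A cleaner alternative is to apply Lemma~\ref{lem:inhom-flux} to $\partial_{x'}^\alpha(\zeta_--K\zeta_+)$ directly, which reduces these terms to lower-order outgoing traces.

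\textbf{Step 2: the energy inequality.} Integrating by parts, the interior term uses \eqref{eq:interior}, perturbed by $O(\eta)$ from the $U$-dependence of $M$ and $A_j$, and gives $-2\lambda_0$ times the energy. The boundary term is controlled by Lemma~\ref{lem:inhom-flux}. The nonlinear terms $F$ and $g$ are bounded by Moser/Gagliardo--Nirenberg estimates (Appendix) as $C\eta\norm{U}_{H^s}^2$. Boundary traces of $g$ need care: we plan to use $H^{s-1/2}$-type trace bounds of products, or to write $g$ as a product that includes an outgoing factor. The result is
\begin{equation*}
 E'\le-2(\lambda_0-C\eta)E+C\eta\norm{U}_{H^s}^2.
\end{equation*}

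\textbf{Step 3: normal recovery.} We show $\norm{U}_{H^s}^2\le C E$ for small $\eta$. By Assumption~\ref{ass:noncharacteristic}, $A_1(U)$ is invertible near $0$, so
\begin{equation*}
 \partial_{x_1}U=A_1(U)^{-1}\Bigl(Q(U)-U_t-\sum_{j\ge2}A_jU_{x_j}\Bigr).
\end{equation*}
Induct on the number $m$ of $x_1$-derivatives: $\partial_{x_1}^m\partial_t^k\partial_{x'}^\alpha U$ with $m+k+|\alpha|\le s$ is expressed through terms with $m-1$ normal derivatives and one more $t$ or $x'$ derivative, plus nonlinear terms bounded by $C\eta\norm{U}_{H^s}$. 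This gives \eqref{eq:intro-tangential-equivalence}. The reverse inequality is immediate from the equation and $H^s$ algebra. Inserting this into Step 2, for $\eta$ small with $\lambda_0-C\eta\ge\lambda$, Gronwall gives $E(t)\le e^{-2\lambda t}E(0)$. Finally, $E(0)\le C\norm{U_0}_{H^s}^2$ because the initial time derivatives are computed recursively from the equation.

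\textbf{Step 4: bootstrap.} Choose $\varepsilon$ so that $C\varepsilon<\min(\eta,\delta_0)$. On any interval where $\norm{U}_{H^s}\le\eta$, Step 3 gives $\norm{U(t)}_{H^s}\le Ce^{-\lambda t}\varepsilon$, so by continuity the bound persists, and Theorem~\ref{thm:local} yields $T_{\max}=\infty$. The main obstacle we expect is Step 2's boundary treatment: the linear, non-small terms from tangential derivatives of $K_i(x')$, together with the loss of traces in the quadratic terms $g$. If the direct Lemma~\ref{lem:inhom-flux} argument fails, we would first try the weighted hierarchy, putting time derivatives at top weight since these carry no $K$-commutators.
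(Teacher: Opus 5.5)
Your plan follows the paper's proof: a time--tangential energy, recovery of normal derivatives from the invertibility of $A_1(0)$ (Lemma~\ref{lem:cylinder-space-equivalence}), Lemma~\ref{lem:inhom-flux} on each face with weights decreasing in derivative order so that the $K_i$-commutator traces are absorbed by the lower-order boundary dissipation, and then Gronwall and the continuation argument. The paper weights by $\theta^m$ in the total order $m$; your weighting by tangential order alone also works, since these commutators only lower $|\alpha'|$.

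Two of your hedges need to be settled the way the paper settles them.

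\begin{itemize}
\item The weighted hierarchy is not optional. Without it, your ``cleaner alternative'' leaves a term $C_K\sum_{j<m}D_j$, and $C_K$ need not be small compared with $c$.
\item At top order, the quadratic boundary term cannot be bounded by $\eta\norm{U}_{H^s}^2$ via traces. The trace theorem gives no $L^2(\Ga)$ bound on $Z^\alpha\zeta_+$ for $|\alpha|=s$. You must instead bound it by $C_Rh^2$ times the outgoing boundary norms and absorb it into the dissipation $cD$. This is your ``outgoing factor'' option.
\end{itemize}
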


\begin{example}[Subsonic gas flow with damping]\label{ex:damped-euler}
Consider a gas in a thin annular duct of length $L$, constant mean
radius $R_a$, and constant gap width $h\ll R_a$. Integrating across
the gap and neglecting radial variations gives a two-dimensional (2-D)
axial--circumferential Euler model; see
\cite[Section~2, equations~(2.1a--d)]{lit-c2d5f55c2bba}.
We take impermeable slip walls at the inner and outer radii and
specialize to isentropic flow. The pressure law $p=p(\rho)$ is
smooth for $\rho>0$, with $p'(\rho)>0$; for example,
$p(\rho)=a\rho^\gamma$ with $a>0$ and $\gamma>1$.

Let $X$ be the physical axial coordinate and $\theta$ the angular
coordinate. With $x_1=X/L$, $x_2=\theta/(2\pi)$, and a common velocity
scale $V_*$, scale time by $L/V_*$ and put
$\beta=L/(2\pi R_a)>0$. In these dimensionless variables,
$\Om=(0,1)\times\T$. Here $x_2$ is the tangential spatial variable $x'$ introduced above. We add linear velocity damping and a constant
axial drive, obtaining
\begin{equation*}
 \begin{cases}
 \rho_t+u\rho_{x_1}+\beta v\rho_{x_2}
       +\rho(u_{x_1}+\beta v_{x_2})=0,\\
 u_t+u u_{x_1}+\beta v u_{x_2}
       +\dfrac{p'(\rho)}{\rho}\rho_{x_1}=-\kappa(u-\bar u),\\
 v_t+u v_{x_1}+\beta v v_{x_2}
       +\beta\dfrac{p'(\rho)}{\rho}\rho_{x_2}=-\kappa v,
 \end{cases}
\end{equation*}
where $\kappa>0$. The axial source combines drag $-\kappa u$
with the driving acceleration $\kappa\bar u$, so that
$(\bar\rho,\bar u,0)$ is a steady state. The damping and drive
are additional modelling assumptions in this example.
Fix $\bar\rho>0$ and a subsonic velocity
$0<\bar u<\bar c:=\sqrt{p'(\bar\rho)}$.

For $U=(\rho-\bar\rho,u-\bar u,v)^\top$, the state space $G$
is a sufficiently small open neighborhood of zero contained in
$\{U\in\R^3:\bar\rho+U_1>0\}$. The system has
\begin{equation*}
 \begin{gathered}
 A_1(U)=\begin{pmatrix}
 u&\rho&0\\p'(\rho)/\rho&u&0\\0&0&u
 \end{pmatrix},\qquad
 A_2(U)=\beta\begin{pmatrix}
 v&0&\rho\\0&v&0\\p'(\rho)/\rho&0&v
 \end{pmatrix},\\
 Q(U)=(0,-\kappa U_2,-\kappa U_3)^\top,\qquad
 M(x,U)=(K_0-x_1)\diag\bigl(p'(\rho)/\rho^2,1,1\bigr).
 \end{gathered}
\end{equation*}
Let $\bar U=(\bar\rho,\bar u,0)^\top$ denote the steady state,
which corresponds to $U=0$ in the perturbation variables.
Since $\det A_1(0)=\bar u(\bar u^2-\bar c^2)\ne0$,
Assumption~\ref{ass:noncharacteristic} holds.
Following \cite{YY2024}, the matrix $M(x,U)$ satisfies
Assumption~\ref{ass:sym} for sufficiently large $K_0$.
We choose a strictly dissipative boundary feedback satisfying
Assumption~\ref{ass:boundary}, for example $\zeta_{i,-}=0$ on both faces.
Theorem~\ref{thm:cylinder} then yields a unique global solution
with exponential $H^s$ decay for sufficiently small compatible
perturbations of $\bar U$, where $s>2$ is an integer.
\end{example}

\subsection{Numerical illustrations}

We illustrate the two examples with numerical simulations. Define
\begin{equation*}
 \psi(z)=\begin{cases}
 \exp\bigl(1-(1-z^2)^{-2}\bigr),& |z|<1,\\
 0,& |z|\ge1.
 \end{cases}
\end{equation*}
For Example~\ref{ex:nonlinear-elasticity}, take the unit disk
$\Om=B_1(0)$, $c_p=3/2$, $c_s=1$, $\beta=1$, and $\gamma=0.3$,
with $e_0=\omega_0=\psi(4|x|/3)$ and $v_0=0$.
We use the boundary feedback specified in the example.
Rotational symmetry reduces the computation to the radial variable.
We use a Chebyshev collocation method of degree $191$ on the signed
radial interval $[-1,1]$, with even scalar fields and odd radial
and azimuthal velocity components. The full Cartesian $H^3$ norm
is evaluated by radial Gaussian quadrature and $24$ equally spaced
angular points. Time integration uses the classical fourth-order
Runge--Kutta method with $\Delta t=5\times10^{-4}$.

For Example~\ref{ex:damped-euler}, take $p(\rho)=\rho^2/2$,
$\bar\rho=1$, $\bar u=1/2$, $\kappa=1$, and $\beta=1/2$.
Choose $U_0=0.1\psi(\tfrac52(x_1-\tfrac12))\cos(2\pi x_2)(1,0,0)^\top$.
This perturbation is supported in $1/10<x_1<9/10$ and is periodic in $x_2$.
We impose $u+\rho=3/2$ and $v=0$ at $x_1=0$, and
$u-\rho=-1/2$ at $x_1=1$.
One can verify that these boundary conditions satisfy the dissipation
condition in Assumption~\ref{ass:boundary}.
The nonlinear equations are discretized at $385$ Chebyshev--Lobatto
points in $x_1$ and $56$ equally spaced Fourier points in $x_2$,
using the same time integrator with $\Delta t=5\times10^{-5}$.
The boundary condition is enforced at every Runge--Kutta stage.
Figure~\ref{fig:h3-decay} plots the natural logarithm of the
$H^3$ norm and illustrates its decay in both examples.

\begin{figure}[htbp]
 \centering
 \includegraphics[width=\textwidth]{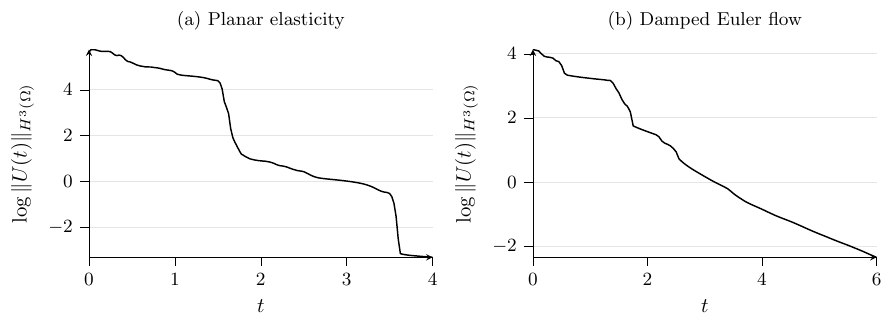}
 \caption{Decay of the  $H^3$ norm in
 Examples~\ref{ex:nonlinear-elasticity} and~\ref{ex:damped-euler}.}
 \label{fig:h3-decay}
\end{figure}

\section{Proof of Theorem~\ref{thm:smooth}}\label{sec:smooth-proof}

\subsection{Elliptic preliminaries}

We first recall the definition of the first-order elliptic operator and prove the ellipticity of the frozen principal operator. It will be used to recover all spatial derivatives needed for a Lyapunov energy
 from time derivatives.
\begin{definition}[Elliptic operator]\label{def:elliptic-operator}
Let $P=\sum_{j=1}^d C_j(x)\pa_{x_j}+C_0(x)$ be a first-order
operator with smooth $n\times n$ matrix coefficients on
$\overline\Om$.
Its principal symbol is $p(x,\xi)=\sum_{j=1}^d C_j(x)\xi_j$ with $\xi:=(\xi_1,\ldots,\xi_d) \in \R^d.$ The operator $P$ is
elliptic if $p(x,\xi)$ is invertible for every
$x\in\overline\Om$ and every real $\xi\in \R^d \backslash \{0\}$.
\end{definition}

\begin{proposition}\label{prop:geometry}
Under the hypotheses of Theorem~\ref{thm:smooth}, the spatial
operator $\sum_{j=1}^d A_j^0\pa_{x_j}$ is elliptic, with $A_j^0:=A_j(0) \in \R^{n\times n}.$
Moreover, $n$ is even, and the matrix $A_\nu^0(x):=\sum_{j=1}^d A_j^0\nu_j(x)$
has $n/2$
eigenvalues of each sign, counted with multiplicity for every point $x\in\Ga$.
\end{proposition}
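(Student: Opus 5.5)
The plan is a topological argument on the Gauss map of the boundary, combined with the symmetrizer. For $x\in\Ga$ and $\xi\in\R^d$ set $p_0(\xi):=\sum_{j}A_j^0\xi_j$. Since $M(x,0)p_0(\xi)$ is symmetric and $M(x,0)$ is positive definite, $p_0(\xi)$ is similar to the symmetric matrix $M^{1/2}p_0(\xi)M^{-1/2}$. Hence $p_0(\xi)$ has real eigenvalues, and they depend continuously on $\xi$. Note that $A_j^0$ itself does not depend on $x$; only $M$ does, but the similarity only needs the symmetrizer at some point. So invertibility of $p_0(\xi)$ is a property of the direction $\xi$ alone.

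\textbf{Step 1: ellipticity.} The Gauss map $\nu:\Ga\to S^{d-1}$ of a bounded smooth domain in $\R^d$ is surjective. Given $\omega\in S^{d-1}$, maximize the linear function $x\mapsto x\cdot\omega$ over the compact set $\overline\Om$. The maximum is attained at some $x^*\in\Ga$, and there the outward normal is $\nu(x^*)=\omega$. Assumption~\ref{ass:noncharacteristic} then gives that $p_0(\omega)=A_\nu^0(x^*)$ is invertible for every unit vector $\omega$. By homogeneity, $p_0(\xi)$ is invertible for all $\xi\neq0$. This is exactly ellipticity of $\sum_j A_j^0\pa_{x_j}$ in the sense of Definition~\ref{def:elliptic-operator}.

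\textbf{Step 2: the signature is constant.} Let $n_+(\omega)$ be the number of positive eigenvalues of $p_0(\omega)$, counted with multiplicity, for $\omega\in S^{d-1}$. Because $p_0(\omega)$ is similar to a symmetric matrix that depends continuously on $\omega$ (one may fix the symmetrizer $M(x_0,0)$ at a single point), its eigenvalues vary continuously. By Step 1 none of them crosses zero, so $n_+(\omega)$ is locally constant. Since $d\ge2$, the sphere $S^{d-1}$ is connected, and therefore $n_+$ is constant.

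\textbf{Step 3: evenness and the $n/2$ count.} Linearity gives $p_0(-\omega)=-p_0(\omega)$, so $n_+(-\omega)=n_-(\omega)$. Combined with constancy, $n_+=n_-$. Invertibility gives $n_++n_-=n$, so $n=2n_+$ is even and each sign occurs $n/2$ times. Since $A^0_\nu(x)=p_0(\nu(x))$, this holds at every $x\in\Ga$.

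\textbf{Main obstacle.} The only delicate point is Step 2: justifying that the eigenvalue count is continuous, given that $p_0(\omega)$ is not itself symmetric. I would handle this via the symmetrized family $M_0(x_0)^{1/2}p_0(\omega)M_0(x_0)^{-1/2}$, which is continuous in $\omega$ and symmetric. Its signature is constant along paths that avoid singular matrices, for instance by continuity of the roots of the characteristic polynomial. The hypothesis $d\ge2$ is used exactly for the connectedness of $S^{d-1}$; in 1-D the conclusion fails.
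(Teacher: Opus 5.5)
Your proof is correct and follows the paper's argument. Non-characteristicity, surjectivity of the Gauss map and homogeneity give ellipticity; then the symmetrized family $M_0^{1/2}\bigl(\sum_j\omega_jA_j^0\bigr)M_0^{-1/2}$ has constant signature on the connected sphere $S^{d-1}$, and the antipodal relation $p_0(-\omega)=-p_0(\omega)$ forces this signature to be $(n/2,n/2)$. The only difference is that you prove surjectivity of the Gauss map directly, by maximizing $x\cdot\omega$ over $\overline{\Omega}$, whereas the paper cites a reference for this fact.
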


\begin{proof}
The non-characteristic boundary assumption implies that
$\sum_{j=1}^d\nu_j(x)A_j^0$ is invertible for every $x\in\Ga$.
Since $\Om$ is bounded with smooth boundary, its outward unit
normal map $\nu:\Ga\to S^{d-1}$ is surjective
\cite{BallZarnescu2017}, where $S^{d-1}=\{\omega\in\R^d:|\omega|=1\}$ is the unit sphere.
By homogeneity, $\sum_{j=1}^d\xi_jA_j^0$ is therefore invertible
for every $\xi\in\R^d\setminus\{0\}$, which proves ellipticity.

Fix $x\in\Ga$. For every $\omega\in S^{d-1}$, the matrix
\[
 B(\omega):=M_0^{1/2}(x)
 \left(\sum_{j=1}^d\omega_jA_j^0\right)M_0^{-1/2}(x)
\]
is symmetric by Assumption~\ref{ass:sym} and invertible by
ellipticity. Since $S^{d-1}$ is connected and no eigenvalue
can pass through zero, the numbers of positive and negative
eigenvalues are constant in $\omega$.
The identity $B(-\omega)=-B(\omega)$ interchanges these two
numbers, so they must agree. Thus $n$ is even and each number
equals $n/2$.
Taking $\omega=\nu(x)$ and using the similarity of
$B(\nu(x))$ and $A_\nu^0(x)$ proves the assertion.
\end{proof}

Define the frozen spatial operator and its linear boundary map by
\begin{equation*}
 P_0v=\sum_{j=1}^d A_j^0v_{x_j}-Q_0v,\qquad
 B_0v=L_-v-K(x)L_+v
\end{equation*}
where $L_-$ and $L_+$ are the row blocks of $\Pi^{-1}$
corresponding to the incoming and outgoing characteristic
variables, respectively.
 We now recall the Shapiro--Lopatinskii condition, which is a necessary and sufficient condition for the elliptic boundary estimate.
\begin{definition}[Shapiro--Lopatinskii condition]\label{def:complementing}
For the first-order operator $P_0$, suppose that
$\operatorname{rank}B_0(x)=n/2$ for every $x\in\Ga$.
Write
\begin{equation*}
 p(x,\eta)=\sum_{j=1}^d A_j^0\eta_j,
 \qquad \eta=(\eta_1,\ldots,\eta_d)\in\R^d.
\end{equation*}
At a boundary point $x$, consider the differential equation
\begin{equation}\label{eq:elliptic-normal-model}
 -p(x,\nu(x))\frac{dw}{dr}+ip(x,\eta)w=0\quad(r>0),
 \qquad w(r)\longrightarrow0\quad(r\longrightarrow+\infty),
\end{equation}
with $i$ the imaginary unit.
Suppose moreover that, for every $x\in\Ga$ and every nonzero
$\eta\in\R^d$ satisfying $\eta\cdot\nu(x)=0$, the only solution of
\eqref{eq:elliptic-normal-model} satisfying $B_0(x)w(0)=0$ is $w\equiv0$.
Then we say that $B_0$ satisfies the Shapiro--Lopatinskii condition
for $P_0$; see \cite[Chapter~5, Section~4.5 and Section~6.7(b)--(c)]{CP1982}.
This condition is also called the elliptic complementing condition \cite{ADN1964}.
\end{definition}

For an elliptic operator equipped with a boundary condition satisfying
the Shapiro--Lopatinskii condition, we have the following elliptic
estimate; see \cite[Chapter~5, Theorem~4.4(ii) and Section~6.7(e)]{CP1982}. This elliptic estimate is a key ingredient in the proof of Theorem~\ref{thm:smooth}.
\begin{lemma}[Elliptic boundary estimate]\label{lem:elliptic-estimate}
Suppose that $P_0$ is a smooth first-order elliptic operator on
$\overline\Om$, where $\Om$ is bounded and smooth, and that the
smooth surjective boundary operator $B_0$ 
satisfies Definition~\ref{def:complementing}.
For every integer $m\ge1$, there is $C_m>0$ such that every
$v\in H^m(\Om;\R^n)$ solving the boundary value problem
\begin{equation*}
 \begin{cases}
 P_0v=f&\text{in }\Om,\\
 B_0v=g&\text{on }\Ga
 \end{cases}
\end{equation*}
satisfies
\begin{equation*}
 \norm v_{H^m(\Om)}\le C_m\bigl(
 \norm f_{H^{m-1}(\Om)}+
 \norm g_{H^{m-1/2}(\Ga)}+\norm v_{L^2(\Om)}\bigr).
\end{equation*}
The constant $C_m$ is independent of $v$.
\end{lemma}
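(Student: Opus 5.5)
The plan is the classical localization argument: partition of unity, freezing of coefficients, and a half-space model problem solved by Fourier transform in the tangential variables. The Shapiro--Lopatinskii condition enters only through the solvability, with estimates, of the resulting ODE in the normal variable. Fix a finite cover of $\overline\Om$ by interior balls and boundary charts in which $\Ga$ is flattened to $\{y_d=0\}$ with $\Om$ locally $\{y_d>0\}$. Take a subordinate partition of unity $\{\chi_k\}$. Each $\chi_kv$ solves $P_0(\chi_kv)=\chi_kf+[P_0,\chi_k]v$ and $B_0(\chi_kv)=\chi_kg$. The commutator is of order zero, so it costs only $\norm v_{H^{m-1}}$. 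This lower-order term is absorbed at the end by induction on $m$ together with the interpolation inequality $\norm v_{H^{m-1}}\le\epsilon\norm v_{H^m}+C_\epsilon\norm v_{L^2}$.

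\emph{Interior pieces.} Since $p(x,\xi)$ is invertible for $\xi\ne0$ (Definition~\ref{def:elliptic-operator}), its inverse is homogeneous of degree $-1$ and smooth on $\xi\ne0$. Freezing coefficients at the center of a small ball and using Plancherel gives
\begin{equation*}
 \norm{\chi v}_{H^m}\le C\bigl(\norm{P_0(\chi v)}_{H^{m-1}}+\norm{\chi v}_{L^2}\bigr).
\end{equation*}
The error from freezing is bounded by the oscillation of the coefficients on the ball times $\norm{\chi v}_{H^m}$. It is absorbed by taking the balls small enough.

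\emph{Boundary pieces.} This is the main obstacle. In flattened coordinates, freeze at $x_0\in\Ga$ and take the Fourier transform in $y'$ with dual variable $\eta'$. Ellipticity forces $A_\nu^0(x_0)=p(x_0,\nu)$ to be invertible. The model problem therefore becomes the ODE system
\begin{equation*}
 \frac{dw}{dr}=i\,p(x_0,\nu)^{-1}p(x_0,\eta')w+\hat F,\qquad r>0,
\end{equation*}
with boundary data $B_0(x_0)w(0)=\hat g$; here I normalize signs as in \eqref{eq:elliptic-normal-model}.

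For $\eta'\ne0$ the coefficient matrix has no real eigenvalues, again by ellipticity. By Proposition~\ref{prop:geometry} its stable subspace $E_-(\eta')$ has dimension $n/2$. The Shapiro--Lopatinskii condition says exactly that $B_0(x_0)\colon E_-(\eta')\to\R^{n/2}\otimes\mathbb C$ is injective, hence bijective since $\operatorname{rank}B_0=n/2$.

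Homogeneity and compactness of $|\eta'|=1$ then give a uniform bound on the inverse. The decaying solution is then written explicitly through the spectral projectors onto the stable and unstable subspaces, and the boundary condition is used to fix its $E_-(\eta')$ component. This yields
\begin{equation*}
 \langle\eta'\rangle^{m}\norm{w}_{L^2_r}+\norm{\partial_r^m w}_{L^2_r}\lesssim\text{(data)}+\langle\eta'\rangle^{m-1/2}|\hat g|.
\end{equation*}
The half-power $\langle\eta'\rangle^{-1/2}$ comes from $\norm{e^{-c|\eta'|r}}_{L^2_r}\sim|\eta'|^{-1/2}$. Low frequencies $|\eta'|\le1$ are controlled by $\norm v_{L^2}$ plus the data.

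Integrating in $\eta'$ gives the half-space estimate with the $H^{m-1/2}(\Ga)$ trace norm. Perturbing back to variable coefficients and absorbing the errors as in the interior case completes the boundary estimate.

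\emph{Higher $m$.} Alternatively, one can prove the case $m=1$ as above and then bootstrap. For this, apply the $m=1$ estimate to tangential difference quotients of $\chi_kv$. This controls all derivatives containing at most one normal derivative. The remaining normal derivatives are then recovered algebraically from the equation,
\begin{equation*}
 \partial_{y_d}v=A_\nu^{-1}\Bigl(f+Q_0v-\sum_{\text{tangential}}A_j\partial_jv\Bigr),
\end{equation*}
using non-characteristicity exactly as in \eqref{eq:one-dimensional-recovery}. Induction on $m$ closes the argument.

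Summing over all charts and absorbing the commutator terms gives the stated estimate, with $C_m$ depending only on $\Om$, the coefficients, and $m$.
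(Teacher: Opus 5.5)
The paper does not prove this lemma; it quotes the estimate from Chazarain--Piriou (Chapter~5, Theorem~4.4(ii) and Section~6.7(e)). Your proposal instead reconstructs the classical proof, and the outline is sound:
\begin{itemize}
\item partition of unity and freezing of coefficients;
\item tangential Fourier transform, reducing to a half-line ODE in which the Shapiro--Lopatinskii condition becomes bijectivity of $B_0(x_0)$ on the stable subspace $E_-(\eta')$;
\item uniform inverse bounds from homogeneity and compactness of $|\eta'|=1$, with $|\eta'|\le1$ treated separately;
\item absorption of $\norm v_{H^{m-1}}$ by interpolation.
\end{itemize}
The citation buys brevity and the generality of the textbook theorem. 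Your route makes visible where each hypothesis enters: ellipticity for the interior estimate and the spectral splitting, and the rank and complementing conditions for the boundary inverse. Your second variant ($m=1$, then tangential derivatives, then normal recovery through the invertible normal matrix) also matches the recovery mechanism the paper uses later.

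There is one point to repair. You invoke Proposition~\ref{prop:geometry} to get $\dim E_-(\eta')=n/2$, but that proposition relies on symmetrizability, which is not among the lemma's hypotheses. You don't need it. For $\eta'\ne0$ tangential and real $\lambda$, $\det\bigl(p(x_0,\eta')-\lambda p(x_0,\nu)\bigr)=\det p(x_0,\eta'-\lambda\nu)\ne0$. Hence the real matrix $p(x_0,\nu)^{-1}p(x_0,\eta')$ has no real eigenvalues, so they occur in conjugate pairs with exactly $n/2$ (counted with multiplicity) in each open half-plane. Combined with $\operatorname{rank}B_0=n/2$ from Definition~\ref{def:complementing}, this gives the bijectivity you need.

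A minor simplification: since $v\in H^m$ is assumed, difference quotients are unnecessary. You can apply the $m=1$ estimate directly to $\partial_{y'}^\alpha(\chi_kv)$ for $|\alpha|\le m-1$, and handle the commutators with $A_j$ and $B_0$ as lower-order terms.
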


We now prove that $B_0$ satisfies the Shapiro--Lopatinskii condition. 
\begin{proposition}\label{prop:elliptic}
Under the hypotheses of Theorem~\ref{thm:smooth}, the boundary
operator $B_0=L_--KL_+$ satisfies the elliptic complementing
condition for $P_0$.
\end{proposition}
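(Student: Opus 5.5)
The plan is an energy identity for the model ODE \eqref{eq:elliptic-normal-model}, built on the symmetrizer $M_0(x)$. It pits the boundary flux against the dissipation in Assumption~\ref{ass:boundary}.

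First, the rank condition. Since $\Pi^{-1}$ is invertible, the rows of $L_-$ are independent. Hence $B_0=L_--KL_+$ has rank $n_-$. By Proposition~\ref{prop:geometry}, $n_-=n/2$, so $\operatorname{rank}B_0=n/2$ as Definition~\ref{def:complementing} requires.

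Next, fix $x\in\Ga$ and a nonzero tangential $\eta$, i.e. $\eta\cdot\nu(x)=0$. Let $w:[0,\infty)\to\mathbb C^n$ solve \eqref{eq:elliptic-normal-model}, decay at infinity, and satisfy $B_0w(0)=0$. Write $A_\nu^0=p(x,\nu)$, which is invertible by Assumption~\ref{ass:noncharacteristic}. The system then reads $w'=i(A_\nu^0)^{-1}p(x,\eta)w$, a constant-coefficient linear ODE. Decaying solutions therefore decay exponentially, together with their derivatives.

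The key computation uses the frozen matrix $M_0=M_0(x)$. Both $M_0A_\nu^0$ and $M_0p(x,\eta)$ are real symmetric by \eqref{eq:symmetrizer}. Define $E(r)=w^*M_0A_\nu^0w$, which is real. Then
\begin{equation*}
 E'(r)=2\operatorname{Re}\bigl(w^*M_0A_\nu^0w'\bigr)
 =2\operatorname{Re}\bigl(i\,w^*M_0p(x,\eta)w\bigr)=0,
\end{equation*}
because $w^*M_0p(x,\eta)w$ is real for a Hermitian (real symmetric) matrix. So $E$ is constant, and since $w\to0$ we get $E\equiv0$, in particular $E(0)=0$.

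The condition $B_0w(0)=0$ says exactly that $\zeta_-=K\zeta_+$ for $\zeta=\Pi^{-1}w(0)$. Assumption~\ref{ass:boundary} is stated for real vectors. Because $K$, $\Pi$ and $M_0A_\nu^0$ are real, it extends to complex vectors by splitting $w(0)$ into real and imaginary parts. This gives
\begin{equation*}
 0=E(0)\ge\delta|\zeta_+|^2,
\end{equation*}
so $\zeta_+=0$, hence $\zeta_-=K\zeta_+=0$, and therefore $w(0)=0$. Uniqueness for the linear ODE then yields $w\equiv0$, which verifies Definition~\ref{def:complementing}.

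The main point needing care is the passage to complex vectors. The quadratic form is Hermitian and splits as $Q(\operatorname{Re}w)+Q(\operatorname{Im}w)$, because the cross terms cancel for a real symmetric matrix. The constraint $\zeta_-=K\zeta_+$ also holds separately for the real and imaginary parts, since $K$ and $\Pi$ are real. The rest is routine, and notably ellipticity is not even needed beyond the invertibility of $A_\nu^0$.
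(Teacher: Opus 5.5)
Your proof is correct and is essentially the paper's argument. The paper multiplies the model ODE by $w^*M_0$ and integrates the real part over $(0,\infty)$ to obtain $w(0)^*M_0A_\nu^0w(0)=0$, which is your observation that $E(r)$ is constant and tends to zero; it then concludes from Assumption~\ref{ass:boundary}, applied to the real and imaginary parts, and from ODE uniqueness, as you do.

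One wording point: the rank claim needs all rows of $\Pi^{-1}$ to be independent (equivalently $B_0\Pi=(I_{n_-},-K)$), not just the rows of $L_-$, but your premise that $\Pi^{-1}$ is invertible already gives this.
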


\begin{proof}
Since $B_0\Pi=(I_{n_-},-K)$ and $\Pi$ is invertible,
Proposition~\ref{prop:geometry} gives
$\operatorname{rank}B_0=n_-=n/2$.
Fix $x\in\Ga$ and a nonzero vector $\eta$ satisfying $\eta \cdot \nu(x)=0$.
Let $w$ solve \eqref{eq:elliptic-normal-model} with $B_0(x)w(0)=0$.
Multiplying the equation in \eqref{eq:elliptic-normal-model} by $w^*M_0(x)$, taking real parts,
and integrating over $(0,\infty)$, we have
\begin{equation*}
 0=-\int_0^\infty\operatorname{Re}\left(
 w^*M_0(x)p(x,\nu(x))\frac{dw}{dr}\right)\dd r
 =\frac12 w(0)^*M_0(x)p(x,\nu(x))w(0).
\end{equation*}
Assumption~\ref{ass:boundary}, applied to the real and imaginary
parts of $w(0)$, yields $w(0)=0$.
Since $p(x,\nu(x))$ is invertible, uniqueness for the 
ordinary differential equation gives $w\equiv0$.
\end{proof}


\subsection{Spatial recovery and decay}

The elliptic boundary estimate first gives an equivalence between time derivatives and full spatial regularity. We then use
this equivalence to close the nonlinear energy estimate.
For a solution $U\in\mathcal X^s([0,T])$, set
\begin{equation*}
 h(t):=\norm{U(t,\cdot)}_{H^s(\Om)},\qquad 0\le t\le T.
\end{equation*}
Fix $0<R\le1$ so that $h(t)\le R$ keeps $U$ in a neighborhood of $0$.
The constants $C_R$ in the nonlinear estimates below are fixed
since $R$ is fixed. 

\begin{lemma}[Time derivatives and spatial regularity]
\label{lem:time-space-equivalence}
Under the domain and coefficient hypotheses of
Theorem~\ref{thm:smooth}, there exist $r_*>0$ and $C\ge1$ such
that every solution $U\in\mathcal X^s([0,T])$ of \eqref{eq:system}
with $\sup_{0\le t\le T}\norm{U(t,\cdot)}_{H^s(\Om)}\le r_*$ satisfies
\begin{equation}\label{eq:time-equivalence}
 C^{-1}\norm{U(t,\cdot)}_{H^s(\Om)}^2
 \le\sum_{k=0}^s\norm{\pa_t^kU(t,\cdot)}_{L^2(\Om)}^2
 \le C\norm{U(t,\cdot)}_{H^s(\Om)}^2
 \qquad(0\le t\le T).
\end{equation}
The constant $C$ is independent of $U$ and $T$.
\end{lemma}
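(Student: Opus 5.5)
The upper bound is the easy direction. I would use the equation recursively, $\pa_tU=Q(U)-\sum_jA_j(U)U_{x_j}$, and differentiate it in time $k-1$ times. This expresses $\pa_t^kU$ as a polynomial in spatial derivatives of $U$ of total order at most $k$, with smooth coefficients in $U$. The Moser-type product and composition estimates of Appendix~\ref{app:nonlinear} (valid since $s>d/2+1$) then give
\begin{equation*}
 \norm{\pa_t^kU}_{H^{s-k}}\le C_R\norm U_{H^s}\qquad(0\le k\le s),
\end{equation*}
which, read at $L^2$ level, is the right inequality in \eqref{eq:time-equivalence}.

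For the lower bound I would prove by downward induction on $k$, from $k=s$ to $k=0$, that
\begin{equation*}
 \norm{\pa_t^kU}_{H^{s-k}}\le C\sum_{l=k}^s\norm{\pa_t^lU}_{L^2}+C r_*\norm U_{H^s}.
\end{equation*}
The case $k=s$ is trivial. For $k<s$, apply $\pa_t^k$ to the equation and to the boundary condition, and write the result as a frozen problem
\begin{equation*}
 P_0(\pa_t^kU)=-\pa_t^{k+1}U+f_k,\qquad B_0(\pa_t^kU)=g_k .
\end{equation*}
The interior remainder is
\begin{equation*}
 f_k=\pa_t^k\Bigl(\bigl(Q(U)-Q_0U\bigr)-\sum_j\bigl(A_j(U)-A_j^0\bigr)U_{x_j}\Bigr).
\end{equation*}
The boundary remainder $g_k$ comes from $\pa_t^k\bigl(\Phi(x,L_+U)-K L_+U\bigr)$; here I use that $\Pi$ is independent of $U$, so $\zeta_\pm=L_\pm U$ is linear. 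Both remainders are at least quadratic. By the product estimates and the trace theorem, I expect
\begin{equation*}
 \norm{f_k}_{H^{s-k-1}(\Om)}+\norm{g_k}_{H^{s-k-1/2}(\Ga)}\le C\norm U_{H^s}\sum_{l\le s}\norm{\pa_t^lU}_{H^{s-l}} .
\end{equation*}
Lemma~\ref{lem:elliptic-estimate} applies with $m=s-k\ge1$, thanks to Propositions~\ref{prop:geometry} and~\ref{prop:elliptic}. It gives
\begin{equation*}
 \norm{\pa_t^kU}_{H^{s-k}}\le C\bigl(\norm{\pa_t^{k+1}U}_{H^{s-k-1}}+\norm{\pa_t^kU}_{L^2}\bigr)+C\norm U_{H^s}\sum_l\norm{\pa_t^lU}_{H^{s-l}} .
\end{equation*}
Inserting the induction hypothesis and summing, the quadratic terms are bounded by $Cr_*\sum_l\norm{\pa_t^lU}_{H^{s-l}}$. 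Once $r_*$ is small, these are absorbed into the left-hand side. The case $k=0$ then yields $\norm U_{H^s}^2\le C\sum_k\norm{\pa_t^kU}_{L^2}^2$.

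The delicate step is the remainder $g_k$, specifically getting the trace estimate in $H^{s-k-1/2}(\Ga)$ at top order. The term $\Phi_{\zeta_+}(x,L_+U)-K$ multiplies $\pa_t^kL_+U$, whose trace lies only in $H^{s-k-1/2}$. So the coefficient must be handled in a multiplier space on $\Ga$, for instance via the trace of an $H^{s}$ function, which lies in $H^{s-1/2}(\Ga)\subset L^\infty$ and is an algebra since $s-1/2>(d-1)/2$. Lower-order products must be controlled by the tame estimates. If trace-space products become awkward, I would instead extend the nonlinear boundary expression into $\Om$ as the function $\pa_t^k(\Phi(x,L_+U)-KL_+U)$ in $H^{s-k}(\Om)$ and take its trace. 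This reduces everything to interior Moser estimates in $H^{s-k}(\Om)$ from the appendix, where the smallness factor $\norm U_{H^s}$ appears naturally.

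I would also check the mild point that the time derivatives have enough regularity ($U\in\mathcal X^s$) for the boundary condition to be differentiated $k\le s-1$ times in the trace sense. For the $k=s$ level, no boundary condition is needed.
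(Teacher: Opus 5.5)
Your proposal is correct and follows the paper's proof essentially step for step. The upper bound comes from the recursive time-derivative estimates of Proposition~\ref{prop:app-time}, and the lower bound comes from applying Lemma~\ref{lem:elliptic-estimate} to $\pa_t^kU$ with $m=s-k$, with interior and boundary remainders of size $C_Rh^2$ (your treatment of $\pa_t^kN$ via products in $H^{s-\ell-1/2}(\Ga)$ is exactly Proposition~\ref{prop:app-boundary}(i), and your interior-extension fallback would also work), followed by downward recursion in $k$ and absorption for small $r_*$.
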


\begin{proof}
Proposition~\ref{prop:app-time} gives
\begin{equation*}
 \sum_{k=0}^s\norm{\pa_t^kU(t,\cdot)}_{H^{s-k}(\Om)}\le C_Rh(t).
\end{equation*}
For the other direction, we separate the linear feedback from
its nonlinear remainder:
\begin{equation*}
 \zeta_-=K(x)\zeta_++N(x,\zeta_+),
\end{equation*}
where $N(x,\zeta_+)=\Phi(x,\zeta_+)-K(x)\zeta_+$.
Since $K=K(x)$ is independent of time, differentiating the boundary
condition $k$ times in $t$ gives
\begin{equation}\label{eq:time-boundary}
 B_0\pa_t^kU=\pa_t^kN(x,\zeta_+)\qquad(0\le k\le s),
\end{equation}
where $B_0U:=L_-(x)U-K(x)L_+(x)U$ is the linear boundary operator. Moreover, Proposition~\ref{prop:app-boundary} gives
\begin{equation}\label{eq:time-boundary-errors}
 \norm{(\pa_t^kN(\cdot,\zeta_+))(t,\cdot)}_{H^{s-k-1/2}(\Ga)}
 \le C_Rh^2(t)\qquad(0\le k<s).
\end{equation}

To apply the elliptic estimate, we write the equation as
\begin{equation} \label{eq:U}
 U_t+\sum_{j=1}^dA_j(0)U_{x_j}=Q_0U+\mathcal R(U),
\end{equation}
where $\mathcal R(U)=Q(U)-Q_0U-\sum_{j=1}^d(A_j(U)-A_j(0))U_{x_j}$.
Recalling the definition of the elliptic operator $P_0v=\sum_{j=1}^dA_j(0)v_{x_j}-Q_0v$ and differentiating \eqref{eq:U} in time,
we obtain, for $0\le k<s$,
\begin{equation*}
 P_0\pa_t^kU=-\pa_t^{k+1}U+\pa_t^k\mathcal R(U),\qquad
 \norm{(\pa_t^k\mathcal R(U))(t,\cdot)}_{H^{s-k-1}(\Om)}\le C_Rh^2(t),
\end{equation*}
where the estimate follows from Proposition~\ref{prop:app-interior}.
Lemma~\ref{lem:elliptic-estimate}, with $m=s-k$, now gives
\begin{equation*}
 \begin{aligned}
 \norm{\pa_t^kU(t,\cdot)}_{H^{s-k}(\Om)}
 &\le C\bigl(\norm{P_0\pa_t^kU(t,\cdot)}_{H^{s-k-1}(\Om)}
       +\norm{B_0\pa_t^kU(t,\cdot)}_{H^{s-k-1/2}(\Ga)}
       +\norm{\pa_t^kU(t,\cdot)}_{L^2(\Om)}\bigr)\\
 &\le C\bigl(\norm{\pa_t^{k+1}U(t,\cdot)}_{H^{s-k-1}(\Om)}
       +\norm{\pa_t^kU(t,\cdot)}_{L^2(\Om)}\bigr)+C_Rh^2(t),
 \end{aligned}
\end{equation*}
using \eqref{eq:time-boundary}--\eqref{eq:time-boundary-errors} in the
second line. At $k=s-1$, this reads
\begin{equation*}
 \norm{\pa_t^{s-1}U(t,\cdot)}_{H^1(\Om)}
 \le C\bigl(\norm{\pa_t^sU(t,\cdot)}_{L^2(\Om)}
       +\norm{\pa_t^{s-1}U(t,\cdot)}_{L^2(\Om)}\bigr)+C_Rh^2(t).
\end{equation*}
Substituting this into the estimate for $k=s-2$ and continuing downwards yields
\begin{equation*}
 \norm{\pa_t^kU(t,\cdot)}_{H^{s-k}(\Om)}
 \le C\sum_{j=k}^s\norm{\pa_t^jU(t,\cdot)}_{L^2(\Om)}+C_Rh^2(t)
 \qquad(0\le k<s).
\end{equation*}
Taking $k=0$ and applying the Cauchy--Schwarz inequality to the finite sum gives
\begin{equation*}
 h(t)\le C\left(\sum_{j=0}^s\norm{\pa_t^jU(t,\cdot)}_{L^2(\Om)}^2\right)^{1/2}
       +C_Rh^2(t).
\end{equation*}
Choosing $r_*$ sufficiently small gives $C_Rh^2(t)\le\frac12h(t)$.
This term can therefore be absorbed into the left-hand side, proving
the other inequality in \eqref{eq:time-equivalence}.
\end{proof}

Now we are in a position to prove the main result.
\begin{proof}[Proof of Theorem~\ref{thm:smooth}]
Let $U$ be the maximal solution given by Theorem~\ref{thm:local},
and let $0<T<T_{\max}$ be such that $h(t)\le r_*\le R$ on $[0,T]$,
keeping $R$ fixed.
Set $q(U)=Q(U)-Q_0U$. Proposition~\ref{prop:app-interior} gives
\begin{equation}\label{eq:time-equation}
 \pa_t^{k+1}U+\sum_jA_j(U)\pa_{x_j}\pa_t^kU=Q_0\pa_t^kU+G_k,
 \qquad \sum_{k=0}^s\norm{G_k(t,\cdot)}_{L^2(\Om)}\le C_Rh^2(t).
\end{equation}
Here $G_0=q(U)$, while for $k\ge1$,
\begin{equation*}
 G_k=\pa_t^kq(U)-\sum_j\sum_{\ell=1}^k
 \binom{k}{\ell}(\pa_t^\ell A_j(U))\pa_{x_j}\pa_t^{k-\ell}U.
\end{equation*}
Define
\begin{equation*}
 \Es(U(t))=\sum_{k=0}^s\int_\Om
 (\pa_t^kU)^\top M(x,U)(\pa_t^kU)\dd x.
\end{equation*}
This energy is equivalent to $h^2(t)$ by
Lemma~\ref{lem:time-space-equivalence}.
For each $k$, differentiation, \eqref{eq:time-equation}, and integration
by parts using the symmetry in \eqref{eq:symmetrizer} give
\begin{equation}\label{eq:energy-identity}
 \begin{aligned}
 \frac{\dd}{\dd t}\int_\Om(\pa_t^kU)^\top M(\pa_t^kU)\dd x
 &=-\int_\Ga(\pa_t^kU)^\top MA_\nu(\pa_t^kU)\dd S\\
 &\quad+\int_\Om(\pa_t^kU)^\top\mathcal D(U)(\pa_t^kU)\dd x
 +2\int_\Om(\pa_t^kU)^\top MG_k\dd x,
 \end{aligned}
\end{equation}
where $\mathcal D(U)=\pa_t(M(x,U))+\sum_j\pa_{x_j}(M(x,U)A_j(U))
+MQ_0+Q_0^\top M$.
We estimate these three terms separately.

\noindent(i) For the interior quadratic term, write
\begin{equation*}
 \mathcal D(U)=\mathcal D(0)+(\mathcal D(U)-\mathcal D(0)),\qquad
 \mathcal D(0)=\sum_j(M_0A_j(0))_{x_j}+M_0Q_0+Q_0^\top M_0.
\end{equation*}
The difference is
\begin{equation*}
 \begin{aligned}
 \mathcal D(U)-\mathcal D(0)
 ={}&M_U(x,U)(U_t)
 +\sum_j\bigl(M_{x_j}(x,U)A_j(U)-M_{x_j}(x,0)A_j(0)\bigr)\\
 &+\sum_j(MA_j)_U(x,U)(U_{x_j})
 +(M(x,U)-M_0)Q_0+Q_0^\top(M(x,U)-M_0).
 \end{aligned}
\end{equation*}
Proposition~\ref{prop:app-time} and Lemma~\ref{lem:app-trace-interpolation}
therefore imply
\begin{equation*}
 \norm{(\mathcal D(U)-\mathcal D(0))(t,\cdot)}_{L^\infty(\Om)}
 \le C_R\bigl(\norm{U(t,\cdot)}_{L^\infty(\Om)}
 +\norm{\nabla U(t,\cdot)}_{L^\infty(\Om)}
 +\norm{U_t(t,\cdot)}_{L^\infty(\Om)}\bigr)\le C_Rh(t).
\end{equation*}
By \eqref{eq:interior}, $\mathcal D(U)\le-2\lambda_0M_0+C_Rh(t)\Id$.
Also $\norm{M(\cdot,U(t,\cdot))-M_0}_{L^\infty(\Om)}\le C_Rh(t)$.
Enlarging the fixed constant $C_R$ to include this difference gives
\begin{equation}\label{eq:interior-perturbation}
 \mathcal D(U)\le-2\lambda_0M(x,U)+C_Rh(t)\Id.
\end{equation}
Consequently,
\begin{equation*}
 \sum_{k=0}^s\int_\Om(\pa_t^kU)^\top\mathcal D(U)(\pa_t^kU)\dd x
 \le-2\lambda_0\Es(U(t))+C_Rh^3(t).
\end{equation*}
\noindent(ii) For the source remainders, the Cauchy--Schwarz inequality and \eqref{eq:time-equation} give
\begin{equation*}
 2\left|\sum_{k=0}^s\int_\Om(\pa_t^kU)^\top MG_k\dd x\right|
 \le C\left(\sum_{k=0}^s\norm{\pa_t^kU(t,\cdot)}_{L^2(\Om)}^2\right)^{1/2}
       \left(\sum_{k=0}^s\norm{G_k(t,\cdot)}_{L^2(\Om)}^2\right)^{1/2}
 \le C_Rh^3(t).
\end{equation*}

\noindent(iii) For the boundary term, apply Lemma~\ref{lem:inhom-flux} with
$g=\pa_t^kN(x,\zeta_+)$ for $0\le k\le s$ and sum over $k$:
\begin{equation*}
 \begin{aligned}
 -\sum_{k=0}^s\int_\Ga(\pa_t^kU)^\top MA_\nu(\pa_t^kU)\dd S
 &\le-c\sum_{k=0}^s\norm{\pa_t^k\zeta_+(t,\cdot)}_{L^2(\Ga)}^2
       +C\sum_{k=0}^s\norm{(\pa_t^kN(\cdot,\zeta_+))(t,\cdot)}_{L^2(\Ga)}^2\\
 &\le-c\sum_{k=0}^s\norm{\pa_t^k\zeta_+(t,\cdot)}_{L^2(\Ga)}^2
       +C_Rh^2(t)\norm{\pa_t^s\zeta_+(t,\cdot)}_{L^2(\Ga)}^2
       +C_Rh^4(t)\\
 &\le-\bigl(c-C_Rh^2(t)\bigr)
       \sum_{k=0}^s\norm{\pa_t^k\zeta_+(t,\cdot)}_{L^2(\Ga)}^2+C_Rh^4(t)\\
 &\le C_Rh^4(t).
 \end{aligned}
\end{equation*}
The second inequality follows from Proposition~\ref{prop:app-boundary}(i).
In the last step, we chose $C_R r_*^2\le c/2$ and discarded
the remaining dissipation.
Summing \eqref{eq:energy-identity} and using energy equivalence yields
\begin{equation*}
 \frac{\dd}{\dd t}\Es(U(t))
 \le-2\lambda_0\Es(U(t))+C_R\Es^{3/2}(U(t))+C_R\Es^2(U(t)).
\end{equation*}
Fix $0<\lambda<\lambda_0$. Reducing $r_*$ further makes the last
two terms at most $2(\lambda_0-\lambda)\Es(U(t))$, so
\begin{equation*}
 \Es(U(t))\le e^{-2\lambda t}\Es(U_0),\qquad
 t \in [0,T].
\end{equation*}

By Lemma~\ref{lem:time-space-equivalence} and the bounds on $M$,
$h(t)\le C_*e^{-\lambda t}h(0)$ on $[0,T]$, where $C_*\ge1$
is independent of $T$.
Choose $r_*\le\delta_0$ and $\varepsilon\le r_*/(2C_*)$.
If $h(0)\le\varepsilon$ and $T_1<T_{\max}$ were the first time
with $h(T_1)=r_*$, the preceding estimate $h(T_1)\le C_*e^{-\lambda T_1}h(0)$ would give
$h(T_1)\le r_*/2$, a contradiction.
Thus $h(t)\le r_*/2\le\delta_0$ on $[0,T_{\max})$.
Theorem~\ref{thm:local} gives $T_{\max}=\infty$, and
$h(t)\le C_*e^{-\lambda t}h(0)$ for all $t\ge0$.
\end{proof}

\begin{remark}\label{rem:time-regularization}
At $k=s$, \eqref{eq:time-equation} formally contains $\pa_t^{s+1}U$.
For $U\in\mathcal X^s([0,T])$, this derivative is understood in
distributions. The energy calculation
is justified by time and tangential regularization; see
\cite[Theorem~9.19 and its proof]{BGS2007}.
The same interpretation applies to the mixed derivatives in the next
section and will not be repeated.
\end{remark}

\section{Proof of Theorem~\ref{thm:cylinder}}\label{sec:cylinder-proof}

Once the principal operator is not elliptic, the spatial recovery
from  time derivatives in Section~\ref{sec:smooth-proof} is no longer
available. The cylinder geometry now
plays a role: both boundary faces have the fixed normal direction
$x_1$, and the remaining variables $x'$ are tangential and periodic.
Non-characteristicity allows us to control the full $H^s(\Om)$ norm
by time and tangential spatial derivatives.

Write $Z^\alpha=\pa_t^{\alpha_0}\pa_{x'}^{\alpha'}$, with $\alpha$ a multi-index in $(t,x').$ We denote the order of $\alpha$ by
$|\alpha|=\alpha_0+|\alpha'|$.
As in Section~\ref{sec:smooth-proof}, set
$h(t)=\norm{U(t,\cdot)}_{H^s(\Om)}$ and fix a positive constant $R \leq 1$ so that $h(t)\le R$.

\begin{lemma}[Time and tangential derivatives and spatial regularity]
\label{lem:cylinder-space-equivalence}
Under the hypotheses of Theorem~\ref{thm:cylinder}, there are positive constants
$r_*>0$ and $C\ge1$ such that every solution
$U\in\mathcal X^s([0,T])$ of \eqref{eq:system} with
$h(t)\le r_*$ satisfies
\begin{equation}\label{eq:cylinder-equivalence}
 C^{-1}h^2(t)\le\sum_{|\alpha|\le s}\norm{Z^\alpha U(t,\cdot)}_{L^2(\Om)}^2
 \le Ch^2(t).
\end{equation}
The constant $C$ is independent of $U$ and $T$.
\end{lemma}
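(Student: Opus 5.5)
The upper bound follows as in Lemma~\ref{lem:time-space-equivalence}. Proposition~\ref{prop:app-time}, or its mixed analogue for $Z^\alpha$, controls $\norm{\pa_t^kU}_{H^{s-k}(\Om)}$ by $C_Rh(t)$ for every $k\le s$. Since tangential derivatives $\pa_{x'}^{\alpha'}$ cost only spatial regularity, $\norm{Z^\alpha U}_{L^2(\Om)}\le\norm{\pa_t^{\alpha_0}U}_{H^{|\alpha'|}(\Om)}\le C_Rh(t)$ whenever $|\alpha|\le s$. The real work is the lower bound. Here I would recover normal derivatives from the equation, replacing the elliptic estimate by the invertibility of $A_1(U)$ near $U=0$ given by Assumption~\ref{ass:noncharacteristic}. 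Boundary conditions are not needed for this direction.

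Fix $|\alpha|\le s$ and $m\ge0$ with $|\alpha|+m\le s$. I would prove, by induction on the number $m$ of normal derivatives, that
\begin{equation*}
 \norm{\pa_{x_1}^mZ^\alpha U}_{L^2(\Om)}\le C\sum_{|\beta|\le |\alpha|+m}\norm{Z^\beta U}_{L^2(\Om)}+C_Rh^2(t).
\end{equation*}
The case $m=0$ is trivial. For $m\ge1$, solve the equation for the normal derivative:
\begin{equation*}
 U_{x_1}=A_1(0)^{-1}\Bigl(Q_0U-U_t-\sum_{j\ge2}A_j(0)U_{x_j}+\mathcal R(U)\Bigr),
\end{equation*}
with $\mathcal R(U)$ as in \eqref{eq:U}, but keeping all spatial derivatives of $U$, including $(A_1(U)-A_1(0))U_{x_1}$. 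Apply $\pa_{x_1}^{m-1}Z^\alpha$. The linear terms on the right are of the form $\pa_{x_1}^{m-1}Z^{\beta}U$ with $|\beta|\le|\alpha|+1$, so they carry one fewer normal derivative and the same total order. The induction hypothesis therefore bounds them. The nonlinear term $\pa_{x_1}^{m-1}Z^\alpha\mathcal R(U)$ has total order at most $s-1$ acting on $\mathcal R(U)$. I would bound it by $C_Rh^2(t)$ in $L^2$ using Proposition~\ref{prop:app-interior}, that is, the Moser-type estimate $\norm{\pa_t^k\mathcal R(U)}_{H^{s-k-1}}\le C_Rh^2$ already used in Section~\ref{sec:smooth-proof}. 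This works because $\pa_{x_1}^{m-1}\pa_{x'}^{\alpha'}$ is a spatial derivative of order $m-1+|\alpha'|\le s-\alpha_0-1$. Running the induction over $m$ and summing over all $(\alpha,m)$ with $|\alpha|+m\le s$ gives every derivative $\pa_t^k\pa_x^\gamma U$ with $k+|\gamma|\le s$, in particular all of $h(t)$. The result is
\begin{equation*}
 h(t)\le C\Bigl(\sum_{|\alpha|\le s}\norm{Z^\alpha U}_{L^2(\Om)}^2\Bigr)^{1/2}+C_Rh^2(t).
\end{equation*}
Choosing $r_*$ small so that $C_Rr_*\le\tfrac12$ lets the quadratic term be absorbed, which gives the lower bound in \eqref{eq:cylinder-equivalence}.

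The main obstacle is the bookkeeping in the inductive step. One must order the induction so that the right-hand side never requires a term with the same number of normal derivatives at the same total order; inducting on $m$ with the total order $|\alpha|+m$ fixed handles this. One must also check that the nonlinear remainder, which contains $(A_1(U)-A_1(0))\pa_{x_1}U$ with top-order normal derivatives, is still quadratically small in $h$ without circularity. Bounding it in terms of $h(t)$ itself, and not in terms of the quantity being recovered, avoids the circularity, and the absorption at the end closes the argument. Equivalently, one can invert $A_1(U)$ directly, since it is invertible for $|U|\le R$ small, and treat $A_1(U)^{-1}$ as a smooth coefficient.
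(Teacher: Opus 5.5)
Your proposal is correct and follows essentially the same argument as the paper's proof. The upper bound comes from Proposition~\ref{prop:app-time} combined with tangential differentiation. The lower bound comes from an induction on the number of normal derivatives via $A_1(0)^{-1}$, using the induction hypothesis for the terms carrying one extra time or tangential derivative and Proposition~\ref{prop:app-interior} for $\pa_{x_1}^{m-1}Z^\alpha\mathcal R(U)$, followed by absorbing the $C_Rh^2(t)$ term once $r_*$ is small.
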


\begin{proof}
Proposition~\ref{prop:app-time}, followed by tangential differentiation,
gives $\norm{Z^\alpha U(t,\cdot)}_{H^{s-|\alpha|}(\Om)}\le C_Rh(t)$
for $|\alpha|\le s$, which proves the upper bound.
To recover normal derivatives, write
\begin{equation*}
 A_1(0)U_{x_1}=-U_t-\sum_{j=2}^dA_j(0)U_{x_j}+Q_0U+\mathcal R(U),
\end{equation*}
where $\mathcal R$ is the remainder in Proposition~\ref{prop:app-interior}.
We prove by induction on $p$ that, whenever $p+|\alpha|\le s$,
\begin{equation}\label{eq:normal-induction}
 \norm{\pa_{x_1}^pZ^\alpha U(t,\cdot)}_{L^2(\Om)}
 \le C_R\left\{\left(\sum_{|\beta|\le s}
 \norm{Z^\beta U(t,\cdot)}_{L^2(\Om)}^2\right)^{1/2}+h^2(t)\right\}.
\end{equation}
For $p=0$, the left side is one term in the sum, so the estimate
holds without the remainder. Assume it holds at $p-1$ for all
admissible time and tangential derivatives. Applying
$\pa_{x_1}^{p-1}Z^\alpha$ to the equation gives
\begin{equation} \label{eq:ZU}
 A_1(0)\pa_{x_1}^pZ^\alpha U
 =-\pa_{x_1}^{p-1}Z^\alpha U_t
       -\sum_{j=2}^dA_j(0)\pa_{x_1}^{p-1}Z^\alpha U_{x_j}
 +Q_0\pa_{x_1}^{p-1}Z^\alpha U
       +\pa_{x_1}^{p-1}Z^\alpha\mathcal R(U).
\end{equation}
The first two terms contain $p-1$ normal derivatives and
$|\alpha|+1$ time or tangential derivatives. Their total order is
$p+|\alpha|\le s$, so the induction hypothesis applies. It also
applies to the lower-order $Q_0$ term. Since
$p-1+|\alpha|\le s-1$, Proposition~\ref{prop:app-interior} gives
\begin{equation*}
 \norm{(\pa_{x_1}^{p-1}Z^\alpha\mathcal R(U))(t,\cdot)}_{L^2(\Om)}
 \le C_Rh^2(t).
\end{equation*}
Multiplication of \eqref{eq:ZU} by $A_1^{-1}(0)$ therefore proves
\eqref{eq:normal-induction} at order $p$, after enlarging $C_R$.
After the finitely many steps $p\le s$, taking the index $\alpha_0$ of time derivatives to be $0$ covers
all spatial derivatives of total order at most $s$. Summing up all the inequalities gives
\begin{equation*}
 h(t)\le C\left(\sum_{|\alpha|\le s}\norm{Z^\alpha U(t,\cdot)}_{L^2(\Om)}^2\right)^{1/2}
       +C_Rh^2(t).
\end{equation*}
Choose $r_*\le R$ so that $C_R r_*\le1/2$. Then
\begin{equation*}
 \frac12h(t)\le h(t)-C_Rh^2(t)
 \le C\left(\sum_{|\alpha|\le s}\norm{Z^\alpha U(t,\cdot)}_{L^2(\Om)}^2\right)^{1/2}.
\end{equation*}
Squaring proves the lower bound in \eqref{eq:cylinder-equivalence}.
\end{proof}

\begin{proof}[Proof of Theorem~\ref{thm:cylinder}]
Fix $0<\lambda<\lambda_0$. Let $U$ be the maximal solution given by Theorem~\ref{thm:local},
and let $0<T<T_{\max}$ be such that $h(t)\le r_*\le R$ on $[0,T]$,
keeping $R$ fixed.
For $0\le m\le s$, define
\begin{equation*}
 \begin{aligned}
 E_m(U)&=\sum_{|\alpha|=m}\int_\Om
 (Z^\alpha U)^\top M(x,U)(Z^\alpha U)\dd x,\\
 D_m(U)&=\sum_{i=0}^1\sum_{|\alpha|=m}
 \norm{Z^\alpha\zeta_{i,+}(t,\cdot)}_{L^2(\Ga_i)}^2.
 \end{aligned}
\end{equation*}

For $|\alpha|\le s$, differentiating the equation gives
\begin{equation*}
 (Z^\alpha U)_t+\sum_jA_j(U)(Z^\alpha U)_{x_j}=Q_0Z^\alpha U+F_\alpha,
\end{equation*}
where
\begin{equation*}
 F_\alpha=Z^\alpha q(U)-\sum_j[Z^\alpha,A_j(U)]U_{x_j},
 \qquad q(U)=Q(U)-Q_0U.
\end{equation*}
The second estimate in Proposition~\ref{prop:app-interior}, applied
to $Z^\alpha$, gives
\begin{equation*}
 \sum_{|\alpha|\le s}\norm{F_\alpha(t,\cdot)}_{L^2(\Om)}\le C_Rh^2(t).
\end{equation*}
Differentiating $\int_\Om (Z^\alpha U)^\top M(Z^\alpha U)\dd x$ with respect
to time and integrating by parts, we obtain
\begin{equation*}
 \begin{aligned}
 \frac{\dd}{\dd t}\int_\Om (Z^\alpha U)^\top M(Z^\alpha U)\dd x
 &=-\sum_{i=0}^1\int_{\Ga_i}(Z^\alpha U)^\top MA_{\nu_i}(Z^\alpha U)\dd S\\
 &\quad+\int_\Om (Z^\alpha U)^\top\mathcal D(U)(Z^\alpha U)\dd x
 +2\int_\Om (Z^\alpha U)^\top MF_\alpha\dd x,
 \end{aligned}
\end{equation*}
since $\Ga=\{0,1\}\times\T^{d-1}=\Ga_0\cup\Ga_1$.
We first estimate the boundary term.
On $\Ga_i$, write the boundary graph as
\begin{equation*}
 \zeta_{i,-}=K_i(x')\zeta_{i,+}+N_i(x',\zeta_{i,+}),
\end{equation*}
where $N_i(x',\zeta_{i,+})=\Phi_i(x',\zeta_{i,+})-K_i(x')\zeta_{i,+}$.
For $1\le|\alpha|=m\le s$, differentiation gives
\begin{equation*}
 Z^\alpha\zeta_{i,-}=K_i(x')Z^\alpha\zeta_{i,+}
 +[Z^\alpha,K_i]\zeta_{i,+}+Z^\alpha N_i(x',\zeta_{i,+}).
\end{equation*}
Proposition~\ref{prop:app-boundary} gives
\begin{equation*}
 \sum_{i,|\alpha|=m}\norm{[Z^\alpha,K_i]\zeta_{i,+}(t,\cdot)}_{L^2(\Ga_i)}^2
 \le C_K\sum_{j<m}D_j,\quad
 \sum_{i,|\alpha|=m}\norm{Z^\alpha N_i(x',\zeta_{i,+})(t,\cdot)}_{L^2(\Ga_i)}^2
 \le C_Rh^2(t)\sum_{j=0}^sD_j.
\end{equation*}
The constant $C_K$ depends on the fixed $C^s$ bounds of $K_i$
and need not be small. The full nonlinear remainder is small
relative to the boundary derivative norms.
Applying Lemma~\ref{lem:inhom-flux} with
$g=[Z^\alpha,K_i]\zeta_{i,+}+Z^\alpha N_i(x',\zeta_{i,+})$ yields

\begin{equation*}
 -\sum_{i,|\alpha|=m}\int_{\Ga_i}
 (Z^\alpha U)^\top MA_{\nu_i}(Z^\alpha U)\dd S
 \le-cD_m+C_K\sum_{j<m}D_j+C_Rh^2(t)\sum_{j=0}^sD_j.
\end{equation*}

For the interior quadratic term, we directly use \eqref{eq:interior-perturbation} and obtain
\begin{equation*}
 \begin{aligned}
 \sum_{|\alpha|=m}\int_\Om (Z^\alpha U)^\top\mathcal D(U)(Z^\alpha U)\dd x
 &\le-2\lambda_0E_m(U(t))
       +C_Rh(t)\sum_{|\alpha|=m}\norm{Z^\alpha U}_{L^2(\Om)}^2\\
 &\le-2\lambda_0E_m(U(t))+C_Rh^3(t).
 \end{aligned}
\end{equation*}
For the source remainders, the Cauchy--Schwarz inequality gives
\begin{equation*}
 \begin{aligned}
 2\left|\sum_{|\alpha|=m}\int_\Om (Z^\alpha U)^\top MF_\alpha\dd x\right|
 &\le C\left(\sum_{|\alpha|=m}\norm{Z^\alpha U(t,\cdot)}_{L^2(\Om)}^2\right)^{1/2}
       \left(\sum_{|\alpha|=m}\norm{F_\alpha(t,\cdot)}_{L^2(\Om)}^2\right)^{1/2}\\
 &\le C_Rh^3(t).
 \end{aligned}
\end{equation*}
Combining these bounds with the boundary estimate yields, for $0\le m\le s$,
\begin{equation} \label{eq:m}
 E_m'+2\lambda_0E_m+cD_m
 \le C_K\sum_{j<m}D_j+C_Rh^2(t)\sum_{j=0}^sD_j+C_Rh^3(t).
\end{equation}
To absorb the lower-order boundary terms, choose $0<\theta<1$ and set
\begin{equation*}
 E(U)=\sum_{m=0}^s\theta^m E_m(U),\qquad
 D(U)=\sum_{m=0}^s\theta^m D_m(U).
\end{equation*}
For fixed $\theta$, uniform positivity of $M$ and
\eqref{eq:cylinder-equivalence} show that $E(U(t))$ is equivalent to $h^2(t)$. For each $0\le m\le s$, we multiply the energy inequality \eqref{eq:m}  by $\theta^m$; summing over $m$, we obtain
\begin{equation*}
 E'+2\lambda_0E+cD
 \le C_K\sum_{m=1}^s\theta^m\sum_{j<m}D_j+C_Rh^2(t)\sum_{m=0}^s\theta^m\sum_{j=0}^sD_j+\frac{C_R}{1-\theta}h^3(t).
\end{equation*}
Moreover, direct computation gives
\begin{equation*}
 \begin{aligned}
 \sum_{m=1}^s\theta^m\sum_{j<m}D_j
 &=\sum_{j=0}^{s-1}D_j\sum_{m=j+1}^s\theta^m
 \le\frac{\theta}{1-\theta}\sum_{j=0}^{s-1}\theta^jD_j
 \le\frac{\theta}{1-\theta}D,\\
 \sum_{m=0}^s\theta^m\sum_{j=0}^sD_j
 &=\frac{1-\theta^{s+1}}{1-\theta}\sum_{j=0}^sD_j
 \le\frac1{1-\theta}\sum_{j=0}^sD_j
 \le\frac{\theta^{-s}}{1-\theta}D.
 \end{aligned}
\end{equation*}
In the last inequality, we used the fact that $\theta^j \ge\theta^s $ for all
$0\le j\le s$, since $0<\theta<1$.
Finally, we have
\begin{equation*}
 E'+2\lambda_0E+cD
 \le\left(\frac{C_K\theta}{1-\theta}
 +\frac{C_R\theta^{-s}}{1-\theta}h^2(t)\right)D
 +\frac{C_R}{1-\theta}h^3(t).
\end{equation*}
First fix $0<\theta<1$ so that $C_K\theta/(1-\theta)\le c/4$.
Then choose $r_*$ sufficiently small that
$C_R\theta^{-s}r_*^2/(1-\theta)\le c/4$.
Since $h(t)\le r_*$, each of the two boundary terms on the
right is at most $cD/4$. Absorbing them leaves
$cD/2$ on the left.
For the interior remainder, energy equivalence gives
$h^2(t)\le C_\theta E(U(t))$, and hence
$C_Rh^3(t)/(1-\theta)\le C_{R,\theta}h(t)E(U(t))$.
We conclude that
\begin{equation}\label{eq:cylinder-weighted-energy}
 E'+2\lambda_0E+\frac c2D\le C_{R,\theta}h(t)E.
\end{equation}

Reduce $r_*$ once more so that
$C_{R,\theta}r_*\le2(\lambda_0-\lambda)$. By \eqref{eq:cylinder-weighted-energy}, we have
$E(U(t))\le e^{-2\lambda t}E(U_0)$.
By energy equivalence and the continuity and continuation argument
in the proof of Theorem~\ref{thm:smooth}, we obtain
$T_{\max}=\infty$ and
$h(t)\le C_*e^{-\lambda t}h(0)$ for all $t\ge0$.
\end{proof}

\section{Failure of a general \texorpdfstring{$C^1$}{C1} decay estimate}
\label{sec:c1-counterexample}

By Sobolev embedding, the stability results proved in the previous sections already imply
\begin{equation*}
 \norm{U(t,\cdot)}_{C^1(\overline\Om)}
 \le C e^{-\lambda t}\norm{U_0}_{H^s(\Om)},
 \qquad s>d/2+1.
\end{equation*}
The issue is whether or not the initial norm on the right can be replaced
by $\norm{U_0}_{C^1(\overline\Om)}.$ Under the assumptions in this paper, the replacement is not valid in general, even for linear systems. The following example shows that the $C^1$ norm of the solution can grow rapidly compared to the $C^1$ norm of the initial data,
even when the $H^s$ norm of the initial data is arbitrarily small. 

Let $\Om=B_3(0)=\{x\in\R^2:|x|<3\}$. Fix $\gamma>0$ and set
\begin{equation*}
 A_1=\begin{pmatrix}1&0\\0&-1\end{pmatrix},\qquad
 A_2=\begin{pmatrix}0&1\\1&0\end{pmatrix},\qquad
 A_\nu=\nu_1A_1+\nu_2A_2.
\end{equation*}
On a boundary chart write $\nu=(\cos\theta,\sin\theta)$. The incoming
and outgoing variables are
\begin{equation*}
 \zeta_-=-\sin(\theta/2)U_1+\cos(\theta/2)U_2,\qquad
 \zeta_+=\cos(\theta/2)U_1+\sin(\theta/2)U_2.
\end{equation*}
They correspond to the eigenvalues $-1$ and $1$ of $A_\nu$.
We consider the real-valued system
\begin{equation}\label{eq:c1-counterexample-system}
 \begin{cases}
 U_t+A_1U_{x_1}+A_2U_{x_2}=-\gamma U,
   &(t,x)\in(0,\infty)\times\Om,\\
 \zeta_-=0,
   &(t,x)\in(0,\infty)\times\Ga,\\
 U(0,x)=U_0(x),&x\in\Om.
 \end{cases}
\end{equation}
Both matrices are symmetric, and
$A_1^2=A_2^2=\Id$, $A_1A_2+A_2A_1=0$.
Hence $A_\nu^2=\Id$ and the boundary is non-characteristic.
The condition $\zeta_-=0$ is the zero graph $\Phi=0$ of
Section~\ref{sec:introduction}.
With $G=\R^2$ and $M=\Id$, the left-hand side of
\eqref{eq:interior} is $-2\gamma\Id$.
Moreover, $\zeta_-=0$ implies $A_\nu U=U$, so the outgoing flux is
$U^\top A_\nu U=|U|^2$.
Thus Assumptions~\ref{ass:sym} and \ref{ass:boundary} hold with
$\lambda_0=\gamma$ and $\delta=1$.

This system satisfies the hypotheses of Theorem~\ref{thm:smooth},
but its propagation need not be bounded in the $C^1$ norm.
\begin{proposition}\label{prop:c1-counterexample}
For every integer $s>2$, there are nonzero initial data
$\widetilde U_0^\ell\in C_c^\infty(\Om;\R^2)$, satisfying the
compatibility conditions of every order, whose solutions of
\eqref{eq:c1-counterexample-system} obey
\begin{equation*}
 \norm{\widetilde U_0^\ell}_{H^s(\Om)}\longrightarrow0,
 \qquad
 \frac{\norm{\widetilde U^\ell(1,\cdot)}_{C^1(\overline\Om)}}
      {\norm{\widetilde U_0^\ell}_{C^1(\overline\Om)}}
 \longrightarrow\infty
 \quad\text{as }\ell\downarrow0.
\end{equation*}
In particular, there are no constants $C,\lambda,\varepsilon>0$
such that
\begin{equation*}
 \norm{U(t,\cdot)}_{C^1(\overline\Om)}
 \le C e^{-\lambda t}\norm{U_0}_{C^1(\overline\Om)}
 \qquad(t\ge0)
\end{equation*}
for all smooth compatible initial data with $\norm{U_0}_{H^s(\Om)}\le\varepsilon$.
\end{proposition}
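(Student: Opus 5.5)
The plan is to reduce the system to a scalar wave equation and use focusing of an incoming circular wave in two space dimensions. The boundary plays no role up to time $t=1$, so the computation is a whole-space one. The algebraic starting point is that $A_1^2=A_2^2=\Id$ and $A_1A_2+A_2A_1=0$, which give
\begin{equation*}
 (\pa_t+A_1\pa_{x_1}+A_2\pa_{x_2})(\pa_t-A_1\pa_{x_1}-A_2\pa_{x_2})=(\pa_t^2-\Delta)\Id .
\end{equation*}
Let $v$ be a scalar solution of $v_{tt}=\Delta v$ in $\R^2$, and fix a vector $e\in\R^2$. Then
\begin{equation*}
 U(t,x)=e^{-\gamma t}\bigl(\pa_t-A_1\pa_{x_1}-A_2\pa_{x_2}\bigr)(v\,e)
\end{equation*}
solves $U_t+A_1U_{x_1}+A_2U_{x_2}=-\gamma U$ in the whole plane. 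This ansatz avoids choosing an incoming eigenvector of $A_{x/|x|}$ pointwise. That choice would involve $\theta/2$, which is not single-valued around a circle.

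For $0<\ell<1/4$, I will take
\begin{equation*}
 v(0,x)=f_\ell(x)=\ell^{b}\,\psi\bigl((|x|-1)/\ell\bigr),\qquad v_t(0,x)=-\pa_rf_\ell(x),
\end{equation*}
with $b=s+2$. This is an approximately incoming radial profile supported in the annulus $1-\ell<|x|<1+\ell$. The initial datum is then $\widetilde U_0^\ell=\bigl(-\pa_rf_\ell-A\cdot\nabla f_\ell\bigr)e$, which lies in $C_c^\infty(B_2(0))$. Finite propagation speed (speed $1$) keeps the whole-space solution supported in $B_3(0)$ for $t\le1$, and it vanishes near $\Ga$ for $t\le1$. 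Hence it coincides there with the solution of \eqref{eq:c1-counterexample-system}. Since the data vanish near $\Ga$, all recursively computed time derivatives vanish on $\Ga$, so compatibility holds at every order.

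Next come the elementary scaling bounds. We have $\norm{\widetilde U_0^\ell}_{C^1}\sim\ell^{b-2}$ and $\norm{\widetilde U_0^\ell}_{H^s}\lesssim\ell^{b-1-s}\ell^{1/2}=\ell^{3/2}\to0$. The target is a lower bound on one derivative at the focal point, for instance $|\pa_tU(1,0)|\gtrsim\ell^{b-2}\ell^{-1/2}$. This would give the ratio $\gtrsim\ell^{-1/2}\to\infty$; the factor $e^{-\gamma}$ is harmless. Since $\pa_tU$ is built from second derivatives of $v$, and $v$ is radial, it suffices to bound $|\pa_t^2v(1,0)|$ from below. The other second derivatives at the origin are determined by $\Delta v=v_{tt}$ and radial symmetry.

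The main obstacle is this lower bound at the focus. I would use the Poisson formula
\begin{equation*}
 v(t,0)=\frac1{2\pi}\pa_t\int_{|y|<t}\frac{f(y)}{\sqrt{t^2-|y|^2}}\dd y+\frac1{2\pi}\int_{|y|<t}\frac{g(y)}{\sqrt{t^2-|y|^2}}\dd y
\end{equation*}
and differentiate it twice in $t$, or equivalently work with a Hankel transform. The data are concentrated within $\ell$ of the sphere $|y|=1=t$. Substituting $|y|=1+\ell\sigma$ turns $(t^2-|y|^2)^{-1/2}$ into $\ell^{-1/2}(2(\tau-\sigma))^{-1/2}$, where $t=1+\ell\tau$. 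Each $t$-derivative then costs $\ell^{-1}$. The result is $\pa_t^2v(1,0)=\ell^{b-2-1/2}\,I+O(\ell^{b-2})$, where $I$ is a fixed nonzero Abel-type integral of $\psi$ and $\psi'$. Checking $I\ne0$ is the delicate point. If necessary I will replace $\psi$ by a suitable derivative of $\psi$, or evaluate at a nearby time $1+\ell\tau_0$, to guarantee nondegeneracy.

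Finally, suppose the estimate $\norm{U(t)}_{C^1}\le Ce^{-\lambda t}\norm{U_0}_{C^1}$ held for all smooth compatible data with $\norm{U_0}_{H^s(\Om)}\le\varepsilon$. For $\ell$ small, $\widetilde U_0^\ell$ is such a datum, and the estimate at $t=1$ would bound the ratio by $Ce^{-\lambda}$. This contradicts the growth of the ratio.
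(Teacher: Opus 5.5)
Your factorization $(\pa_t+A\cdot\nabla)(\pa_t-A\cdot\nabla)=(\pa_t^2-\Delta)\Id$, the finite-speed and compatibility argument, and the scaling bounds are fine. The gap is in the focusing step, and it comes from a sign.

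With $v(0)=f_\ell$ and $v_t(0)=-\pa_rf_\ell$, the radial profile is \emph{outgoing}, not incoming: a profile $F(r-t)$ satisfies $v_t=-v_r$. Equivalently, $\widetilde U_0^\ell=-\pa_rf_\ell\,(\Id+A_\omega)e$ with $\omega=x/|x|$ lies pointwise in the $+1$ eigenspace of $A_\omega$, and that eigenspace is transported in the direction $+\omega$.

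In your rescaled Poisson computation at $t=1+\ell\tau$, put $\Psi(\tau)=\int_0^\infty\psi(\tau-u)(2u)^{-1/2}\dd u$. The displacement term contributes $\ell^{b-1/2}\Psi'(\tau)$. The velocity term $g=-\pa_rf_\ell$ contributes $-\ell^{b-1/2}\Psi'(\tau)$, because the Abel transform commutes with $\pa_\tau$. So your $I$ vanishes identically, for every profile and every $\tau$. Neither fallback can restore it: replacing $\psi$ by a derivative or moving to a nearby time leaves the cancellation intact.

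One order further, $v(1+\ell\tau,0)=\frac12\ell^{b+1/2}\Psi(\tau)+O(\ell^{b+3/2})$, hence $v_{tt}(1,0)=O(\ell^{b-3/2})=o(\ell^{b-2})$. At the focus the ratio therefore tends to $0$, not $\infty$, as expected for outgoing data.

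The repair is to take $v_t(0)=+\pa_rf_\ell$, i.e.\ $\widetilde U_0^\ell=\pa_rf_\ell(\Id-A_\omega)e$, which is genuinely incoming. The two contributions then add: $v(1+\ell\tau,0)\approx2\ell^{b-1/2}\Psi'(\tau)$ and $v_{tt}(1+\ell\tau,0)\approx2\ell^{b-5/2}\Psi'''(\tau)$.

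Nondegeneracy then still needs an argument, but it is easy. $\Psi$ vanishes for $\tau\le-1$ and is positive for $\tau>-1$ since $\psi\ge0$. So $\Psi$ is not a quadratic polynomial, and $\Psi'''\not\equiv0$. Shifting the profile moves a point $\tau_0$ with $\Psi'''(\tau_0)\ne0$ to $t=1$.

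For comparison, the paper avoids both the polarization issue and the nondegeneracy check. It uses unpolarized data $(f_\ell,0)^\top$ supported in $1-2\ell<|x|<1-\ell$ and observes that $e^{\gamma t}\widetilde U_2^\ell$ solves the wave equation with zero initial displacement. Only the velocity term of the Poisson formula then appears, so there is nothing to cancel. After two integrations by parts, $-(\widetilde U_2^\ell)_{x_2}(1,0)=e^{-\gamma}(2\pi)^{-1}\int f_\ell\,\pa_{y_2}^2(1-|y|^2)^{-1/2}\dd y$. This is the integral of a nonnegative function and is bounded below by $c\ell^{s-3/2}$ directly.
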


\begin{proof}
Choose a nonzero nonnegative $\phi\in C_c^\infty((1,2))$.
For $0<\ell<1/4$, define
\begin{equation*}
 f_\ell(x)=\ell^s\phi\!\left(\frac{1-|x|}{\ell}\right),
 \qquad
 \widetilde U_0^\ell=(f_\ell,0)^\top.
\end{equation*}
We denote the solution of \eqref{eq:c1-counterexample-system} with this initial data by $\widetilde U^\ell=(\widetilde U_1^\ell,\widetilde U_2^\ell)^\top$.

Note that $\widetilde U_0^\ell$ is supported in the annulus
$1-2\ell<|x|<1-\ell$,
so all  compatibility conditions hold for any order.
Each spatial differentiation
costs at most a factor $\ell^{-1}$, so derivatives of order
$k\le s$ are $O(\ell^{s-k})$. The largest of these bounds occurs
at $k=s$, where $\ell^{s-s}=1$.
Since the support has area $O(\ell)$, integrating the squared
derivatives gives an $O(\ell)$ bound for the squared $H^s$ norm.
Taking the square root, we obtain
\begin{equation}\label{eq:c1-initial-bound}
 \norm{\widetilde U_0^\ell}_{H^s(\Om)}\le C\ell^{1/2},
 \qquad
 \norm{\widetilde U_0^\ell}_{C^1(\overline\Om)}\le C\ell^{s-1}.
\end{equation}

Let $v=e^{\gamma t}\widetilde U_2^\ell$.
Direct computation gives
\begin{equation*}
 v_{tt}-\Delta v=0,\qquad v(0,x)=0,\qquad
 v_t(0,x)=-(f_\ell)_{x_2}(x).
\end{equation*}
The speed of propagation is one and the initial data of $v$ are supported in $B_{1-\ell}(0)$.
Thus the whole-space solution is supported in
$\overline B_{1-\ell+t}(0)\subset B_2(0)$ for $0\le t\le1$;
in particular, $\operatorname{supp}v(1,\cdot)
\subseteq\overline B_{2-\ell}(0)$.
Since $\Om=B_3(0)$, it vanishes near $\Ga$. Consequently, for $0\le t\le1$, the solution coincides on $\Om$ with the solution of the Cauchy problem in $\R^2$ with the same initial data. Hence we can use the Poisson formula \cite[Section~2.4.1(c), p.~74]{Evans2010} for the solution of the wave equation in $\R^2$ to compute $v$ and its derivatives at $(t,x)=(1,0)$:
\begin{align*}
 -v_{x_2}(1,0)
 =\frac1{2\pi}\int_{B_1(0)}
   \frac{(f_\ell)_{y_2y_2}(y)}{\sqrt{1-|y|^2}}\dd y
 =\frac{\ell^s}{2}\int_{1-2\ell}^{1-\ell}
   \frac{r(2+r^2)\phi((1-r)/\ell)}{(1-r^2)^{5/2}}\dd r,
\end{align*}
where we used integration by parts and
\begin{equation*}
 \partial_{y_2}^2(1-|y|^2)^{-1/2}
 =(1-|y|^2)^{-3/2}+3y_2^2(1-|y|^2)^{-5/2};
\end{equation*}
there are no boundary terms because $f_\ell$ is compactly supported
in $B_1(0)$. Substituting $r=1-\ell z$ and returning to $\widetilde U_2^\ell$, we obtain
\begin{equation}\label{eq:c1-amplification}
\begin{aligned}
 -(\widetilde U_2^\ell)_{x_2}(1,0)
 =\frac{e^{-\gamma}}2\ell^{s-3/2}
   \int_1^2
   \frac{(1-\ell z)(2+(1-\ell z)^2)\phi(z)}
        {(z(2-\ell z))^{5/2}}\dd z
 \ge c\ell^{s-3/2},
\end{aligned}
\end{equation}
where $c>0$ is independent of sufficiently small $\ell$.
Combining \eqref{eq:c1-initial-bound} and \eqref{eq:c1-amplification}
yields
\begin{equation*}
 \frac{\norm{\widetilde U^\ell(1,\cdot)}_{C^1(\overline\Om)}}
      {\norm{\widetilde U_0^\ell}_{C^1(\overline\Om)}}
 \ge c\ell^{-1/2}\longrightarrow\infty.
\end{equation*}
The ratio tends to infinity as $\ell\downarrow0$, which completes the proof.
\end{proof}

\begin{remark}
Although exponential $C^1$ decay can be obtained in 1-D
\cite{BC2016}, it fails in general in multi-D, essentially because
of  loss of regularity in wave propagation \cite{IRS2021}.
By contrast, in 1-D, d'Alembert's formula gives $C^1$ solutions from
$C^1$ initial data.
\end{remark}

\section{Conclusion}\label{sec:conclusion}
In this paper, we have proved global existence and exponential $H^s$ stability
near the zero equilibrium for multi-D quasilinear hyperbolic
systems with nonlinear boundary feedback, under the assumptions that the boundary is non-characteristic
and there are strict interior and boundary Lyapunov
inequalities for the linearized systems. The proof combines energy estimates with elliptic spatial
recovery on smooth bounded domains, or tangential estimates and
normal recovery on flat periodic cylinders. Finally, a counterexample shows
that these assumptions do not imply an exponential $C^1$ decay
estimate controlled by the initial $C^1$ norm.

Future work includes extending this stability principle to
non-constant steady states, where the linearized system has spatially
varying coefficients.  A further direction is to relax the strict interior Lyapunov
inequality using observability estimates \cite{BLR1992}
or operator-weighted Lyapunov functionals \cite{BaiHayat2026}.

\appendix
\section{Nonlinear estimates}\label{app:nonlinear}

We first recall standard tools from nonlinear analysis and then apply them
to prove the interior estimates in Propositions~\ref{prop:app-time}
and~\ref{prop:app-interior} and the boundary estimates in
Propositions~\ref{prop:app-boundary-derivatives} and~\ref{prop:app-boundary}.
These estimates are used in Sections~\ref{sec:smooth-proof}
and~\ref{sec:cylinder-proof}.
Throughout, $s>d/2+1$ is an integer, and $\Om$ is either the smooth
bounded domain of Theorem~\ref{thm:smooth} or the periodic cylinder
of Theorem~\ref{thm:cylinder}. $\Ga$ is the boundary of $\Om.$ For the cylinder, we also use the notation $\Ga=\Ga_0\cup\Ga_1$ for the union of the periodic faces $\{i\}\times\T^{d-1}, i=0,1$. For multi-indices $\alpha=(\alpha_1,\ldots,\alpha_m),\beta=(\beta_1,\ldots,\beta_m)\in\mathbb N_0^m$, we set
$|\alpha|=\sum\nolimits_j\alpha_j$ and write $\beta\le\alpha$ if
$\beta_j\le\alpha_j$ for every component $j$.

We recall three standard Sobolev estimates from classical texts.
We adapt them to the domains and boundaries considered here by standard
extension and localization where needed, and we omit the details.
By \cite[Theorems~4.12 and~7.39, Remark~7.45]{AdamsFournier2003} we obtain the
following embedding and trace estimates.
\begin{lemma}[Embedding and traces]
\label{lem:app-trace-interpolation}
Let $s>d/2+1$ be an integer. Then
\begin{equation*}
 \norm v_{W^{1,\infty}(\Om)}\le C\norm v_{H^s(\Om)},
 \qquad
 \norm w_{L^\infty(\Om)}\le C\norm w_{H^{s-1}(\Om)}.
\end{equation*}
For every integer $1\le r\le s$,
\begin{equation*}
 \norm{v|_{\Ga}}_{H^{r-1/2}(\Ga)}
 \le C\norm v_{H^r(\Om)}.
\end{equation*}
The constants are independent of $v$ and $w$.
\end{lemma}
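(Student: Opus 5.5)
The plan is to reduce both estimates to classical results on $\R^d$ (and on the half-space, for the trace) by extension and localization. For the cylinder $(0,1)\times\T^{d-1}$ we first pass to the periodic setting, where the same tools apply.

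\emph{Embeddings.} Since $\Om$ is a bounded $C^\infty$ domain, or a periodic cylinder with flat boundary, there is a bounded linear extension operator $E:H^m(\Om)\to H^m(\R^d)$ for every $m\le s$. On the cylinder we extend across $x_1=0,1$ by a higher-order reflection, keep periodicity in $x'$, and then use a cutoff in $x_1$. The first bound then follows from the Sobolev embedding $H^{s-1}(\R^d)\hookrightarrow L^\infty(\R^d)$, which holds because $s-1>d/2$. Applying it to $w$ gives $\norm w_{L^\infty(\Om)}\le \norm{Ew}_{L^\infty}\le C\norm{Ew}_{H^{s-1}}\le C\norm w_{H^{s-1}(\Om)}$. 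Applying it to $v$ and to each $\pa_{x_j}v\in H^{s-1}(\Om)$ gives the $W^{1,\infty}$ bound. One can equally justify this via Fourier analysis, using $\int(1+|\xi|^2)^{-(s-1)}\dd\xi<\infty$ together with Cauchy--Schwarz.

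\emph{Traces.} For the trace estimate, cover $\Ga$ by finitely many charts that flatten the boundary, and take a subordinate partition of unity. On the cylinder this step is unnecessary, since the faces $\Ga_i$ are already flat. Each localized piece becomes a function on the half-space $\R^{d-1}\times(0,\infty)$ with the same $H^r$ norm up to constants, because $C^\infty$ diffeomorphisms and multiplication by smooth cutoffs are bounded on $H^r$. It therefore suffices to prove $\norm{u(\cdot,0)}_{H^{r-1/2}(\R^{d-1})}\le C\norm u_{H^r(\R^d_+)}$. We extend $u$ to $\R^d$ and use the Fourier characterization. Writing $\hat u(\xi',0)=(2\pi)^{-1}\int\hat u(\xi',\xi_d)\dd\xi_d$ and applying Cauchy--Schwarz gives
\begin{equation*}
 |\hat u(\xi',0)|^2\le C\int(1+|\xi|^2)^r|\hat u|^2\dd\xi_d\int(1+|\xi'|^2+\xi_d^2)^{-r}\dd\xi_d .
\end{equation*}
The last integral equals $c_r(1+|\xi'|^2)^{1/2-r}$ when $r\ge1$. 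Multiplying by $(1+|\xi'|^2)^{r-1/2}$ and integrating in $\xi'$ yields the claim. Summing over the charts and transporting back to $\Ga$ gives the estimate with a constant depending only on $\Om$, $r$, and $s$.

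\emph{Main point.} The only non-routine step is the existence of the extension operators with uniform bounds for all integer orders up to $s$. This is Stein's or Calder\'on's extension for smooth bounded domains, and the reflection construction for the flat periodic faces. I would simply cite \cite[Theorems~4.12 and~7.39, Remark~7.45]{AdamsFournier2003} for this and for the localization. All constants are independent of $v$ and $w$ by linearity.
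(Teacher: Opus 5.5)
Your proposal is correct and follows the paper's route. The paper gives no proof beyond citing \cite[Theorems~4.12 and~7.39, Remark~7.45]{AdamsFournier2003} and saying the results transfer to $\Om$ and $\Ga$ by standard extension and localization, which is the reduction you carry out, with the Fourier proofs of $H^{s-1}\hookrightarrow L^\infty$ (using $s-1>d/2$) and of the half-space trace bound filled in. One small imprecision: on the cylinder your reflected and cut-off extension lies in $H^m(\R\times\T^{d-1})$ rather than $H^m(\R^d)$, but the same computation with Fourier series in $x'$ gives both the embedding and the trace estimate there.
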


The following multiplication estimate follows from
\cite[Theorem~8.3.1]{Hormander1997}. This estimate will be frequently used to control the nonlinear remainders.
\begin{lemma}[Multiplication]
\label{lem:app-multiplication}
Let $s>d/2+1$ be an integer, and set $(S,m)=(\Om,d)$ or
$(\Ga,d-1)$. If $a,b,r\ge0$, $r\le\min\{a,b\}$, and
$a+b-r>m/2$, then
\begin{equation*}
 \norm{fg}_{H^r(S)}\le C\norm f_{H^a(S)}\norm g_{H^b(S)}.
\end{equation*}
The boundary estimate also holds on each periodic face $S=\Ga_i$.
\end{lemma}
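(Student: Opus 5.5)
The multiplication estimate is a statement about products in Sobolev spaces on $\Om$, on $\Ga$, or on a periodic face $\Ga_i$. The plan is to reduce it to the whole-space (or torus) product estimate of \cite[Theorem~8.3.1]{Hormander1997} by extension and localization. That theorem asserts: for $s_1,s_2\ge r\ge0$ with $s_1+s_2-r>m/2$, one has $\norm{fg}_{H^{r}(\R^m)}\le C\norm f_{H^{s_1}(\R^m)}\norm g_{H^{s_2}(\R^m)}$. Taking $s_1=a$, $s_2=b$ gives exactly the hypotheses $r\le\min\{a,b\}$ and $a+b-r>m/2$ of the lemma. So the work lies in transferring this estimate to each of our sets $S$ with constants independent of $f,g$.

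\emph{Case $S=\Om$, a bounded $C^\infty$ domain.} Take a universal (Stein-type) extension operator $\mathcal E$, bounded $H^\sigma(\Om)\to H^\sigma(\R^d)$ for all $\sigma\ge0$ simultaneously. Since $(\mathcal Ef)(\mathcal Eg)$ restricts to $fg$ on $\Om$ and restriction has norm at most one, we get
\begin{equation*}
 \norm{fg}_{H^r(\Om)}\le\norm{(\mathcal Ef)(\mathcal Eg)}_{H^r(\R^d)}\le C\norm{\mathcal Ef}_{H^a(\R^d)}\norm{\mathcal Eg}_{H^b(\R^d)}\le C\norm f_{H^a(\Om)}\norm g_{H^b(\Om)}.
\end{equation*}
For the cylinder $(0,1)\times\T^{d-1}$, extend in $x_1$ only: reflect across $x_1=0,1$ using a higher-order reflection and multiply by a cutoff. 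This gives functions on $\R\times\T^{d-1}$, or on $\T^d$ after periodizing in $x_1$ on a larger period. The product estimate on $\T^m$ follows either from the same Fourier convolution argument as Hörmander's proof, with sums in place of integrals, or from a partition of unity on $\T^m$ and the $\R^m$ estimate.

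\emph{Case $S=\Ga$, a compact $C^\infty$ manifold of dimension $d-1$.} Fix a finite atlas $\{(O_k,\kappa_k)\}$ and a subordinate partition of unity $\{\chi_k\}$, and choose $\tilde\chi_k\in C_c^\infty(O_k)$ with $\tilde\chi_k=1$ on $\operatorname{supp}\chi_k$. Define $\norm u_{H^\sigma(\Ga)}$ as the sum over $k$ of $\norm{(\chi_ku)\circ\kappa_k^{-1}}_{H^\sigma(\R^{d-1})}$; this norm is equivalent to any other standard choice. Then $\chi_kfg=(\chi_kf)(\tilde\chi_kg)$, and applying the $\R^{d-1}$ estimate in each chart gives the claim. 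This needs the bound
\begin{equation*}
 \norm{(\tilde\chi_kg)\circ\kappa_k^{-1}}_{H^b(\R^{d-1})}\le C\norm g_{H^b(\Ga)},
\end{equation*}
which is the standard independence of the norm from the atlas and cutoffs: smooth multipliers and diffeomorphisms preserve $H^b$ for every real $b\ge0$. The periodic faces $\Ga_i=\{i\}\times\T^{d-1}$ are flat tori, so the torus version from the first case applies directly.

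\emph{Main obstacle.} This lies in the non-integer indices, since the lemma is used with half-integer orders such as $H^{s-k-1/2}(\Ga)$. Extension operators and chart changes must therefore be bounded on fractional Sobolev spaces, and for the periodic case the proof must work on $\T^m$. I would handle the fractional indices by the universal extension operator, or equivalently by interpolating its integer-order bounds, and by invariance of $H^\sigma$ under diffeomorphisms, obtained by interpolation between integer orders. The torus case can be handled by rerunning Hörmander's Fourier convolution proof with sums, which carries over verbatim since it uses only Peetre's inequality and Cauchy--Schwarz on the frequency side.
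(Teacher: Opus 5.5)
Your proposal is correct and follows the paper's route. The paper simply invokes \cite[Theorem~8.3.1]{Hormander1997} and transfers it to $\Om$, $\Ga$ and the periodic faces by ``standard extension and localization,'' omitting the details. You supply exactly those details: extension operators, charts with a partition of unity, the torus version via Fourier sums, and interpolation for the fractional orders. They are sound.
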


\begin{remark}\label{rem:app-multiplication-cases}
We use the following choices.
\begin{enumerate}[(i)]
\item Let $(S,m)=(\Om,d)$. For integers $r,\ell\ge0$ with $r+\ell\le s-1$,
take $a=s-1-\ell$ and $b=r+\ell$ to obtain
\begin{equation}\label{eq:app-product-interior}
 \norm{fg}_{H^r(\Om)}
 \le C\norm f_{H^{s-1-\ell}(\Om)}\norm g_{H^{r+\ell}(\Om)}.
\end{equation}
Here $a+b-r=s-1>d/2$.
\item Let $(S,m)=(\Ga,d-1)$. For integers $1\le k\le s-1$ and $0\le\ell\le k-1$,
take $r=s-k-1/2$, $a=s-\ell-1/2$, and $b=s-k+\ell-1/2$ to obtain
\begin{equation}\label{eq:app-product-boundary-lower}
 \norm{fg}_{H^{s-k-1/2}(\Ga)}
 \le C\norm f_{H^{s-\ell-1/2}(\Ga)}
       \norm g_{H^{s-k+\ell-1/2}(\Ga)}.
\end{equation}
Here $a,b\ge r$ and $a+b-r=s-1/2>(d-1)/2$.
\item Let $(S,m)=(\Ga,d-1)$. For integers $1\le\ell\le s-1$,
take $r=0$, $a=s-\ell-1/2$, and $b=\ell-1/2$ to obtain
\begin{equation}\label{eq:app-product-boundary-top}
 \norm{fg}_{L^2(\Ga)}
 \le C\norm f_{H^{s-\ell-1/2}(\Ga)}\norm g_{H^{\ell-1/2}(\Ga)}.
\end{equation}
Here $a,b\ge1/2$ and $a+b=s-1>(d-1)/2$.
\end{enumerate}
\end{remark}

The composition estimates below control coefficient perturbations and
quadratic remainders. They are standard consequences of
\cite[Corollary~6.4.5 and Theorem~8.5.1]{Hormander1997}.
\begin{lemma}[Composition estimates]\label{lem:app-moser}
Let $s>d/2+1$ be an integer. Set $r=s$ for $S=\Om$, and
$r=s-1/2$ for $S=\Ga$ or a periodic face $S=\Ga_i$.
Let $F(x,u)$ be smooth with $F(x,0)=0$.
If $u\in H^r(S)$ satisfies
$\norm u_{L^\infty(S)}\le R$, then
\begin{equation*}
 \norm{F(\cdot,u(\cdot))}_{H^r(S)}\le C_R\norm u_{H^r(S)}.
\end{equation*}
If also $F_u(x,0)=0$, then
\begin{equation*}
 \norm{F(\cdot,u(\cdot))}_{H^r(S)}
 \le C_R\norm u_{L^\infty(S)}\norm u_{H^r(S)}.
\end{equation*}
Here $C_R>0$ depends on $R$ and is independent of $u$.
\end{lemma}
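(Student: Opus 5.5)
The plan is a Moser-type argument: use the chain rule for integer-order derivatives and the Gagliardo--Nirenberg inequality for the resulting products. The fractional boundary case $r=s-1/2$ needs an extra layer, and I expect it to be the main obstacle. Throughout, I reduce to Euclidean pieces. On $\Om$, I use a bounded extension operator $H^r(\Om)\to H^r(\R^d)$ that also preserves $L^\infty$ bounds up to a constant; for instance, a reflection-type extension followed by truncation in $u$, which is harmless since only values $|u|\le CR$ occur. On $\Ga$ or a periodic face $\Ga_i$, I use a finite partition of unity subordinate to coordinate charts. Multiplying by a smooth cutoff and changing coordinates preserves the hypotheses $F(x,0)=0$ and $F_u(x,0)=0$, after redefining $F$ with the chart. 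It therefore suffices to prove the estimates for functions on $\R^m$ (or $\T^{m}$) with $m=d$ or $m=d-1$.

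\textbf{First estimate, integer order $r=s$.} For a multi-index $\gamma$ with $|\gamma|\le s$, Fa\`a di Bruno's formula writes $\pa^\gamma[F(x,u)]$ as a finite sum of terms
\begin{equation*}
(\pa_x^{a}\pa_u^{b}F)(x,u)\,\pa^{\gamma_1}u\cdots\pa^{\gamma_b}u,
\qquad |a|+|\gamma_1|+\cdots+|\gamma_b|=|\gamma|,\quad |\gamma_i|\ge1.
\end{equation*}
The coefficient is bounded in $L^\infty$ by $C_R$ because $|u|\le R$. There are two kinds of terms.
\begin{enumerate}[(a)]
\item For $b=0$, the term is $(\pa_x^aF)(x,u)$. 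Since $\pa_x^aF(x,0)=0$, the mean value theorem gives $|(\pa_x^aF)(x,u)|\le C_R|u|$, so its $L^2$ norm is at most $C_R\norm u_{L^2}$.
\item For $b\ge1$, set $k=\sum|\gamma_i|\le|\gamma|$. H\"older's inequality with exponents $p_i=2k/|\gamma_i|$ and the Gagliardo--Nirenberg inequality $\norm{\pa^{\gamma_i}u}_{L^{p_i}}\le C\norm u_{L^\infty}^{1-|\gamma_i|/k}\norm{D^ku}_{L^2}^{|\gamma_i|/k}$ give the bound $C\norm u_{L^\infty}^{b-1}\norm u_{H^k}\le C_R\norm u_{H^s}$.
\end{enumerate}
Summing over $\gamma$ proves the first estimate for $S=\Om$.

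\textbf{Second estimate from the first.} Assume also $F_u(x,0)=0$. I write
\begin{equation*}
F(x,u)=G(x,u)u,\qquad G(x,u)=\int_0^1F_u(x,\theta u)\dd\theta .
\end{equation*}
Here $G$ is smooth and matrix-valued with $G(x,0)=0$, hence $\norm{G(\cdot,u)}_{L^\infty}\le C_R\norm u_{L^\infty}$. The tame product estimate $\norm{fg}_{H^r}\le C(\norm f_{L^\infty}\norm g_{H^r}+\norm f_{H^r}\norm g_{L^\infty})$ is proved by the same Leibniz/Gagliardo--Nirenberg argument. Combining it with the first estimate applied to $G$, namely $\norm{G(\cdot,u)}_{H^r}\le C_R\norm u_{H^r}$, gives
\begin{equation*}
\norm{F(\cdot,u)}_{H^r}\le C_R\norm u_{L^\infty}\norm u_{H^r},
\end{equation*}
as claimed.

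\textbf{Fractional order on the boundary, $r=s-1/2$.} Write $r=(s-1)+\tfrac12$ and use the norm equivalence $\norm v_{H^r}\simeq\norm v_{H^{s-1}}+\sum_{|\gamma|=s-1}|\pa^\gamma v|_{H^{1/2}}$, where $|\cdot|_{H^{1/2}}$ is the Gagliardo seminorm on the $(d-1)$-dimensional chart. The $H^{s-1}$ part is handled by the integer case. For the top part, I expand $\pa^\gamma F(x,u)$ by Fa\`a di Bruno as above and estimate the $H^{1/2}$ seminorm of each product with a fractional Leibniz rule: the difference quotient falls on one factor at a time. The coefficient $(\pa_x^a\pa_u^bF)(x,u)$ is Lipschitz in $(x,u)$, so its difference is bounded by $C_R(|h|+|u(x+h)-u(x)|)$. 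The derivative factors are placed in the fractional Gagliardo--Nirenberg spaces $\dot W^{\sigma,p}$ by interpolating between $L^\infty$ and $\dot H^{r}$.

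This bookkeeping, checking that the H\"older exponents close with total homogeneity exactly $r$, is the main obstacle. If it becomes unwieldy, I would instead invoke the paralinearization result of \cite[Theorem~8.5.1]{Hormander1997} directly: for $r>m/2$ it gives $\norm{F(\cdot,u)-F(\cdot,0)}_{H^r}\le C_R\norm u_{H^r}$ for real $r$. Here $r=s-1/2>(d-1)/2$ holds, and it also gives $u\in L^\infty$ on $\Ga$. The second estimate for $S=\Ga$ or $S=\Ga_i$ then follows from the factorization $F=G(x,u)u$ exactly as in the integer case, using the fractional tame product estimate of \cite[Corollary~6.4.5]{Hormander1997}.
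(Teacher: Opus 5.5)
Your proposal is correct. For the interior case it does more than the paper, which gives no argument at all: it simply declares both bounds, on $\Om$ and on $\Ga$ or $\Ga_i$, to be standard consequences of H\"ormander's Corollary~6.4.5 and Theorem~8.5.1.

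Your Fa\`a di Bruno plus H\"older/Gagliardo--Nirenberg argument is a self-contained proof of the case $r=s$. It also shows why the constant depends only on the $L^\infty$ bound $R$. The factorization $F=G(x,u)u$ with $G(x,0)=0$, combined with the tame product estimate, is a clean way to get the quadratic bound from the linear one.

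For $r=s-1/2$, however, your fractional Leibniz sketch is not carried out. You end up relying on exactly the citations the paper uses, so there you gain nothing over it.

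If you want the boundary case to be elementary too, you can reduce it to your integer interior case by a lifting:
\begin{enumerate}
\item[(a)] Let $\tilde u$ be the componentwise harmonic extension of $u$; on the cylinder, use zero Dirichlet data on the other face. By elliptic regularity, $\norm{\tilde u}_{H^s(\Om)}\le C\norm u_{H^{s-1/2}(\Ga)}$. By the maximum principle, $\sup_\Om|\tilde u|\le\norm u_{L^\infty(\Ga)}$, since $|\tilde u|$ is subharmonic and continuous up to the boundary.
\item[(b)] Extend $F$ off the boundary as $\chi(x)F(\pi(x),u)$. Here $\pi$ is the nearest-point projection onto $\Ga$, and $\chi$ is supported in a tubular neighbourhood with $\chi=1$ near $\Ga$. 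On the cylinder, simply take $F(x',u)$. This extension preserves $F(x,0)=0$ and $F_u(x,0)=0$.
\item[(c)] The trace of $\chi F(\pi(\cdot),\tilde u)$ is $F(\cdot,u)$. Applying the trace estimate $H^s(\Om)\to H^{s-1/2}(\Ga)$ to it, your $r=s$ bounds yield both boundary inequalities.
\end{enumerate}
This avoids both the fractional bookkeeping and the citation.
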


We now apply these tools to the equation. The next estimate controls
the upper bounds in the norm equivalences of
Lemmas~\ref{lem:time-space-equivalence} and~\ref{lem:cylinder-space-equivalence}.
\begin{proposition}[Estimates for time derivatives]\label{prop:app-time}
Let $U\in\mathcal X^s([0,T])$ solve the equation in \eqref{eq:system},
with $Q(0)=0$, and take values in a fixed closed ball contained in $G$.
At any fixed time, put $h(t)=\norm{U(t,\cdot)}_{H^s(\Om)}\le R$. Then
\begin{equation}\label{eq:app-time-derivatives}
 \norm{\pa_t^kU(t,\cdot)}_{H^{s-k}(\Om)}\le C_Rh(t)
 \qquad(0\le k\le s).
\end{equation}
Here $C_R>0$ is a constant depending on $R$ and independent of $U$.
\end{proposition}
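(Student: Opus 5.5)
We argue by induction on $k$, using the equation to express each time derivative through spatial derivatives of lower time order. For $k=0$ the bound is the definition of $h(t)$. For $k\ge1$, write the equation as
\begin{equation*}
 \pa_tU=-\sum_{j=1}^dA_j(U)U_{x_j}+Q(U),
\end{equation*}
and apply $\pa_t^{k-1}$. By the Leibniz rule,
\begin{equation*}
 \pa_t^kU=-\sum_{j=1}^d\sum_{\ell=0}^{k-1}\binom{k-1}{\ell}
 \pa_t^\ell\bigl(A_j(U)\bigr)\,\pa_{x_j}\pa_t^{k-1-\ell}U
 +\pa_t^{k-1}\bigl(Q(U)\bigr),
\end{equation*}
and we estimate the right-hand side in $H^{s-k}(\Om)$. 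The induction hypothesis is that $\norm{\pa_t^jU}_{H^{s-j}}\le C_Rh$ for all $j\le k-1$.

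The factor $\pa_{x_j}\pa_t^{k-1-\ell}U$ lies in $H^{s-k+\ell}$ with norm at most $C_Rh$ by the induction hypothesis. For the coefficient factor, the case $\ell=0$ gives $A_j(U)=A_j(0)+(A_j(U)-A_j(0))$. Lemma~\ref{lem:app-moser} bounds the second piece in $H^s$ by $C_Rh$, and Lemma~\ref{lem:app-trace-interpolation} controls its $L^\infty$ norm. The product with a function in $H^{s-k}$ is then handled by \eqref{eq:app-product-interior} with $\ell=0$ (coefficient in $H^{s-1}$, $r=s-k$), while the constant part $A_j(0)$ is trivial. This requires $s-k\le s-1$, which holds for $k\ge1$.

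For $\ell\ge1$, the chain rule (Faà di Bruno) expands $\pa_t^\ell(A_j(U))$ as a sum of terms $A_j^{(p)}(U)[\pa_t^{\ell_1}U,\dots,\pa_t^{\ell_p}U]$ with $\ell_1+\dots+\ell_p=\ell$. Each factor $\pa_t^{\ell_i}U$ lies in $H^{s-\ell_i}$ by induction, since $\ell_i\le k-1$. Repeated use of Lemma~\ref{lem:app-multiplication} places such a product in $H^{s-1-(\ell-1)}$ or a comparable space. Then \eqref{eq:app-product-interior}, with the index shifted appropriately ($f$ in $H^{s-1-\ell'}$, $g$ in $H^{r+\ell'}$, $r=s-k$), bounds the full product by $C_Rh$ times powers of $h\le R$, all absorbed into $C_R$. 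The source term $\pa_t^{k-1}Q(U)$ is handled the same way: since $Q(0)=0$, the leading term $Q_U(U)\pa_t^{k-1}U$ is bounded using $\norm{\pa_t^{k-1}U}_{H^{s-k+1}}\le C_Rh$.

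The main obstacle is the index bookkeeping in the Faà di Bruno products. We must check, for every product of $p$ time-differentiated factors, that the Sobolev indices satisfy the hypotheses of Lemma~\ref{lem:app-multiplication}, namely $r\le\min\{a,b\}$ and $a+b-r>d/2$. The approach is to place the most-differentiated factor in its natural space $H^{s-\ell_i}$ and all others in $H^{s-1}\subset L^\infty$ (Lemma~\ref{lem:app-trace-interpolation}), which always works since $s-1>d/2$. Because $h\le R\le1$, the higher powers of $h$ that appear are bounded by $h$ itself, so the final bound is linear in $h(t)$, as claimed in \eqref{eq:app-time-derivatives}.
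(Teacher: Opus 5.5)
Your overall scheme is sound, but the index bookkeeping you give for the Fa\`a di Bruno products does not work as stated. This is the step you yourself flag as the main obstacle.

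A non-maximal factor $\pa_t^{\ell_i}U$ is only known to lie in $H^{s-\ell_i}$. That space is contained in $H^{s-1}$ only when $\ell_i=1$. Once $s\ge5$, the partition $\ell=4=2+2$ is admissible, since $\ell\le k-1\le s-1$. It produces the product $(\pa_t^2U)(\pa_t^2U)$, and neither factor is in $H^{s-1}$. Falling back on $L^\infty$ bounds does not help either: the target space $H^{s-\ell}$ has $s-\ell\ge1$, and $L^\infty$ does not control derivatives landing on that factor.

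The correct count multiplies the factors successively, using Lemma~\ref{lem:app-multiplication} with
$a=s-(\ell_1+\cdots+\ell_{i-1})$, $b=s-\ell_i$, $r=s-(\ell_1+\cdots+\ell_i)$. Then $r\le\min\{a,b\}$, $r\ge s-\ell\ge1$, and $a+b-r=s>d/2$. So the product lies in $H^{s-\ell}$ with norm at most $C_Rh^p$. The coefficient $A_j^{(p)}(U)$ is a constant plus an $H^s$ function by Lemma~\ref{lem:app-moser}, and $H^s$ multiplies $H^{s-\ell}$. With this repair, $\pa_t^\ell(A_j(U))\in H^{s-\ell}\subset H^{s-1-\ell}$, and your use of \eqref{eq:app-product-interior} closes the induction.

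A smaller point concerns the source term at $k=1$. There it is $Q(U)$ itself, which is bounded by Lemma~\ref{lem:app-moser} using $Q(0)=0$, not by a ``leading term'' $Q_U(U)\pa_t^{k-1}U$.

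Compared with the paper, the route genuinely differs in how the coefficients are handled. The paper avoids Fa\`a di Bruno altogether. It carries the auxiliary bound \eqref{eq:app-coefficient-time}, for every smooth $F$, inside the same induction. It obtains that bound at order $k$ from $\pa_t^kF(U)=\pa_t^{k-1}(F_U(U)\pa_tU)$ and the Leibniz rule, applying the induction hypothesis to $F_U$. Consequently only the bilinear estimate \eqref{eq:app-product-interior} is ever needed, and \eqref{eq:app-coefficient-time} comes out as a by-product reused in Proposition~\ref{prop:app-interior}. Your route gives the coefficient bound in one stroke, in the slightly better space $H^{s-\ell}$. The price is the multilinear index count above, which has to be done correctly.
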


\begin{proof}
For each fixed smooth coefficient function $F$, we also establish
\begin{equation}\label{eq:app-coefficient-time}
 \norm{(\pa_t^k(F(U)-F(0)))(t,\cdot)}_{H^{s-k}(\Om)}\le C_Rh(t).
\end{equation}
We prove \eqref{eq:app-time-derivatives} and
\eqref{eq:app-coefficient-time} simultaneously by induction on $k$.
At $k=0$, the solution bound is the definition of $h(t)$, and
\eqref{eq:app-coefficient-time} follows from
Lemma~\ref{lem:app-moser}.

Suppose both bounds hold up to order $k-1$, where $1\le k\le s$.
Applying $\pa_t^{k-1}$ to the equation in \eqref{eq:system} gives
\begin{equation*}
 \pa_t^kU=\pa_t^{k-1}Q(U)
 -\sum_j\sum_{\ell=0}^{k-1}\binom{k-1}{\ell}
   (\pa_t^\ell A_j(U))\pa_{x_j}\pa_t^{k-1-\ell}U.
\end{equation*}
The induction estimate (\ref{eq:app-coefficient-time}) and $Q(0)=0$ bound $\pa_t^{k-1}Q(U)$ by
$C_Rh(t)$ in $H^{s-k}(\Om)$. Moreover, in each product, for $0\le\ell\le k-1$, by the induction hypotheses \eqref{eq:app-time-derivatives} and
\eqref{eq:app-coefficient-time}, we have
\begin{equation*}
 \norm{(\pa_t^\ell A_j(U))(t,\cdot)}_{H^{s-1-\ell}(\Om)}\le C_R,
 \qquad
 \norm{\pa_{x_j}\pa_t^{k-1-\ell}U(t,\cdot)}_{H^{s-k+\ell}(\Om)}\le C_Rh(t).
\end{equation*}
Note that, for $\ell=0$, we use
$A_j(U)=A_j(0)+(A_j(U)-A_j(0))$ and absorb
$\norm{A_j(0)}_{H^{s-1}(\Om)}$ into $C_R$. Finally, we use estimate~\eqref{eq:app-product-interior}, with $r=s-k$ and
$r+\ell\le s-1$, to obtain $\norm{\pa_t^kU(t,\cdot)}_{H^{s-k}(\Om)}\le C_Rh(t)$.

It remains to prove \eqref{eq:app-coefficient-time} at order $k$.
The chain rule gives
\begin{equation*}
 \pa_t^kF(U)=\sum_{\ell=0}^{k-1}\binom{k-1}{\ell}
   (\pa_t^\ell F_U(U))\pa_t^{k-\ell}U.
\end{equation*}
As above, $\pa_t^\ell F_U(U)$ has $H^{s-1-\ell}(\Om)$ norm at most $C_R$,
and $\pa_t^{k-\ell}U$ has $H^{s-k+\ell}(\Om)$ norm at most $C_Rh(t)$.
When $\ell=0$, we use the bound for $\pa_t^kU$ just obtained.
The same multiplication estimate proves
\eqref{eq:app-coefficient-time} and closes the induction.
\end{proof}

The next proposition gives quadratic bounds for the nonlinear remainders.
\begin{proposition}[Interior remainder estimates]\label{prop:app-interior}
Let $U$ be as in Proposition~\ref{prop:app-time}, with $h(t)\le R\le1$.
Write $\partial^\beta=\pa_t^{\beta_0}\pa_x^{\beta'}$ and set
\begin{equation*}
 a_j(U)=A_j(U)-A_j(0),\qquad q(U)=Q(U)-Q_U(0)U,\qquad
 \mathcal R(U)=q(U)-\sum_j a_j(U)U_{x_j}.
\end{equation*}
Then
\begin{align*}
 &\sum_{|\beta|\le s-1}\norm{(\partial^\beta\mathcal R(U))(t,\cdot)}_{L^2(\Om)}\le C_Rh^2(t),\\
 &\sum_{|\beta|\le s}\left(
 \norm{(\partial^\beta q(U))(t,\cdot)}_{L^2(\Om)}
 +\sum_j\norm{([\partial^\beta,A_j(U)]U_{x_j})(t,\cdot)}_{L^2(\Om)}
 \right)\le C_Rh^2(t),
\end{align*}
where, for any smooth vector-valued function $v$, the commutator is
\begin{equation*}
 [\partial^\beta,A_j(U)]v
 :=\partial^\beta(A_j(U)v)-A_j(U)\partial^\beta v.
\end{equation*}
\end{proposition}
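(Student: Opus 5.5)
The plan is to reduce every term to a sum of products in which at least two factors vanish at $U=0$, and to bound each product with the multiplication estimate of Remark~\ref{rem:app-multiplication-cases}(i), the composition estimates of Lemma~\ref{lem:app-moser}, and the time-derivative bounds \eqref{eq:app-time-derivatives}--\eqref{eq:app-coefficient-time} of Proposition~\ref{prop:app-time}. The first step extends \eqref{eq:app-coefficient-time} from pure time derivatives to mixed derivatives $\partial^\beta=\pa_t^{\beta_0}\pa_x^{\beta'}$. The target is
\begin{equation*}
 \norm{\partial^\beta U(t,\cdot)}_{H^{s-|\beta|}(\Om)}+\norm{\partial^\beta(F(U)-F(0))(t,\cdot)}_{H^{s-|\beta|}(\Om)}\le C_Rh(t),\qquad |\beta|\le s.
\end{equation*}
The bound for $\partial^\beta U$ follows because spatial derivatives only lower the Sobolev index. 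For $F(U)$, I would rerun the same chain-rule induction as in Proposition~\ref{prop:app-time}, now with mixed derivatives.

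For $q(U)$, write
\begin{equation*}
 q(U)=\Bigl(\int_0^1(Q_U(\sigma U)-Q_U(0))\dd\sigma\Bigr)U=:\tilde q(U)\,U,
\end{equation*}
where $\tilde q$ is smooth with $\tilde q(0)=0$. Leibniz gives
\begin{equation*}
 \partial^\beta q=\sum_{\gamma\le\beta}\binom{\beta}{\gamma}\,\partial^\gamma\tilde q(U)\,\partial^{\beta-\gamma}U.
\end{equation*}
Each factor is $O(h)$ in the appropriate space, namely $H^{s-|\gamma|}$ and $H^{s-|\beta|+|\gamma|}$. If both orders are at most $s-1$, the product is bounded in $L^2$ by \eqref{eq:app-product-interior} with $r=0$, giving $C_Rh^2$. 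The endpoint cases $\gamma=\beta$ or $\gamma=0$ with $|\beta|=s$ pair an $L^2$ factor with an $L^\infty$ factor, where the latter is controlled by Lemma~\ref{lem:app-trace-interpolation} since it lies in $H^s$. The remainder $\mathcal R$ is handled the same way. Its term $a_j(U)U_{x_j}$ is a product of $a_j(U)=O(h)$ and $U_{x_j}$. For $|\beta|\le s-1$, we have $\partial^\gamma a_j\in H^{s-|\gamma|}$ and $\partial^{\beta-\gamma}U_{x_j}\in H^{s-1-|\beta|+|\gamma|}$, with total index at least $s-1>d/2$. Hence \eqref{eq:app-product-interior} applies directly, and I would simply sum over $\beta$.

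The commutator is the main point. We have
\begin{equation*}
 [\partial^\beta,A_j(U)]U_{x_j}=\sum_{0\ne\gamma\le\beta}\binom{\beta}{\gamma}\,\partial^\gamma a_j(U)\,\partial^{\beta-\gamma}U_{x_j},
\end{equation*}
so only derivatives of $a_j(U)$ appear, and each one is $O(h)$. The obstacle is the top order $|\beta|=s$. There the factor $\partial^{\beta-\gamma}U_{x_j}$ carries $s-|\gamma|+1$ derivatives, while $\partial^\gamma a_j$ lies only in $H^{s-|\gamma|}$. I would split the sum by $|\gamma|$. If $|\gamma|=1$, then $\partial^\gamma a_j\in H^{s-1}\subset L^\infty$ and $\partial^{\beta-\gamma}U_{x_j}$ is an order-$s$ derivative of $U$, bounded in $L^2$ by $C_Rh$. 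If $|\gamma|=s$, then $U_{x_j}\in L^\infty$ and $\partial^\gamma a_j\in L^2$. For $2\le|\gamma|\le s-1$, set $\ell=|\gamma|-1$. Then $\partial^\gamma a_j\in H^{s-1-\ell}$ and $\partial^{\beta-\gamma}U_{x_j}\in H^{\ell}$, so \eqref{eq:app-product-interior} with $r=0$ applies. Time derivatives are interchangeable with spatial ones throughout, thanks to the mixed version of Proposition~\ref{prop:app-time}, so this counting works verbatim. Summing all cases gives the claimed $C_Rh^2(t)$ bounds.
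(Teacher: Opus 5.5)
Your proposal is correct and follows essentially the paper's route. Both arguments expand each term by Leibniz into products of two $O(h)$ factors, where only derivatives of $a_j(U)$ appear in the commutator, and bound each product by \eqref{eq:app-product-interior} with $r=0$ together with the bounds of Proposition~\ref{prop:app-time}.

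The differences are cosmetic. First, the paper handles $q$ through the time chain rule $\pa_t^kq(U)=\sum_{\ell<k}\binom{k-1}{\ell}(\pa_t^\ell q_U(U))\pa_t^{k-\ell}U$ rather than your factorization $q=\tilde q(U)U$. Second, the mixed-derivative bounds need no new induction, since spatial derivatives simply lower the Sobolev index in \eqref{eq:app-time-derivatives}--\eqref{eq:app-coefficient-time}. Third, your endpoint cases $|\gamma|\in\{1,s\}$ are already covered by \eqref{eq:app-product-interior} with $\ell=|\gamma|-1\in\{0,s-1\}$.
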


\begin{proof}
Proposition~\ref{prop:app-time} and
\eqref{eq:app-coefficient-time}
give
\begin{equation*}
 \norm{\partial^\beta U(t,\cdot)}_{H^{s-|\beta|}(\Om)}
 +\norm{(\partial^\beta a_j(U))(t,\cdot)}_{H^{s-|\beta|}(\Om)}\le C_Rh(t)
 \qquad(|\beta|\le s).
\end{equation*}
For $|\alpha|\le s$, expand
\begin{equation*}
 [\partial^\alpha,A_j(U)]U_{x_j}
 =\sum_{0<\beta\le\alpha}\binom\alpha\beta
   (\partial^\beta A_j(U))\partial^{\alpha-\beta}U_{x_j}.
\end{equation*}
Note that $\partial^\beta A_j(U)=\partial^\beta a_j(U)$ for $|\beta|\ge1$;
then $\partial^\beta A_j(U)$ has
$H^{s-|\beta|}(\Om)$ norm at most $C_Rh(t)$. Moreover, 
$\partial^{\alpha-\beta}U_{x_j}$ has $H^{|\beta|-1}(\Om)$ norm
at most $C_Rh(t)$.
Applying estimate~\eqref{eq:app-product-interior}, with $r=0$ and $\ell=|\beta|-1$,
to each term and summing over $\beta$ gives
\begin{equation*}
 \norm{[\partial^\alpha,A_j(U)]U_{x_j}}_{L^2(\Om)}\le C\sum_{0<\beta\le\alpha}
   \norm{\partial^\beta A_j(U)}_{H^{s-|\beta|}(\Om)}
   \norm{\partial^{\alpha-\beta}U_{x_j}}_{H^{|\beta|-1}(\Om)}\le C_Rh^2(t).
\end{equation*}
Similarly, for $|\alpha|\le s-1$, we have
\begin{equation*}
 \norm{\partial^\alpha(a_j(U)U_{x_j})}_{L^2(\Om)}\le C\sum_{\beta\le\alpha}
   \norm{\partial^\beta a_j(U)}_{H^{s-1-|\beta|}(\Om)}
   \norm{\partial^{\alpha-\beta}U_{x_j}}_{H^{|\beta|}(\Om)}\le C_Rh^2(t).
\end{equation*}

For $q$, note that $q_U(0)=0$. Lemma~\ref{lem:app-moser} and
Lemma~\ref{lem:app-trace-interpolation} give $\norm{q(U(t,\cdot))}_{H^s(\Om)}\le C_Rh^2(t)$.
For $1\le k\le s$, we have
\begin{equation*}
 \pa_t^kq(U)=\sum_{\ell=0}^{k-1}\binom{k-1}{\ell}
    (\pa_t^\ell q_U(U))\pa_t^{k-\ell}U.
\end{equation*}
For $0 \leq \ell \leq k-1$,
$\pa_t^\ell q_U(U)$ has $H^{s-1-\ell}(\Om)$ norm at most $C_Rh(t)$, and $\pa_t^{k-\ell}U$ has
$H^{s-k+\ell}(\Om)$ norm at most $C_Rh(t)$.
Estimate~\eqref{eq:app-product-interior}, with $r=s-k$, yields
\begin{equation*}
 \norm{(\pa_t^kq(U))(t,\cdot)}_{H^{s-k}(\Om)}\le C\sum_{\ell=0}^{k-1}
   \norm{\pa_t^\ell q_U(U)}_{H^{s-1-\ell}(\Om)}
   \norm{\pa_t^{k-\ell}U}_{H^{s-k+\ell}(\Om)}\le C_Rh^2(t),
\end{equation*}
which concludes the proof.
\end{proof}

To estimate the nonlinear boundary remainder in
Proposition~\ref{prop:app-boundary}, we first establish derivative
bounds for $F(x,\zeta_+)$ with smooth $F$.
\begin{proposition}[Estimates for boundary derivatives]
\label{prop:app-boundary-derivatives}
Let $U$ be as in Proposition~\ref{prop:app-time}, with
$h(t)=\norm{U(t,\cdot)}_{H^s(\Om)}\le R$.
\begin{enumerate}[(i)]
\item On the smooth boundary $\Ga$, recall that
$\zeta_+=\zeta_+(t,x)$ is the  outgoing variable at the boundary point $x \in \Gamma.$ For a smooth $F(x,z)$ with $F(x,0)=0$, we have
\begin{equation}\label{eq:app-boundary-time-composition}
 \norm{(\pa_t^k F(\cdot,\zeta_+))(t,\cdot)}_{H^{s-k-1/2}(\Ga)}
 \le C_Rh(t)\qquad(0\le k\le s-1).
\end{equation}
\item On each boundary  $\Ga_i$ of $\Om=(0,1)\times \mathbb T^{d-1}$, recall that
$\zeta_+=\zeta_+(t,x)$ is the  outgoing variable at the boundary point $x \in \Gamma_i.$ Note that $x=(0,x') \in \Ga_0$ or $x=(1,x') \in \Ga_1$. We denote the tangential derivatives by
$Z^\alpha=\pa_t^{\alpha_0}\pa_{x'}^{\alpha'}$, where $\alpha=(\alpha_0,\alpha')\in\mathbb N_0^d$ is a multi-index.
For a smooth $F(x',z)$ with $F(x',0)=0$, we have
\begin{equation}\label{eq:app-boundary-mixed-composition}
 \norm{(Z^\alpha F(\cdot,\zeta_{i,+}))(t,\cdot)}_{H^{s-|\alpha|-1/2}(\Ga_i)}
 \le C_Rh(t)\qquad(|\alpha|\le s-1).
\end{equation}
\end{enumerate}
Here $C_R>0$ depends on $R$ and is independent of $U$.
\end{proposition}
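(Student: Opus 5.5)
We prove \eqref{eq:app-boundary-time-composition} and \eqref{eq:app-boundary-mixed-composition} by induction on the order $k$ (resp. $|\alpha|$), mimicking Proposition~\ref{prop:app-time} but on the boundary. The first step is to bound the traces of the time (resp. mixed) derivatives of $\zeta_+$ itself. Since $\zeta_+=L_+(x)U$ with $L_+$ smooth and time independent, we have $\pa_t^k\zeta_+=L_+\pa_t^kU|_\Ga$. Proposition~\ref{prop:app-time} gives $\norm{\pa_t^kU}_{H^{s-k}(\Om)}\le C_Rh(t)$, and the trace estimate of Lemma~\ref{lem:app-trace-interpolation} with $r=s-k\ge1$ then yields
\begin{equation*}
 \norm{\pa_t^k\zeta_+(t,\cdot)}_{H^{s-k-1/2}(\Ga)}\le C_Rh(t)\qquad(0\le k\le s-1).
\end{equation*}
Multiplication by the smooth matrix $L_+$ on $\Ga$ preserves these norms. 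On the cylinder, $\pa_{x'}$ is tangential to $\Ga_i$ and commutes with restriction, and $\Ga_i$ is flat. Applying tangential derivatives to Proposition~\ref{prop:app-time} therefore gives $\norm{Z^\alpha U}_{H^{s-|\alpha|}(\Om)}\le C_Rh(t)$, and the same trace argument gives the analogous bound for $Z^\alpha\zeta_{i,+}$ in $H^{s-|\alpha|-1/2}(\Ga_i)$.

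For $k=0$, the claim $\norm{F(\cdot,\zeta_+)}_{H^{s-1/2}(\Ga)}\le C_Rh(t)$ follows from Lemma~\ref{lem:app-moser} with $S=\Ga$ and $r=s-1/2$. This lemma applies because $\norm{\zeta_+}_{L^\infty(\Ga)}\le C\norm{U}_{W^{1,\infty}(\Om)}\le C_R$, which holds since $U$ stays in a fixed ball. For $1\le k\le s-1$, we would prove simultaneously a coefficient bound: for every smooth $F$ (not necessarily vanishing at $0$), the function $\pa_t^\ell F(\cdot,\zeta_+)$ is bounded by $C_R$ in $H^{s-\ell-1/2}(\Ga)$ for $\ell\ge0$, and by $C_Rh(t)$ when $F(x,0)=0$ or $\ell\ge1$. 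The induction step writes, via the chain rule,
\begin{equation*}
 \pa_t^kF(x,\zeta_+)=\sum_{\ell=0}^{k-1}\binom{k-1}{\ell}\bigl(\pa_t^\ell F_z(x,\zeta_+)\bigr)\pa_t^{k-\ell}\zeta_+ .
\end{equation*}
In each term, the coefficient factor satisfies $\norm{\pa_t^\ell F_z(\cdot,\zeta_+)}_{H^{s-\ell-1/2}(\Ga)}\le C_R$. For $\ell=0$ this comes from Lemma~\ref{lem:app-moser} applied to $F_z(x,z)-F_z(x,0)$, plus the smooth function $F_z(x,0)$. The other factor satisfies $\norm{\pa_t^{k-\ell}\zeta_+}_{H^{s-k+\ell-1/2}(\Ga)}\le C_Rh(t)$ by the first step, since $k-\ell\le s-1$. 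The product estimate \eqref{eq:app-product-boundary-lower} has exactly these indices, so each term is bounded by $C_Rh(t)$. The mixed case on $\Ga_i$ is identical. We replace the chain rule by the multivariate Fa\`a di Bruno/Leibniz expansion $Z^\alpha F=\sum_{\beta<\alpha}\binom{\alpha-e}{\beta}(Z^\beta F_z)\,Z^{\alpha-\beta}\zeta_{i,+}$, where $e$ is one unit index with $e\le\alpha$. We also include the explicit $x'$-dependence of $F$, which produces additional terms $(\pa_{x'}F)(x',\zeta_{i,+})$ of the same type handled by the induction, and we use \eqref{eq:app-product-boundary-lower} on $\Ga_i$.

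The main obstacle is the indexing in the induction. The coefficient factors $\pa_t^\ell F_z$ must themselves be controlled at the regularity $H^{s-\ell-1/2}$, one half-derivative below the trace level. This is why the statement stops at $k\le s-1$: the top-order case $k=s$ has no trace in $L^2(\Ga)$ from $H^0(\Om)$ and must be treated separately via \eqref{eq:app-product-boundary-top} and the boundary dissipation, as in Proposition~\ref{prop:app-boundary}. Within the range $k\le s-1$, we would check that every index pair $(\ell,k)$ satisfies the constraints $1\le k\le s-1$ and $0\le\ell\le k-1$ of Remark~\ref{rem:app-multiplication-cases}(ii). We would also check that the induction on the coefficient bound closes at the same order $k$ before $k$ is increased, exactly as in the proof of Proposition~\ref{prop:app-time}.
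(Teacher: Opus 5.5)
Your proposal is correct and follows the paper's proof step for step. You first derive trace bounds for $\pa_t^k\zeta_+$ (resp.\ $Z^\alpha\zeta_{i,+}$) from Proposition~\ref{prop:app-time} and Lemma~\ref{lem:app-trace-interpolation}, then get the base case from Lemma~\ref{lem:app-moser}, then induct on the order using a single chain-rule/Leibniz expansion, with the explicit $x'$-dependence term handled by the induction hypothesis and each product closed by \eqref{eq:app-product-boundary-lower}. Your splitting $F_z=(F_z-F_z(x,0))+F_z(x,0)$ for coefficients that do not vanish at zero is also what the paper does, implicitly in (i) and explicitly in (ii).
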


\begin{proof}
(i) Proposition~\ref{prop:app-time} and
Lemma~\ref{lem:app-trace-interpolation} give
\begin{equation*}
 \norm{\pa_t^k\zeta_+(t,\cdot)}_{H^{s-k-1/2}(\Ga)}\le C_Rh(t)
 \qquad(0\le k\le s-1).
\end{equation*}
We prove \eqref{eq:app-boundary-time-composition} by induction on $k$
for every such $F$. 
At $k=0$, we use Lemma~\ref{lem:app-moser}.
Assume the bound holds up to order $k-1$, where $1\le k\le s-1$.
The chain rule gives
\begin{equation*}
 \pa_t^k F(x,\zeta_+)
 =\sum_{\ell=0}^{k-1}\binom{k-1}{\ell}
   (\pa_t^\ell F_z(x,\zeta_+))\pa_t^{k-\ell}\zeta_+.
\end{equation*}
By induction, for $0\le\ell\le k-1$, $\pa_t^\ell F_z(x,\zeta_+)$
is bounded by $C_R$ in $H^{s-\ell-1/2}(\Ga)$.
The trace estimate above bounds $\pa_t^{k-\ell}\zeta_+$ by
$C_Rh(t)$ in $H^{s-k+\ell-1/2}(\Ga)$.
Estimate~\eqref{eq:app-product-boundary-lower} with $r=s-k-1/2$ gives
\begin{equation*}
 \begin{aligned}
 \norm{(\pa_t^kF(\cdot,\zeta_+))(t,\cdot)}_{H^{s-k-1/2}(\Ga)}
 \le C\sum_{\ell=0}^{k-1}
   \norm{\pa_t^\ell F_z(\cdot,\zeta_+)}_{H^{s-\ell-1/2}(\Ga)}
   \norm{\pa_t^{k-\ell}\zeta_+}_{H^{s-k+\ell-1/2}(\Ga)}\le C_Rh(t),
 \end{aligned}
\end{equation*}
which completes the induction.

(ii) Proposition~\ref{prop:app-time}, tangential differentiation, and
Lemma~\ref{lem:app-trace-interpolation} give
\begin{equation*}
 \norm{Z^\alpha\zeta_{i,+}(t,\cdot)}_{H^{s-|\alpha|-1/2}(\Ga_i)}\le C_Rh(t)
 \qquad(|\alpha|\le s-1).
\end{equation*}
Again we induct on $k=|\alpha|$ for every $F$ with $F(x',0)=0$.
The case $k=0$ follows from Lemma~\ref{lem:app-moser}.
For $k\ge1$, choose $j$ with $\alpha_j>0$ and put
$\widehat\alpha=\alpha-e_j$, where $e_j\in\mathbb{N}_0^d$ has $1$ in its $j$-th component and $0$ elsewhere. One application of the chain rule followed by Leibniz's rule yields
\begin{equation}\label{eq:app-boundary-chain}
 Z^\alpha F(x',\zeta_{i,+})
 =Z^{\widehat\alpha}F_{x_{j+1}}(x',\zeta_{i,+})
 +\sum_{\beta\le\widehat\alpha}\binom{\widehat\alpha}{\beta}
   (Z^\beta F_z(x',\zeta_{i,+}))Z^{\alpha-\beta}\zeta_{i,+}.
\end{equation}
Since $F_{x_{j+1}}(x',0)=0$, that term is controlled by the induction
hypothesis in $H^{s-k+1/2}(\Ga_i)$.
For $\ell=|\beta|\le k-1$, separate the fixed coefficient
$F_z(x',0)$ and apply the induction hypothesis to its perturbation.
The two product factors are bounded by $C_R$ and $C_Rh(t)$ in
$H^{s-\ell-1/2}(\Ga_i)$ and $H^{s-k+\ell-1/2}(\Ga_i)$, respectively.
Estimate~\eqref{eq:app-product-boundary-lower} with $r=s-k-1/2$ proves the claim.
\end{proof}

The estimates of boundary remainders below are used in spatial recovery in
Section~\ref{sec:smooth-proof} and boundary self-absorption in
Section~\ref{sec:cylinder-proof}.
\begin{proposition}[Boundary remainder estimates]\label{prop:app-boundary}
Let $U$ be as in Proposition~\ref{prop:app-time}, with $h(t)\le R\le1$.
For the boundary graph, set
\begin{equation*}
 K(x)=\Phi_{\zeta_+}(x,0),\qquad
 N(x,\zeta_+)=\Phi(x,\zeta_+)-K(x)\zeta_+.
\end{equation*}
Thus $N(x,0)=N_{\zeta_+}(x,0)=0$.
Highest-order boundary norms are interpreted as in
Remark~\ref{rem:time-regularization}.
\begin{enumerate}[(i)]
\item On the smooth boundary, let
$\zeta_+=L_+(x)U|_{\Ga}$. Then
\begin{equation*}
 \begin{aligned}
 \norm{(\pa_t^kN(\cdot,\zeta_+))(t,\cdot)}_{H^{s-k-1/2}(\Ga)}
 &\le C_Rh^2(t)\qquad(0\le k<s),\\
 \norm{(\pa_t^sN(\cdot,\zeta_+))(t,\cdot)}_{L^2(\Ga)}
 &\le C_Rh(t)\norm{\pa_t^s\zeta_+(t,\cdot)}_{L^2(\Ga)}+C_Rh^2(t).
 \end{aligned}
\end{equation*}
\item On a periodic face $\Ga_i$, let
$\zeta_+=L_+U|_{\Ga_i}$.
Then
\begin{align*}
 \sum_{|\alpha|\le s}\norm{(Z^\alpha N(\cdot,\zeta_+))(t,\cdot)}_{L^2(\Ga_i)}^2
 &\le C_Rh^2(t)\sum_{|\beta|\le s}\norm{Z^\beta\zeta_+(t,\cdot)}_{L^2(\Ga_i)}^2,\\
 \sum_{|\alpha|=m}\norm{[Z^\alpha,K]\zeta_+(t,\cdot)}_{L^2(\Ga_i)}^2
 &\le C_R\sum_{|\beta|\le m-1}\norm{Z^\beta\zeta_+(t,\cdot)}_{L^2(\Ga_i)}^2
 \qquad(1\le m\le s).
\end{align*}
\end{enumerate}
The constant $C_R$ is independent of $U$.
\end{proposition}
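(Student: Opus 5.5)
We will prove Proposition~\ref{prop:app-boundary} by repeating the chain-rule expansions of Proposition~\ref{prop:app-boundary-derivatives}, but now using the quadratic structure $N(x,0)=N_{\zeta_+}(x,0)=0$. This makes one factor in each product small, of size $C_Rh(t)$. The key observation is that $N_z(x,\zeta_+)$ is a smooth function vanishing at $\zeta_+=0$. Hence Proposition~\ref{prop:app-boundary-derivatives} applied with $F=N_z$ gives, on $\Ga$ and on each face $\Ga_i$,
\begin{equation*}
 \norm{\pa_t^\ell N_z(\cdot,\zeta_+)}_{H^{s-\ell-1/2}}\le C_Rh(t),\qquad
 \norm{Z^\beta N_z(\cdot,\zeta_{i,+})}_{H^{s-|\beta|-1/2}(\Ga_i)}\le C_Rh(t)
\end{equation*}
for $\ell,|\beta|\le s-1$. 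This is one factor $h$ better than the bound on $F_z$ used there.

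For (i) with $0\le k<s$, we expand
\begin{equation*}
 \pa_t^kN=\sum_{\ell=0}^{k-1}\binom{k-1}{\ell}(\pa_t^\ell N_z)\pa_t^{k-\ell}\zeta_+ .
\end{equation*}
Applying \eqref{eq:app-product-boundary-lower} together with the trace bound $\norm{\pa_t^{k-\ell}\zeta_+}_{H^{s-k+\ell-1/2}(\Ga)}\le C_Rh(t)$ gives the bound $C_Rh^2(t)$. The case $k=0$ follows directly from the second statement of Lemma~\ref{lem:app-moser} with $r=s-1/2$ and $\norm{\zeta_+}_{L^\infty}\le Ch$.

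For $k=s$, we split off the term $\ell=0$, which is $N_z(x,\zeta_+)\pa_t^s\zeta_+$. We bound it in $L^2(\Ga)$ by $\norm{N_z(\cdot,\zeta_+)}_{L^\infty(\Ga)}\norm{\pa_t^s\zeta_+}_{L^2(\Ga)}\le C_Rh\norm{\pa_t^s\zeta_+}_{L^2(\Ga)}$. For $1\le\ell\le s-1$, we use \eqref{eq:app-product-boundary-top} with $f=\pa_t^\ell N_z\in H^{s-\ell-1/2}$ and $g=\pa_t^{s-\ell}\zeta_+\in H^{\ell-1/2}$. Both factors are $O(h)$ by the trace bound with $k=s-\ell\le s-1$, which yields $C_Rh^2$. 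The top-order $L^2(\Ga)$ trace of $\pa_t^s\zeta_+$ is only meaningful under the regularization convention of Remark~\ref{rem:time-regularization}, and we keep it explicit on the right.

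For (ii), the commutator estimate is linear. By Leibniz,
\begin{equation*}
 [Z^\alpha,K]\zeta_+=\sum_{0<\beta\le\alpha}\binom\alpha\beta(\pa_{x'}^{\beta'}K)Z^{\alpha-\beta}\zeta_+ ,
\end{equation*}
and only tangential derivatives fall on $K$, since $K$ is independent of $t$. Each coefficient is bounded in $L^\infty$ by $C^s$ norms of $K_i$, so the $L^2$ bound by lower-order $Z^\beta\zeta_+$ with $|\beta|\le m-1$ is immediate.

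For the $N$ estimate in (ii), we use the multivariate Faà di Bruno expansion of $Z^\alpha N(x',\zeta_+)$ into terms
\begin{equation*}
 (\pa_{x'}^\gamma\pa_z^pN)(x',\zeta_+)\,Z^{\beta_1}\zeta_+\cdots Z^{\beta_p}\zeta_+,\qquad \sum_i|\beta_i|+|\gamma|=|\alpha|.
\end{equation*}
The terms with $p=0$ involve $\pa_{x'}^\gamma N$, which is $O(|\zeta_+|^2)$, and are bounded by $C h\,|\zeta_+|$ pointwise. For $p=1$, the coefficient $\pa_{x'}^\gamma N_z$ is $O(|\zeta_+|)$, hence $O(h)$ in $L^\infty$. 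For $p\ge2$, we place the factor of highest order in $L^2(\Ga_i)$ and the rest in $L^\infty$ or in suitable $H^{\sigma}(\Ga_i)$ via \eqref{eq:app-product-boundary-top} and the trace bound from Proposition~\ref{prop:app-boundary-derivatives}(ii). Since $p\ge2$, at least one of the other factors contributes $Ch$.

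The main obstacle is the highest-order configuration. Here $|\beta_1|=s$ cannot be controlled by $h$ and must remain as $\norm{Z^\beta\zeta_+}_{L^2(\Ga_i)}$. We must therefore check that every other factor is genuinely small, namely $O(h)$, rather than merely $O(1)$, so that the right-hand side is $h^2\sum D_j$ and not $h\sum D_j$. For the mixed cases, we will interpolate the middle factors between $L^2(\Ga_i)$ norms of $Z^\beta\zeta_+$ and the $H^{s-1/2}$ trace bounds. We would first try the simple route of pairing one factor in $H^{\ell-1/2}$ with $\norm{Z^{\beta}\zeta_+}_{L^2}$ bounded by the boundary sum. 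If that fails, we fall back on bounding that factor by $C_Rh$ and absorbing it through $h\le(\sum_\beta\norm{Z^\beta\zeta_+}_{L^2}^2)^{1/2}\cdot$ const, which may not hold. This is the step we are least sure of.
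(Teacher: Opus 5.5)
Your part (i) and your commutator bound in (ii) follow the paper's argument, and your $p=0$ and $p=1$ terms in (ii) are fine. The gap is in the first estimate of (ii), at the step you flagged yourself. You never give a reason why the factor that is \emph{not} small can be measured by the boundary sum $\sum_{|\gamma|\le s}\norm{Z^\gamma\zeta_+}_{L^2(\Ga_i)}$.

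For the genuinely bilinear terms (your $p\ge2$ terms, or the products $(Z^\beta N_z)\,Z^{\alpha-\beta}\zeta_+$ with $|\beta|\ge1$ in a one-step chain rule), putting the highest-order factor in $L^2(\Ga_i)$ and the others in $L^\infty$ fails in general. For instance, with $d=4$, $s=4$ and two second-order factors, the trace bound only places each factor in $H^{3/2}(\T^3)$, which does not embed into $L^\infty$. So the non-small factor must be measured in a positive-order norm on $\Ga_i$, as in \eqref{eq:app-product-boundary-lower}--\eqref{eq:app-product-boundary-top}, and that norm must be controlled by the boundary sum.

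The missing observation, which is what the paper uses, is that $\Ga_i\cong\T^{d-1}$ is flat and the tangential derivatives $\pa_{x'}$ are themselves among the $Z$'s. Hence integer-order Sobolev norms on the face are sums of $L^2(\Ga_i)$ norms of further $Z$-derivatives:
\begin{equation*}
 \norm{Z^\beta\zeta_+}_{H^{\sigma}(\Ga_i)}
 \le C\norm{Z^\beta\zeta_+}_{H^{s-|\beta|}(\Ga_i)}
 \le C\sum_{|\gamma|\le s}\norm{Z^\gamma\zeta_+}_{L^2(\Ga_i)},
 \qquad 0\le\sigma\le s-|\beta|,\ |\beta|\le s-1.
\end{equation*}
No interpolation is needed. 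With this, every product is bounded by $C_Rh(t)$ times the boundary sum. The small factor is $Z^\beta N_z$, controlled by Proposition~\ref{prop:app-boundary-derivatives}(ii) with $F=N_z$, which you already derived at the outset. The paper organizes this as an induction: it proves $\norm{Z^\alpha N}_{H^{s-|\alpha|-1/2}(\Ga_i)}\le C_Rh(t)\sum_{|\gamma|\le s}\norm{Z^\gamma\zeta_+}_{L^2(\Ga_i)}$ for $|\alpha|\le s-1$ via \eqref{eq:app-product-boundary-lower}, and then treats $|\alpha|=s$ in $L^2(\Ga_i)$ via \eqref{eq:app-product-boundary-top}.

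Your proposed fallback cannot work. The inequality $h(t)\le C\bigl(\sum_{|\beta|\le s}\norm{Z^\beta\zeta_+}_{L^2(\Ga_i)}^2\bigr)^{1/2}$ is false. Take nonzero $U_0\in C_c^\infty(\Om)$: at $t=0$ every $Z^\beta\zeta_+$ vanishes on $\Ga_i$, because the time derivatives are computed from the equation and vanish near $\Ga$, while $h(0)>0$. If you instead bound the second factor only by its trace bound $C_Rh(t)$, you get a contribution $C_Rh^4(t)$. That is not of the stated form $C_Rh^2(t)\sum_{|\beta|\le s}\norm{Z^\beta\zeta_+}_{L^2(\Ga_i)}^2$.

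Once the displayed inequality is in hand, your Fa\`a di Bruno route also closes. For $p\ge2$ every factor has order at most $s-1$, and a multilinear Sobolev product estimate on $\T^{d-1}$ suffices because $s>d/2+1$. The one-step chain rule with the already small coefficient $Z^\beta N_z$ avoids that multilinear bookkeeping altogether.
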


\begin{proof}
(i) At $k=0$, Lemma~\ref{lem:app-moser} gives the quadratic bound.
For $k\ge1$,
\begin{equation}\label{eq:app-boundary-N-time}
 \pa_t^kN(x,\zeta_+)
 =\sum_{\ell=0}^{k-1}\binom{k-1}{\ell}
   (\pa_t^\ell N_{\zeta_+}(x,\zeta_+))\pa_t^{k-\ell}\zeta_+.
\end{equation}
Because $N_{\zeta_+}(x,0)=0$, apply
\eqref{eq:app-boundary-time-composition} to $F=N_{\zeta_+}$.
The first factor has $H^{s-\ell-1/2}(\Ga)$ norm at most
$C_Rh(t)$. For $k<s$, the trace bound for the second factor and
\eqref{eq:app-product-boundary-lower} give the desired $C_Rh^2(t)$.
For $k=s$, the $\ell=0$ term is bounded by
\begin{equation*}
 \norm{N_{\zeta_+}(\cdot,\zeta_+(t,\cdot))}_{L^\infty(\Ga)}
 \norm{\pa_t^s\zeta_+(t,\cdot)}_{L^2(\Ga)}
 \le C_Rh(t)\norm{\pa_t^s\zeta_+(t,\cdot)}_{L^2(\Ga)}.
\end{equation*}
For $1\le\ell\le s-1$, the factors in
\eqref{eq:app-boundary-N-time} have $H^{s-\ell-1/2}(\Ga)$
and $H^{\ell-1/2}(\Ga)$ norms at most $C_Rh(t)$.
Estimate~\eqref{eq:app-product-boundary-top} gives $C_Rh^2(t)$ in
$L^2(\Ga)$.

(ii) For $i=0,1$, we have
\begin{equation}\label{eq:app-boundary-dissipation-input}
 \begin{aligned}
 \norm{Z^\beta \zeta_+(t,\cdot)}_{H^{s-k-1/2}(\Ga_i)}
 \le C\norm{Z^\beta \zeta_+(t,\cdot)}_{H^{s-k}(\Ga_i)}
 \le C\sum_{|\gamma|\le s}\norm{Z^\gamma\zeta_+(t,\cdot)}_{L^2(\Ga_i)},
 \qquad k=|\beta|\le s-1.
 \end{aligned}
\end{equation}
We prove by induction, for every smooth quadratic remainder $N$, that
\begin{equation}\label{eq:app-boundary-quadratic-induction}
 \norm{(Z^\alpha N(\cdot,\zeta_+))(t,\cdot)}_{H^{s-k-1/2}(\Ga_i)}
 \le C_Rh(t)\sum_{|\gamma|\le s}\norm{Z^\gamma\zeta_+(t,\cdot)}_{L^2(\Ga_i)},
 \qquad k=|\alpha|\le s-1.
\end{equation}
For $k=0$, Lemma~\ref{lem:app-moser} gives
\begin{equation*}
 \norm{(N(\cdot,\zeta_+))(t,\cdot)}_{H^{s-1/2}(\Ga_i)}
 \le C_R\norm{\zeta_+(t,\cdot)}_{L^\infty(\Ga_i)}\norm{\zeta_+(t,\cdot)}_{H^{s-1/2}(\Ga_i)}
 \le C_Rh(t)\sum_{|\gamma|\le s}\norm{Z^\gamma\zeta_+(t,\cdot)}_{L^2(\Ga_i)}.
\end{equation*}
For $1\le k\le s-1$, apply \eqref{eq:app-boundary-chain} with $F=N$.
The explicit derivative $N_{x_{j+1}}$ still vanishes quadratically, so
its term is controlled by the induction hypothesis at order $k-1$.
For each product, \eqref{eq:app-boundary-mixed-composition} applied
to $N_{\zeta_+}$ bounds its first factor by $C_Rh(t)$ in
$H^{s-\ell-1/2}(\Ga_i)$, where $\ell=|\beta|$.
Use \eqref{eq:app-boundary-dissipation-input} for the other factor in
$H^{s-k+\ell-1/2}(\Ga_i)$. Estimate~\eqref{eq:app-product-boundary-lower}
then proves \eqref{eq:app-boundary-quadratic-induction}.

At $k=s$, the explicit-position term is controlled by
\eqref{eq:app-boundary-quadratic-induction} at order $s-1$ in
$H^{1/2}(\Ga_i)$, hence in $L^2(\Ga_i)$.
The $\beta=0$ product satisfies
\begin{equation*}
 \norm{(N_{\zeta_+}\,Z^\alpha \zeta_+)(t,\cdot)}_{L^2(\Ga_i)}
 \le C_Rh(t)\norm{Z^\alpha \zeta_+(t,\cdot)}_{L^2(\Ga_i)}
 \le C_Rh(t)\sum_{|\gamma|\le s}\norm{Z^\gamma\zeta_+(t,\cdot)}_{L^2(\Ga_i)}.
\end{equation*}
For $1\le\ell=|\beta|\le s-1$, use
\eqref{eq:app-product-boundary-top}: the first factor is bounded by
$C_Rh(t)$ in $H^{s-\ell-1/2}(\Ga_i)$ and the second by
$C\sum_{|\gamma|\le s}\norm{Z^\gamma\zeta_+(t,\cdot)}_{L^2(\Ga_i)}$
in $H^{\ell-1/2}(\Ga_i)$.
Together with \eqref{eq:app-boundary-quadratic-induction}, squaring and
summing these bounds and applying Cauchy--Schwarz to the finite sums
gives the first estimate.
Finally, each term of $[Z^\alpha,K]\zeta_+$ differentiates $K$
at least once in $x'$, leaving a solution derivative of order less
than $|\alpha|$. The fixed smooth coefficient is bounded in
$L^\infty(\Ga_i)$, and summing the squared bounds gives the second estimate.
\end{proof}


\begin{thebibliography}{10}

\bibitem{AdamsFournier2003}
R.~A. Adams and J.~J.~F. Fournier.
\newblock {\em Sobolev Spaces}.
\newblock Academic Press, 2003.

\bibitem{ADN1964}
S.~Agmon, A.~Douglis, and L.~Nirenberg.
\newblock Estimates near the boundary for solutions of elliptic partial
  differential equations satisfying general boundary conditions. {II}.
\newblock {\em Communications on Pure and Applied Mathematics}, 1964.

\bibitem{BaiHayat2026}
Y.~Bai and A.~Hayat.
\newblock Lyapunov functions and stability of linear two-dimensional hyperbolic
  systems.
\newblock {\em HAL preprint}, 2026.

\bibitem{BallZarnescu2017}
J.~M. Ball and A.~Zarnescu.
\newblock Partial regularity and smooth topology-preserving approximations of
  rough domains.
\newblock {\em Calculus of Variations and Partial Differential Equations},
  2017.

\bibitem{BLR1992}
C.~Bardos, G.~Lebeau, and J.~Rauch.
\newblock Sharp sufficient conditions for the observation, control, and
  stabilization of waves from the boundary.
\newblock {\em SIAM Journal on Control and Optimization}, 1992.

\bibitem{BC2016}
G.~Bastin and J.-M. Coron.
\newblock {\em Stability and Boundary Stabilization of {1-D} Hyperbolic
  Systems}.
\newblock Birkh{\"a}user, 2016.

\bibitem{BGS2007}
S.~Benzoni-Gavage and D.~Serre.
\newblock {\em Multi-dimensional Hyperbolic Partial Differential Equations:
  First-order Systems and Applications}.
\newblock Oxford University Press, 2007.

\bibitem{CP1982}
J.~Chazarain and A.~Piriou.
\newblock {\em Introduction to the Theory of Linear Partial Differential
  Equations}.
\newblock North-Holland, 1982.

\bibitem{CB2015}
J.-M. Coron and G.~Bastin.
\newblock Dissipative boundary conditions for one-dimensional quasi-linear
  hyperbolic systems: {Lyapunov} stability for the {$C^1$}-norm.
\newblock {\em SIAM Journal on Control and Optimization}, 2015.

\bibitem{CBDN2008}
J.-M. Coron, G.~Bastin, and B.~{d'Andr{\'e}a-Novel}.
\newblock Dissipative boundary conditions for one-dimensional nonlinear
  hyperbolic systems.
\newblock {\em SIAM Journal on Control and Optimization}, 2008.

\bibitem{CBDN2007}
J.-M. Coron, B.~{d'Andr{\'e}a-Novel}, and G.~Bastin.
\newblock A strict {Lyapunov} function for boundary control of hyperbolic
  systems of conservation laws.
\newblock {\em IEEE Transactions on Automatic Control}, 2007.

\bibitem{CN2015}
J.-M. Coron and H.-M. Nguyen.
\newblock Dissipative boundary conditions for nonlinear {1-D} hyperbolic
  systems: Sharp conditions through an approach via time-delay systems.
\newblock {\em SIAM Journal on Mathematical Analysis}, 2015.

\bibitem{CVKB2013}
J.-M. Coron, R.~Vazquez, M.~Krstic, and G.~Bastin.
\newblock Local exponential {$H^2$} stabilization of a {$2\times2$} quasilinear
  hyperbolic system using backstepping.
\newblock {\em SIAM Journal on Control and Optimization}, 2013.

\bibitem{lit-c2d5f55c2bba}
I.~Duran and A.~S. Morgans.
\newblock {On the reflection and transmission of circumferential waves through
  nozzles}.
\newblock {\em Journal of Fluid Mechanics}, 2015.

\bibitem{Evans2010}
L.~C. Evans.
\newblock {\em Partial Differential Equations}.
\newblock American Mathematical Society, 2010.

\bibitem{GLTW2012}
M.~Gugat, G.~Leugering, S.~Tamasoiu, and K.~Wang.
\newblock {$H^2$}-stabilization of the isothermal {Euler} equations: a
  {Lyapunov} function approach.
\newblock {\em Chinese Annals of Mathematics, Series B}, 2012.

\bibitem{HS2021}
A.~Hayat and P.~Shang.
\newblock Exponential stability of density-velocity systems with boundary
  conditions and source term for the {$H^2$} norm.
\newblock {\em Journal de Math\'ematiques Pures et Appliqu\'ees}, 2021.

\bibitem{HTControl2024}
M.~Herty and F.~Thein.
\newblock Boundary feedback control for hyperbolic systems.
\newblock {\em ESAIM: Control, Optimisation and Calculus of Variations}, 2024.

\bibitem{Hormander1997}
L.~H{\"o}rmander.
\newblock {\em Lectures on Nonlinear Hyperbolic Differential Equations}.
\newblock Springer, 1997.

\bibitem{HDVK2016}
L.~Hu, F.~Di~Meglio, R.~Vazquez, and M.~Krstic.
\newblock Control of homodirectional and general heterodirectional linear
  coupled hyperbolic {PDEs}.
\newblock {\em IEEE Transactions on Automatic Control}, 2016.

\bibitem{HVDK2019}
L.~Hu, R.~Vazquez, F.~Di~Meglio, and M.~Krstic.
\newblock Boundary exponential stabilization of 1-dimensional inhomogeneous
  quasi-linear hyperbolic systems.
\newblock {\em SIAM Journal on Control and Optimization}, 2019.

\bibitem{IRS2021}
A.~Israelsson, S.~Rodr{\'i}guez-L{\'o}pez, and W.~Staubach.
\newblock Local and global estimates for hyperbolic equations in
  {Besov--Lipschitz} and {Triebel--Lizorkin} spaces.
\newblock {\em Analysis \& PDE}, 2021.

\bibitem{Li2010}
T.~Li.
\newblock {\em Controllability and Observability for Quasilinear Hyperbolic
  Systems}.
\newblock American Institute of Mathematical Sciences and Higher Education
  Press, 2010.

\bibitem{PS2020}
M.~Pokojovy and R.~Schnaubelt.
\newblock Boundary stabilization of quasilinear {Maxwell} equations.
\newblock {\em Journal of Differential Equations}, 2020.

\bibitem{PWB2008}
C.~Prieur, J.~Winkin, and G.~Bastin.
\newblock Robust boundary control of systems of conservation laws.
\newblock {\em Mathematics of Control, Signals, and Systems}, 2008.

\bibitem{Schochet1986}
S.~Schochet.
\newblock The compressible {Euler} equations in a bounded domain: Existence of
  solutions and the incompressible limit.
\newblock {\em Communications in Mathematical Physics}, 1986.

\bibitem{THM2016}
H.-G. Tang, B.-S. He, and H.-B. Mou.
\newblock {P}- and {S}-wave energy flux density vectors.
\newblock {\em Geophysics}, 2016.

\bibitem{Trakhinin2006}
Y.~Trakhinin.
\newblock Dissipative symmetrizers of hyperbolic problems and their
  applications to shock waves and characteristic discontinuities.
\newblock {\em SIAM Journal on Mathematical Analysis}, 2006.

\bibitem{WWY2020}
K.~Wang, Z.~Wang, and W.~Yao.
\newblock Boundary feedback stabilization of quasilinear hyperbolic systems
  with partially dissipative structure.
\newblock {\em Systems \& Control Letters}, 2020.

\bibitem{YY2024}
H.~Yang and W.-A. Yong.
\newblock Feedback boundary control of multi-dimensional hyperbolic systems
  with relaxation.
\newblock {\em Automatica}, 2024.

\bibitem{YY2025}
H.~Yang and W.-A. Yong.
\newblock Boundary control of multidimensional discrete-velocity kinetic
  models.
\newblock {\em IEEE Transactions on Automatic Control}, 2025.

\end{thebibliography}
\end{document}